\newcommand{\imsize}{0.5}
\begin{document}

\title{Damped second order flow applied to image denoising}

\shorttitle{A damped second order flow} 
\shortauthorlist{G. Baravdish et al.} 

\author{
\name{G. Baravdish and O. Svensson}
\address{Department of Science and Technology, Link\"{o}ping University, 58183 Link\"{o}ping, Sweden}
\name{M. Gulliksson}
\address{School of Science and Technology, \"{O}rebro University, 70182 \"{O}rebro, Sweden}
\and
\name{Y. Zhang$^*$}
\address{Shenzhen MSU-BIT University, 518172 Shenzhen, China, and School of Mathematics and Statistics, Beijing Institude of Technology, 100081 Beijing, China\email{$^*$Corresponding author: ye.zhang@oru.se}}}

\maketitle

\begin{abstract}
{In this paper, we introduce a new image denoising model: the damped flow (DF), which is a second order nonlinear evolution equation associated with a class of energy functionals of an image. The existence, uniqueness, and regularization property of DF are proven. For the numerical implementation, based on the St\"{o}rmer-Verlet method, a discrete damped flow, SV-DDF, is developed. The convergence of SV-DDF is studied as well. Several numerical experiments, as well as a comparison with other methods, are provided to demonstrate the efficiency of SV-DDF.}
{nonlinear flow; image denoising; $p$-parabolic; $p$-Laplace; inverse problems; regularization; damped Hamiltonian system; symplectic method; St\"ormer-Verlet.}
\\
2000 Math Subject Classification: 35A01, 35A02, 65P10, 65M12, 65M32
\end{abstract}

\section{Introduction.}
\label{Introduction}

Digital images play a significant role in many fields in science, industry, and daily life, such as computer tomography, magnetic resonance imaging, geographical information systems, astronomy, satellite television, etc. Data sets collected by image sensors are always contaminated by noise. Instrument precision, the absence of some acquisition channels, and interfering natural phenomena can all degrade the data information. Moreover, noise can be introduced by transmission errors, compression and artificial editing. Therefore, it is necessary to apply a denoising technique on the original noisy image before it is analyzed.

Over the last few decades, scientists have developed numerous techniques to achieve adaptive imaging denoising, such as wavelets (\cite{Donoho-1995}), stochastic approaches (\cite{Preusser-2008}), and formulations based on partial differential equations (PDEs) (\cite{Alvarez1993,Weickert-1998}). We refer to (\cite{Gonzalez2007,Scherzer2009}) for a review on various denoising methods.

An essential challenge for imaging denoising is to remove noise as much as possible without eliminating the most representative characteristics of the image, such as edges, corners and other sharp structures. Traditional denoising methods are given some information about the noise, but the problem of blind image denoising involves computing the denoised image from the noisy one without any knowledge of the noise. The energy functional approach has in recent years been very successful in blind image denoising, most often taking the form
\begin{equation}
\mathcal{E} (u) = \frac{1}{2} \int_{\Omega} (u-u_0)^2 dx + \alpha \int_{\Omega} \Phi(|\nabla u|) dx,
\label{energyEu}
\end{equation}
where $u_0(x)$ is the observed (noisy) image, and $\Omega\subset \mathbf{R}^N$ ($N=2,3$) is a bounded domain with almost everywhere $C^{2}$ smooth boundary $\partial\Omega$. The first term in (\ref{energyEu}) is a fidelity term, the second term is a regularization term, and $\alpha>0$ is the regularization parameter. The regularization term $\Phi(|\nabla u|)$ is usually assumed to be strictly convex. A well-studied case of $\mathcal{E}(u)$ is when the regularization term is the $p$-Dirichlet energy, i.e.,
$\Phi(|\nabla u|)= \frac{1}{p} |\nabla u|^p, p\geq1$. The cases $p=1$ and $p=2$ correspond to the Total Variation (TV) principle (\cite{ROF}) and the first-order Tikhonov's regularization, respectively. The general cases for $p>1$ and $p=p(x)$ have been studied in, e.g., \cite{BOA2015,Kuijper:2009}.
Nowadays, there are many relevant extensions of the TV model, e.g.,
\cite{bollt2009graduated,bredies2010total,chen2006variable}.
The extension of TV to variational tensor-based formulations was investigated in
\cite{grasmair2010anisotropic}.
Other relevant extensions of the energy functional can be found in
\cite{aastrom2017}.

The Euler-Lagrange equation $\partial \mathcal{E} / \partial u =0$ associated with the functional $\mathcal{E}(u)$ is given by
\begin{equation}\label{ELeq}
u-u_0 - \alpha \cdot \textmd{div} \left( \frac{\Phi'(|\nabla u|)}{|\nabla u|} \nabla u \right) =0  \textmd{~in~} \  \Omega, \quad \partial_n u=0 \textmd{~on~} \ \partial\Omega,
\end{equation}
where $n$ is the outward unit normal to the boundary $\partial\Omega$. For the $p$-Dirichlet energy, let $\alpha\to0$, and we obtain the first order flow
\begin{equation}\label{PDEeq2}
u_t - \Delta_{p} u =  0  \textmd{~in~} \  (0,T)\times\Omega, \quad u(x,0) = u_0(x)  \textmd{~in~} \  \Omega,
\end{equation}
where the $p$-Laplace operator is defined by $\Delta_{p} u = \textmd{div}\left( |\nabla u|^{p-2} \nabla u \right)$. The $p$-parabolic equation in (\ref{PDEeq2}) has been studied intensively in
\cite{DiBenedettoBook,Ladyzhenskaja1988,lieberman1996,roubivcek2013,wu2001}, and references therein. The edge detection property of (\ref{PDEeq2}) has been analyzed in \cite{PeronaMalik1990}, where instead of the $p$-Laplace operator, they studied a more general diffusion term $\textmd{div} \left( g(|\nabla u|)\nabla u \right)$. The main advantage of (\ref{PDEeq2}) is the so-called conditional smoothing capability: For large $\nabla u$, the diffusion will be low, and therefore the exact localization of the edges will be kept. While $\nabla u(x)$ is small, the diffusion will tend to smooth around $x$. However, in the case of small noise of $u$ with large oscillations of the gradient $\nabla u$, the conditional smoothing introduced by (\ref{PDEeq2}) will keep all noise edges. To avoid the above mentioned difficulty, the authors in \cite{Alvarez1992} proposed a selective smoothing model
\begin{equation}\label{selective}
u_t - \textmd{div} \left( g(|\nabla G_{\sigma} \star u|)\nabla u \right) = 0,
\end{equation}
where $g$ is a smooth nonnegtive nonincreasing function with $g(0)=1$ and $\lim_{r\to +\infty} g(r)=0$. In (\ref{selective}), $G_{\sigma}(x)$, $\sigma>0$, is the Gaussian kernel $G_{\sigma}(x) = \frac{1}{(2\pi\sigma)^{N/2}}e^{-\frac{|x|^{2}}{2\sigma}}$, $|\nabla G_{\sigma}\star u|^2 = \sum_{j=1}^{N}\left(\frac{\partial G_\sigma}{\partial x_{j}}\star u\right)^{2}$, and $\star$ denote the cross-correlation, namely $(f\star g)(x) = \int_{\mathbf{R}^N} \bar{f}(y) g(x+y) dy$,  $\bar{f}$ is the complex conjugate of $f$.

On the other hand, for a better edge preservation, the authors in \cite{Ratner2011,Ratner2013} introduced a Telegraph-Diffusion (TeD) model, which is described by a second order (in time) hyperbolic equation
\begin{equation}\label{TeD}
u_{tt} + \eta u_t  - \textmd{div} \left( k \nabla u \right) = 0, \quad u(x,0) = u_0(x), u_t(x,0) = 0  \textmd{~in~} \  \Omega,
\end{equation}
where $\eta$ and $k$ denote the damping and elasticity coefficients, respectively. It has been shown that the model (\ref{TeD}) enables better preservation of edges in image denoising by offering an adaptive lowpass filter, and offering slower error propagation across edges.

Inspired by imaging denosing models (\ref{selective}) and (\ref{TeD}), in this paper, we will study the following second order flow
\begin{equation}\label{SecondOrderFlow}
\left\{\begin{array}{rl}
u_{tt}+\eta u_t - \textmd{div}\left( (\varepsilon + |\nabla G_{\sigma}                                                                                                                                                                                                                                                                                                                                                                                                                                                                                                                                                                                                                                                                                                                                                                                                                                                                                                                                                                                                                                                                                                                                                                                                                                                                                                                                                                                                                                                                                                                                                                                                                                                                                                                              \star u|^{2})^{\frac{p-2}{2}} \nabla u \right) =0 & \textmd{in} \  (0,T)\times\Omega,\\
u(x,0) = u_0(x) & \textmd{in} \  \Omega,\\
u_t(x,0) = 0 & \textmd{in} \  \Omega,\\
\partial_n u=0& \textmd{on} \ \partial\Omega.
\end{array}\right.
\end{equation}
where the damping parameter $\eta >0$ is a given model parameter, and $\sigma, \varepsilon>0$ are two anther given small numbers, which are used to avoid the singularity of model  (\ref{SecondOrderFlow}).

The model (\ref{SecondOrderFlow}) can be viewed as a regularized version of the telegraphers' equation with the $p$-Laplace operator, i.e. $u_{tt}+\eta u_t -\Delta_{p} u = 0$. Denote by $V(u)=\int_{\Omega} \frac{1}{p} |\nabla u|^p dx$ the $p$-Dirichlet integral. Then, the first order flow in (\ref{PDEeq2}), i.e. $u_t+\partial_u V(u)=0$, can be considered a classical steepest descent flow for solving the optimization problem $\min_u V(u)$. In the last two decades, there has been increasing evidence found showing that second order flows also enjoy remarkable optimization properties. Among these, a particularly important dynamical system~-- $u_{tt} + \eta u_t +\partial_u V(u) =0$ -- is called the Heavy Ball with Friction system (HBF) (\cite{Attouch-2000}) because of its mechanical interpretation. This system is an asymptotic approximation of the equation describing the motion of a material point with positive mass, subjected to stay on the graph of $V(u)$, and which moves under the action of the gravity force, the reaction force, and the friction force ($\eta>0$ is the friction parameter). The introduction of the term $\ddot{u}(t)$ in the dynamical system permits it to overcome some of the drawbacks of the steepest descent method. By contrast with steepest descent methods, the HBF system is not a descent method. It is the global energy (kinetic plus potential) which decreases. The optimization properties for the HBF system have been studied in detail in~\cite{Alvarez-2000,Alvarez-2002,Attouch-2000}, and references therein. Numerical algorithms based on the HBF system of solving some special problems, e.g. large systems of linear equations, eigenvalue problems, nonlinear Schr\"{o}dinger problems, inverse source problems, general ill posed problems, etc., can be found in~\cite{Edvardsson-2012,Edvardsson-2015,Sandin-2016,ZhangYe2018,ZhangHof2018,GongHofmannZhang2019}, where we can see that a second order damped system solved by a symplectic solver is far more efficient than numerically solving a first order system. In this study, we focus on the regularity of the specific system (\ref{SecondOrderFlow}) and its denoising capability.

The remainder of the paper is structured as follows. In Section 2, we study the existence and uniqueness of PDE (\ref{SecondOrderFlow}). Section 3 briefly discusses the regularization property of the dynamical solution with (\ref{SecondOrderFlow}). Based on the St\"{o}rmer-Verlet method, a discrete damped flow, termed by SV-DDF, is proposed in Section 4, where the convergence property of SV-DDF is studied. Section 5 presents an algorithm for image denoising. Several numerical examples are presented in Section 6 to demonstrate the feasibility and efficiency of the proposed method. A comparison with other methods is provided as well. Finally, concluding remarks are given in Section 7.

\section{Well-posedness of the model: Existence and uniquness results.}

We start with a brief description of the mathematical principles and some of the definitions used in this work. We denote by $H^{k}(\Omega)$, where $k$ is a positive integer, and the set of all functions $u$ defined in $\Omega$ is such that its distributional derivatives $D^{s}=\partial^{s}u/\partial x^{s}$ of order $|s|=\sum_{i=1}^{k}s_{i}\leq k$ all belong to
$L^{2}(\Omega)$. Furthermore, $H^{k}(\Omega)$ is a Hilbert space with the norm
$
\|u\|_{H^{k}(\Omega)}=\left(\sum_{s\leq k}\int_{\Omega}|D^{s}u|^{2}\,dx\right)^{1/2}.
$
The space $L^{p}(0,T;H^{k}(\Omega))$ consists of all functions $u$ such that for almost every $t\in(0,T)$, the element
$u(t)$ belongs to $H^{k}(\Omega)$. Hence, $L^{p}(0,T;H^{k}(\Omega))$ is a normed space with the norm
$
\|u\|_{L^{p}(0,T;H^{k}(\Omega))} = \left(\int_{0}^{T} \|u\|_{H^{k}(\Omega)}^{p}\,dt\right)^{1/p},
$
where $p>1$.
We also denote by $L^{\infty}(0,T;X)$ the set of all functions $u$ such that for almost every $t\in(0,T)$ the element
$u(t)$ belongs to $X$. $L^{\infty}(0,T;X)$ is a normed space with the norm
$
\|u\|_{L^{\infty}(0,T;X)} = \textmd{inf}\{C;\ \|u(t)\|_{X}\leq C,\mbox{ a.e. on } (0,T)\}.
$
We denote by $H^{1}(\Omega)^{*}$ the dual space of $H^{1}(\Omega)$. In the following, let $C_i$ denote a constant with a different value at a different place. It does not depend on the estimated quality. Moreover, to simplify the notation, we put
\begin{equation}\label{a}
a^{\varepsilon}(\nabla G_{\sigma}\star u) = (\varepsilon + |\nabla G_{\sigma}\star u|^{2})^{\frac{p-2}{2}} , \quad  p\in[1,2],
\end{equation}
and sometimes let $u_{t}(t)=u'(t)$.

Next, we introduce the solution space $\mathcal{U}$ for the problem (\ref{SecondOrderFlow}).
\begin{definition}
We say that an element $u$ belongs to the solution space $\mathcal{U}$ for the problem (\ref{SecondOrderFlow}) if $u\in L^{\infty}(0,T;H^{1}(\Omega))$, and its derivatives $u'$ and $u''$ with respect to $t$ in the sense of distributions to the spaces $L^{\infty}(0,T;L^{2}(\Omega))$ and $L^{\infty}(0,T;H^{1}(\Omega)^{*})$ respectively.
\end{definition}
It is easily seen that $\mathcal{U}$ is a Banach space equipped with the norm
$$
\| u \|_{\mathcal{U}} = \| u \|_{L^{\infty}(0,T;H^{1}(\Omega))} + \| u' \|_{L^{\infty}(0,T;L^{2}(\Omega))}
+ \| u'' \|_{L^{\infty}(0,T;H^{1}(\Omega)^{*})}.
$$
The solutions for the problem (\ref{SecondOrderFlow}) are considered in the weak sense, as follows.
\begin{definition}
A function $u$ is called a weak solution of the problem (\ref{SecondOrderFlow}) if $u\in \mathcal{U}$ and satisfies (\ref{SecondOrderFlow}) for almost every $t\in(0,+\infty)$ with derivatives of $u$ in the sense of distributions.
\end{definition}
We will show the existence of weak solutions for problem (\ref{SecondOrderFlow}) by using the Schauder fixed point theorem; see \cite{cao,catte1992image}.
In the sequel, we need the following results for the corresponding linear problem, \cite{Evans2010Book}, namely, 
\begin{equation}\label{LinPDE}
\left\{\begin{array}{rl}
u_{tt} + \eta u_t - \textmd{div}(b(x,t)\nabla u) =0 & \textmd{in} \  (0,T)\times\Omega,\\
u(x,0) = u_0(x) & \textmd{in} \  \Omega,\\
u_t(x,0) = 0 & \textmd{in} \  \Omega\\
\partial_n u=0& \textmd{on} \ \partial\Omega.
\end{array}\right.,
\end{equation}
where $b(x,t)$ is a given function such that $b(x,t)\geq c>0$ and $c$ is a constant.

\begin{theorem}\label{LemLinPDE}
Suppose that $b(x,t)$ is bounded, and let $u_{0}\in H^{1}(\Omega)$. Then the problem in (\ref{LinPDE}) has a unique solution $u\in{\mathcal U}$.
Moreover, if $u_{0}\in H^{2}(\Omega)$, then it follows that $u_{t}\in L^{\infty}(0,T;H^{1}(\Omega))$.
\end{theorem}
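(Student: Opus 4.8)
The plan is to construct the solution by the Galerkin method, following the second-order hyperbolic theory of \cite{Evans2010Book}. First I would fix a countable family $\{w_k\}_{k\geq1}\subset H^{2}(\Omega)$ that is orthonormal in $L^{2}(\Omega)$, orthogonal and complete in $H^{1}(\Omega)$; the eigenfunctions of $-\Delta$ with homogeneous Neumann data are the natural choice, since they automatically encode the boundary condition $\partial_n u=0$. For each $m$ I seek $u_m(t)=\sum_{k=1}^{m} d_k^m(t)\,w_k$ solving the projected system
\[
(u_m'',w_k)+\eta\,(u_m',w_k)+\int_{\Omega} b(x,t)\,\nabla u_m\cdot\nabla w_k\,dx=0,\qquad k=1,\dots,m,
\]
with $u_m(0)$ the $H^{1}$-projection of $u_0$ onto $\mathrm{span}\{w_1,\dots,w_m\}$ and $u_m'(0)=0$. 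This is a linear second-order ODE system with bounded coefficients, so standard ODE theory produces a unique $u_m$ on all of $[0,T]$.

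Second, I would derive the a priori energy estimate by testing with $u_m'$ (multiply the $k$-th equation by $(d_k^m)'(t)$ and sum). This yields
\[
\frac{d}{dt}\!\left(\tfrac12\|u_m'\|_{L^2}^2+\tfrac12\!\int_{\Omega} b\,|\nabla u_m|^2\,dx\right)+\eta\|u_m'\|_{L^2}^2=\tfrac12\!\int_{\Omega} b_t\,|\nabla u_m|^2\,dx .
\]
Because $b\geq c>0$ controls $\|\nabla u_m\|_{L^2}^2$ from above by the energy, the right-hand side is dominated by a constant times the energy, so Gr\"onwall's inequality gives bounds on $\|u_m\|_{L^\infty(0,T;H^1)}$ and $\|u_m'\|_{L^\infty(0,T;L^2)}$ that are uniform in $m$. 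Reading the equation as $u_m''=-\eta u_m'+\mathrm{div}(b\nabla u_m)$ and pairing against $H^{1}(\Omega)$ test functions (using the $H^{1}$-boundedness of the projection onto the first $m$ modes) then bounds $u_m''$ uniformly in $L^\infty(0,T;H^{1}(\Omega)^{*})$.

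Third, weak-$*$ compactness lets me extract a subsequence with $u_m\rightharpoonup u$, $u_m'\rightharpoonup u'$, $u_m''\rightharpoonup u''$ in the three spaces defining $\mathcal U$. Since $b$ is a fixed coefficient and the equation is linear in $u$, passing to the limit in each term of the weak formulation is routine, and the standard argument with test functions vanishing at $t=T$ recovers $u(0)=u_0$ and $u'(0)=0$. For uniqueness I would take the difference $w$ of two solutions, which solves the homogeneous problem with zero data, and apply the same energy identity together with Gr\"onwall to force $w\equiv0$.

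Finally, for the improved regularity when $u_0\in H^{2}(\Omega)$, the key observation is that evaluating the equation at $t=0$ gives $u_{tt}(0)=\mathrm{div}(b(\cdot,0)\nabla u_0)$ (using $u_t(0)=0$), which lies in $L^{2}(\Omega)$ precisely because $u_0\in H^{2}$. I would then differentiate the Galerkin system in $t$ and carry out a second energy estimate by testing with $u_m''$, bounding $u_m'$ in $L^\infty(0,T;H^1)$ and $u_m''$ in $L^\infty(0,T;L^2)$, so that $u'\in L^\infty(0,T;H^{1}(\Omega))$ in the limit. The main obstacle throughout is the time dependence of $b$: the terms $\int b_t|\nabla u_m|^2$ and, in the second estimate, those involving $b_t\nabla u_m$ must be absorbed by Gr\"onwall, which quietly requires $b_t$ to be bounded, while the higher-order estimate additionally relies on the compatibility $\mathrm{div}(b(\cdot,0)\nabla u_0)\in L^{2}$ just established.
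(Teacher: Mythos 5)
Your proposal is correct and takes essentially the same route as the paper: the paper offers no proof of this theorem at all, saying only that it ``can be proven by the Galerkin method \cite{Evans2010Book}'', and your sketch (projected ODE system on a Neumann eigenbasis, energy estimate with Gr\"onwall, weak-$*$ compactness, and a time-differentiated second estimate for the $u_0\in H^{2}(\Omega)$ case) is precisely that standard argument, including the apt observation that boundedness of $b_t$ is tacitly needed --- which matches how the paper later controls $a^{\varepsilon}_{t}$ in Lemma~\ref{inequalitya} before invoking this theorem. The only point to implement with care is uniqueness: since $w'(t)$ lies only in $L^{2}(\Omega)$ it is not an admissible test function, so the ``same energy identity'' must be run with the auxiliary function $v(t)=\int_{t}^{s}w(\tau)\,d\tau$ as in \cite{Evans2010Book} (the same device the paper itself uses in the uniqueness part of Theorem~\ref{MainThm}), a standard fix that does not change the verdict.
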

The linear problem (\ref{LinPDE}) is by now well-studied and Theorem \ref{LemLinPDE} can be proven by the Galerkin method, see \cite{Evans2010Book}. Before providing the main result, let us consider the following lemma.


\begin{lemma}\label{inequalitya}
Assume that for all $x\in\Omega$ and a.e. $t \in (0,T)$ there exists a positive constant $C$ depending on $G_{\sigma}$ and $\Omega$ such that
\begin{eqnarray}
||\nabla G_{\sigma}\star v||_{L^{\infty}(0,T;L^{\infty}(\Omega))}
          \leq C ||u_{0}||_{H^{1}(\Omega)}, \quad
||\nabla G_{\sigma}\star v_{t}||_{L^{\infty}(0,T;L^{\infty}(\Omega))}
          \leq C ||u_{0}||_{H^{1}(\Omega)}, \quad \forall v \in \mathcal{U}.
\end{eqnarray}

Then, the following inequalities hold for $p\in [1,2]$
\begin{eqnarray}
&& (\varepsilon + C^2 ||u_{0}||^2_{H^{1}(\Omega)})^{\frac{p-2}{2}} \leq a^{\varepsilon}(\nabla G_{\sigma}\star v) \leq \varepsilon^{\frac{p-2}{2}}, \quad \forall v \in \mathcal{U} \label{Ineqa} \\
&& |a_{t}^{\varepsilon}(\nabla G_{\sigma}\star v)| \leq (2-p)N C^2 \varepsilon^{\frac{p-4}{2}} ||u_{0}||^2_{H^{1}(\Omega)}, \quad \forall v \in \mathcal{U}. \label{Ineqat}
\end{eqnarray}
\end{lemma}

\begin{proof}
Inequalities (\ref{Ineqa}) hold obviously by noting the definition of $a^{\varepsilon}$ in (\ref{a}) and the following inequalities
\begin{eqnarray*}
\varepsilon^{\frac{2-p}{2}} \leq (\varepsilon + |\nabla G_{\sigma}\star v|^{2})^{\frac{2-p}{2}} \leq (\varepsilon + C^2 ||u_{0}||^2_{H^{1}(\Omega)})^{\frac{2-p}{2}}
\end{eqnarray*}
for $p\in[1,2]$, and for all $x\in\Omega$ and a.e. $t \in (0,T)$.

Now, consider the inequality (\ref{Ineqat}). Since
\begin{eqnarray*}
a_{t}^{\varepsilon}(\nabla G_{\sigma}\star v) = \frac{p-2}{2} \left( \varepsilon + |\nabla G_{\sigma}\star v|^{2} \right)^{\frac{p-4}{2}} \sum_{j=1}^{N} 2 \left(\frac{\partial G_\sigma}{\partial x_{j}}\star v \right) \left(\frac{\partial G_\sigma}{\partial x_{j}}\star v_t \right),
\end{eqnarray*}
we can deduce that
\begin{eqnarray*}
| a_{t}^{\varepsilon}(\nabla G_{\sigma}\star v)| &=& (2-p) \left( \varepsilon + |\nabla G_{\sigma}\star v|^{2} \right)^{\frac{p-4}{2}} \cdot \left|\sum_{j=1}^{N}  \left(\frac{\partial G_\sigma}{\partial x_{j}}\star v \right) \left( \frac{\partial G_\sigma}{\partial x_{j}}\star v_t \right) \right| \\
&\leq& (2-p) \varepsilon^{\frac{p-4}{2}} \cdot N \cdot C^2 ||u_{0}||^2_{H^{1}(\Omega)},
\end{eqnarray*}
by noting the conditions of the lemma.
\end{proof}

Now, we are in a position to show our main result.

\begin{theorem}\label{MainThm}
Assume that $p\in[1,2]$ and $u_{0}\in H^{2}(\Omega)$. Then a unique weak solution to problem (\ref{SecondOrderFlow}) exists if $T$ is sufficiently small with an upper bound depending on $||u_{0}||_{H^{1}(\Omega)}$, $G_{\sigma}$ and $\Omega$.
\end{theorem}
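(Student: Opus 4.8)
The plan is to linearize (\ref{SecondOrderFlow}) and apply the Schauder fixed point theorem, with Theorem \ref{LemLinPDE} supplying the solution of each frozen-coefficient problem and Lemma \ref{inequalitya} keeping that coefficient uniformly elliptic and bounded. Given $v$ in a set $K$ to be chosen, I freeze the nonlinearity by setting $b_v(x,t) = a^\varepsilon(\nabla G_\sigma\star v)$. For $p\in[1,2]$, Lemma \ref{inequalitya} yields $0<c\leq b_v\leq\varepsilon^{(p-2)/2}$ with $c=(\varepsilon+C^2\|u_0\|_{H^1(\Omega)}^2)^{(p-2)/2}$, together with a bound on $\|\partial_t b_v\|_{L^\infty}$ depending only on $p,N,\varepsilon,C$ and $\|u_0\|_{H^1(\Omega)}$, provided $K$ is small enough that the two convolution hypotheses of the lemma hold; the latter follow from $\|\nabla G_\sigma\star v\|_{L^\infty}\leq\|\nabla G_\sigma\|_{L^2}\|v\|_{L^\infty(0,T;H^1(\Omega))}$ and the analogous bound for $v_t$. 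Thus $b_v$ is admissible in (\ref{LinPDE}), and Theorem \ref{LemLinPDE} defines a map $\mathcal T(v)=u\in\mathcal U$; since $u_0\in H^2(\Omega)$, it also gives $u_t\in L^\infty(0,T;H^1(\Omega))$, a regularity I will need both to verify the convolution hypotheses on the image and for uniqueness.

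Next I would derive the a priori estimate that makes $\mathcal T$ a self-map. Testing the linear equation with $u_t$ and using the Neumann condition gives, for $E(t)=\tfrac12\|u_t\|_{L^2}^2+\tfrac12\int_\Omega b_v|\nabla u|^2\,dx$, the identity $E'(t)=-\eta\|u_t\|_{L^2}^2+\tfrac12\int_\Omega(\partial_t b_v)|\nabla u|^2\,dx$. Since $b_v\geq c$, this yields $E'\leq(\|\partial_t b_v\|_{L^\infty}/c)E$, and Gronwall gives $E(t)\leq E(0)\,e^{(\|\partial_t b_v\|_{L^\infty}/c)t}$ with $E(0)=\tfrac12\int_\Omega b_v(\cdot,0)|\nabla u_0|^2\,dx$ because $u_t(0)=0$. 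This controls $\|u_t\|_{L^\infty(0,T;L^2)}$ and $\|\nabla u\|_{L^\infty(0,T;L^2)}$; the full $H^1$-norm follows from $u(t)=u_0+\int_0^t u_t\,ds$, and $\|u''\|_{L^\infty(0,T;H^1(\Omega)^*)}$ directly from the equation since $b_v\nabla u\in L^2$ and $u_t\in L^2$. Taking $K$ to be the closed ball of $\mathcal U$ of radius $R\sim\|u_0\|_{H^1(\Omega)}$ intersected with $\{v(0)=u_0,\ v_t(0)=0\}$, the amplification factor tends to $1$ as $T\to0$, so for $T$ small, with the claimed dependence on $\|u_0\|_{H^1(\Omega)}$, $G_\sigma$ and $\Omega$, one gets $\mathcal T(K)\subseteq K$.

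To run Schauder I would view $K$ inside $X=C([0,T];L^2(\Omega))$. The bounds $v\in L^\infty(0,T;H^1)$, $v_t\in L^\infty(0,T;L^2)$ make $K$ relatively compact in $X$ by the Aubin--Lions--Simon lemma, and $K$ is convex and closed. For continuity, if $v_n\to v$ in $X$ then $\nabla G_\sigma\star v_n\to\nabla G_\sigma\star v$ uniformly, since convolution with the fixed smooth kernel sends $L^2$ into $C(\overline\Omega)$, whence $b_{v_n}\to b_v$ uniformly by the local Lipschitz continuity of $a^\varepsilon$; an energy estimate on $w_n=\mathcal T(v_n)-\mathcal T(v)$, which solves the linear equation with coefficient $b_{v_n}$ and forcing $\operatorname{div}((b_{v_n}-b_v)\nabla\mathcal T(v))$, then bounds $\|w_n\|_X$ by a multiple of $\|b_{v_n}-b_v\|_{L^\infty}\to0$. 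Schauder's theorem then provides a fixed point $u=\mathcal T(u)$, which is a weak solution. Uniqueness follows by the same energy method applied to $w=u_1-u_2$, which satisfies $w_{tt}+\eta w_t-\operatorname{div}(a_1\nabla w)=\operatorname{div}((a_1-a_2)\nabla u_2)$ with zero data; the pointwise Lipschitz bound $\|a_1-a_2\|_{L^\infty}\leq C\|w\|_{L^2}$ together with $\nabla u_{2,t}\in L^\infty(0,T;L^2)$ (here $u_0\in H^2$ is essential) lets me integrate the forcing by parts in time and close a Gronwall inequality with $E_w(0)=0$, forcing $w\equiv0$.

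The hard part is the self-mapping estimate of the second paragraph. The ellipticity constant degenerates like $c\sim\varepsilon^{(p-2)/2}$ while the bound on $\|\partial_t b_v\|_{L^\infty}$ grows like $\varepsilon^{(p-4)/2}\|u_0\|_{H^1(\Omega)}^2$, so the Gronwall exponent $\|\partial_t b_v\|_{L^\infty}/c$ is large; it is exactly the smallness of the time horizon $T$ that keeps $e^{(\|\partial_t b_v\|_{L^\infty}/c)T}$ below the threshold needed for $\mathcal T(K)\subseteq K$. The accompanying bookkeeping --- choosing one radius $R$ and one horizon $T$ for which $K$ simultaneously satisfies the convolution hypotheses of Lemma \ref{inequalitya} and is preserved by $\mathcal T$ --- is the delicate point on which the whole fixed-point scheme rests.
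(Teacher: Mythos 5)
Your overall scheme --- freeze the coefficient $a^{\varepsilon}(\nabla G_{\sigma}\star v)$, solve the linear problem via Theorem \ref{LemLinPDE}, control the coefficient by Lemma \ref{inequalitya}, and close with Schauder --- is exactly the paper's existence proof. Your use of the strong topology $C([0,T];L^{2}(\Omega))$ with Aubin--Lions compactness, in place of the paper's weakly compact set $\mathcal{U}_{0}$ and weakly continuous map $P$, is an acceptable variant, and your energy-functional Gr\"onwall estimate is a cleaner packaging of the paper's ``Case 1/Case 2'' computations. One bookkeeping slip there: the amplification factor does \emph{not} tend to $1$ as $T\to 0$, because $E(0)\leq\frac{1}{2}\varepsilon^{\frac{p-2}{2}}\|\nabla u_{0}\|_{L^{2}(\Omega)}^{2}$ while $E(t)$ controls $\|\nabla u\|_{L^{2}(\Omega)}^{2}$ only through the lower ellipticity bound $c=(\varepsilon+C^{2}R^{2})^{\frac{p-2}{2}}$, so the ratio $(\varepsilon^{\frac{p-2}{2}}/c)^{1/2}>1$ survives the limit $T\to0$ whenever $p<2$. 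The self-map still closes, because this ratio grows only like $R^{\frac{2-p}{2}}$ in the radius $R$, so one first chooses $R=C_{1}\|u_{0}\|_{H^{1}(\Omega)}$ with $C_{1}$ large and then $T$ small; but that is the argument you need to make, not ``amplification $\to1$.''

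The genuine gap is in uniqueness. You propose ``the same energy method applied to $w=u_{1}-u_{2}$,'' i.e.\ testing the difference equation with $w_{t}$. For weak solutions in $\mathcal{U}$ one only has $w_{t}\in L^{\infty}(0,T;L^{2}(\Omega))$, so neither the pairing $\langle w_{tt},w_{t}\rangle$ nor the term $\int_{\Omega}a_{1}\nabla w\cdot\nabla w_{t}\,dx$ is defined: $w_{t}(t)$ is not an admissible test function. This is the classical obstruction for second-order hyperbolic equations, and it is exactly why the paper instead tests with $v_{1}-v_{2}$, where $v_{k}(t)=\int_{t}^{s}u_{k}(\tau)\,d\tau$, a function lying in $H^{1}(\Omega)$ for every $t$ and requiring no regularity beyond membership in $\mathcal{U}$. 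Your route can be repaired, but only by a bootstrap you never state: any weak solution $u_{i}$ of (\ref{SecondOrderFlow}) solves the linear problem (\ref{LinPDE}) with the bounded coefficient $b=a^{\varepsilon}(\nabla G_{\sigma}\star u_{i})$, hence by the uniqueness part and the $H^{2}$-regularity statement of Theorem \ref{LemLinPDE} it satisfies $u_{i,t}\in L^{\infty}(0,T;H^{1}(\Omega))$; this is needed for \emph{both} $u_{1}$ and $u_{2}$, whereas your parenthetical invokes $\nabla u_{2,t}\in L^{\infty}(0,T;L^{2}(\Omega))$ only, and only to integrate the forcing by parts. With that regularity established for both solutions your Gr\"onwall argument closes; without it, the step fails as written.
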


\begin{proof} {\bf Existence.} We use the Schauder fixed point theory, \cite{cao,catte1992image}, to prove the existence.
Let $v\in \mathcal{U}$ be such that
\begin{equation}\label{eq100}
||v||_{L^{\infty}(0,T;L^{2}(\Omega))} + ||v_{t}||_{L^{\infty}(0,T;L^{2}(\Omega))} \leq C_{1} ||u_{0}||_{H^{1}(\Omega)}
\end{equation}
where the positive constant $C_{1}$ will be determined later. Then the elements $\nabla G_{\sigma}\star v$ and $\nabla G_{\sigma} \star v_{t}$ belong to $L^{\infty}(0,T;C^{\infty}(\Omega))$, and
for all $x\in\Omega$ and a.e. $t \in (0,T)$
a positive constant $C_{2}$ depending on $G_{\sigma}$ and $\Omega$ exists such that
\begin{eqnarray*}
||\nabla G_{\sigma}\star v||_{L^{\infty}(0,T;L^{\infty}(\Omega))}
          \leq C_{1}C_{2} ||u_{0}||_{H^{1}(\Omega)}, \quad
||\nabla G_{\sigma}\star v_{t}||_{L^{\infty}(0,T;L^{\infty}(\Omega))}
          \leq C_{1}C_{2} ||u_{0}||_{H^{1}(\Omega)}.
\end{eqnarray*}
By Lemma \ref{inequalitya}, for all $x\in\Omega$ and a.e. $t$ in $(0,T)$, it follows that
\begin{eqnarray}
&& (\varepsilon +  C^2_{1}C^2_{2} ||u_{0}||^2_{H^{1}(\Omega)})^{\frac{p-2}{2}} \leq a^{\varepsilon}(\nabla G_{\sigma}\star v) \leq \varepsilon^{\frac{p-2}{2}}, \label{Ineqa2} \\
&& |a_{t}^{\varepsilon}(\nabla G_{\sigma}\star v)| \leq (2-p)N ( C_{1}C_{2})^2 \varepsilon^{\frac{p-4}{2}} ||u_{0}||^2_{H^{1}(\Omega)} . \label{Ineqat2}
\end{eqnarray}

Let $v\in \mathcal{U}$ satisfy (\ref{eq100}) and consider the problem $P_{v}$:
\begin{equation}\label{eq105}
\langle u_{tt},\varphi \rangle_{H^{1}(\Omega)^{*} \times H^{1}(\Omega)} +
\int_{\Omega} \left(\eta u_{t}\varphi+  a^{\varepsilon}(\nabla G_{\sigma}\star v)\nabla u \cdot \nabla\varphi\right)\,dx=0
\end{equation}
for every element $\varphi\in H^{1}(\Omega)$, a.e. $t$ in $(0,T)$.
The linear problem $P_{v}$ in (\ref{eq105}) is well-posed, \cite{Evans2010Book}, and has a solution $u_{v}$ which satisfies
\begin{equation}\label{EstimateUv}
||u_{v}||_{H^{1}(\Omega)} \leq C_{3} ||u_{0}||_{H^{1}(\Omega)},
\end{equation}
where $C_{3}$ is a positive constant, depending only on the domain $\Omega$.

Now, let us consider two cases when $\varphi=u_{v}'$ and $\phi = u_{\nu}$.

{\bf Case 1.} If $\varphi=u_{v}'$, we have
$$
\langle u_{v}'',u_{v}' \rangle_{H^{1}(\Omega)^{*} \times H^{1}(\Omega)} +
\int_{\Omega}  \eta (u_{v}')^{2}  \,dx+
\int_{\Omega} a^{\varepsilon}(\nabla G_{\sigma}\star v)\nabla u_{v} \cdot \nabla u_{v}'  \,dx =0,
$$
which gives
$$
 \frac{1}{2}  \frac{d}{dt}\langle u_{v}',u_{v}' \rangle_{H^{1}(\Omega) \times H^{1}(\Omega)}
 + \eta  \int_{\Omega} (u_{v}')^{2}  \,dx +
\int_{\Omega} a^{\varepsilon}(\nabla G_{\sigma}\star v)\nabla u_{v} \cdot \nabla u_{v}'\,dx =0,
$$
or equivalently
$$
\frac{d}{dt} ||u_{v}'||^{2}_{L^{2}(\Omega)} =
    - \frac{d}{dt}||\nabla u_{v}'||^{2}_{L^{2}(\Omega)}
-2\eta ||u_{v}'||^{2}_{L^{2}(\Omega)}
-2\int_{\Omega} a^{\varepsilon}(\nabla G_{\sigma}\star v)\nabla u_{v} \cdot \nabla u_{v}'\,dx
$$
Denote by $I_{1}=-2\int_{\Omega} a^{\varepsilon}(\nabla G_{\sigma}\star v)\nabla u_{v} \cdot \nabla u_{v}'\,dx$ and, integrating the above equation, we obtain
$$
||u_{v}'(t)||^{2}_{L^{2}(\Omega)} = -||\nabla u_{v}'(t)||^{2}_{L^{2}(\Omega)}
-2\eta \int_{0}^{t}||u_{v}'||^{2}_{L^{2}(\Omega)}\,d\tau
+ \int_{0}^{t}I_{1}\,d\tau,
$$
where we have used that $u_{vt}(x,0)=0$ and $\nabla u_{vt}(x,0)=0$. Hence,
\begin{equation}\label{eq110}
||u_{v}'(t)||^{2}_{L^{2}(\Omega)} \leq -2\eta \int_{0}^{t}||u_{v}'||^{2}_{L^{2}(\Omega)}\,d\tau
+ \int_{0}^{t}I_{1}\,d\tau
\end{equation}

On the other hand, by the inequalities (\ref{Ineqa2}), (\ref{Ineqat2}) and (\ref{EstimateUv}), we have
\begin{eqnarray}\label{Ineq1}
&&\int_{0}^{t}\int_{\Omega}a_{t}^{\varepsilon}(\nabla G_{\sigma}\star v)\nabla u_{v} \cdot \nabla u_{v}\,dx\,d\tau  \leq (2-p)N ( C_{1}C_{2})^2 \varepsilon^{\frac{p-4}{2}} ||u_{0}||^2_{H^{1}(\Omega)} T ||\nabla u_{v}||^{2}_{L^{2}(\Omega)} \nonumber \\ && \quad \leq (2-p)NT( C_{1}C_{2})^2 \varepsilon^{\frac{p-4}{2}} ||u_{0}||^2_{H^{1}(\Omega)} ||u_{v}||^{2}_{H^{1}(\Omega)}
\leq (2-p)NT( C_{1}C_{2}C_{3})^2 \varepsilon^{\frac{p-4}{2}} ||u_{0}||^4_{H^{1}(\Omega)}
\end{eqnarray}
and
\begin{eqnarray}\label{Ineq2}
&& -\int_{0}^{t}\int_{\Omega}\frac{d}{dt} [a^{\varepsilon}(\nabla G_{\sigma}\star v)\nabla u_{v} \cdot \nabla u_{v}]\,dx\,dt \nonumber \\ && \qquad =
[a^{\varepsilon}(\nabla G_{\sigma}\star v)\nabla u_{v} \cdot \nabla u_{v}]_{\tau=0}
-[a^{\varepsilon}(\nabla G_{\sigma}\star v)\nabla u_{v} \cdot \nabla u_{v}]_{\tau=t} \leq 2 \varepsilon^{\frac{p-2}{2}} C^2_{3} ||u_{0}||^{2}_{H^{1}(\Omega)}.
\end{eqnarray}

Since
\begin{eqnarray*}
I_{1} =  -2\int_{\Omega} a^{\varepsilon}(\nabla G_{\sigma}\star v)\nabla u_{v} \cdot \nabla u_{v}'\,dx = \int_{\Omega}a_{t}^{\varepsilon}(\nabla G_{\sigma}\star v)\nabla u_{v} \cdot \nabla u_{v}\,dx
-\int_{\Omega}\frac{d}{dt} [a^{\varepsilon}(\nabla G_{\sigma}\star v)\nabla u_{v} \cdot \nabla u_{v}]\,dx,
\end{eqnarray*}
by combining (\ref{Ineq1}) and (\ref{Ineq2}), one can deduce that
\begin{equation}\label{Ineq3}
\int_{0}^{t}I_{1}\,d\tau \leq (2-p)NT( C_{1}C_{2}C_{3})^2 \varepsilon^{\frac{p-4}{2}} ||u_{0}||^4_{H^{1}(\Omega)} + 2 \varepsilon^{\frac{p-2}{2}} C^2_{3} ||u_{0}||^{2}_{H^{1}(\Omega)}.
\end{equation}

Furthermore, using Gr\"onwall's lemma in (\ref{eq110}), we get
\begin{equation}\label{eq115}
||u_{v}'(t)||^{2}_{L^{2}(\Omega)} \leq \int_{0}^{t}I_{1}\,d\tau +
\int_{0}^{t}\left[\left( \int_{0}^{\tau}I_{1}\,ds\right)(-2\eta)e^{\int_{0}^{\tau}(-2\eta)\,ds}\right]\,d\tau .
\end{equation}
The second term in the right-hand side of the above inequality can be rewritten as
\begin{eqnarray*}
2\eta \int_{0}^{t}\left[ \int_{0}^{\tau} \left(
\int_{\Omega}\frac{d}{dt} [a^{\varepsilon}(\nabla G_{\sigma}\star v)\nabla u_{v} \cdot \nabla u_{v}]\,dx
-\int_{\Omega}a_{t}^{\varepsilon}(\nabla G_{\sigma}\star v)\nabla u_{v} \cdot \nabla u_{v}\,dx
\right)\,ds\right] \cdot e^{-2\eta\tau}\,d\tau = J_{1}+J_{2}.
\end{eqnarray*}

It follows that
\begin{eqnarray*}
|J_{1}| &=&2\eta \int_{0}^{t}\left[
\int_{\Omega} a^{\varepsilon}(\nabla G_{\sigma}\star v)\nabla u_{v} \cdot \nabla u_{v}\,dx\right]_{s=0}^{s=\tau}e^{-2\eta\tau}\,d\tau \\
&\leq&
4\eta \int_{0}^{t}\left[ \varepsilon^{\frac{p-2}{2}} C^{2}_{3} ||u_{0}||^{2}_{H^{1}(\Omega)} \right] e^{-2\eta\tau}\,d\tau \leq
2 \varepsilon^{\frac{p-2}{2}} C^{2}_{3} ||u_{0}||^{2}_{H^{1}(\Omega)}
\end{eqnarray*}
by noting inequality (\ref{Ineqa2}). Similarly, by (\ref{Ineqat2}) and (\ref{EstimateUv}), one can deduce that
\begin{eqnarray*}
|J_{2}| &=&  2\eta \int_{0}^{t}\left[ \int_{0}^{\tau} \left(\int_{\Omega}a_{t}^{\varepsilon}(\nabla G_{\sigma}\star v)\nabla u_{v} \cdot \nabla u_{v}\,dx
\right)\,ds\right]e^{-2\eta\tau}\,d\tau \\
&\leq&  2\eta \int_{0}^{t}\left[ \tau (2-p)N ( C_{1}C_{2})^2 \varepsilon^{\frac{p-4}{2}} ||u_{0}||^2_{H^{1}(\Omega)} ||\nabla u_{v}||^2_{L^{2}(\Omega)} \right]e^{-2\eta\tau}\,d\tau \\
&\leq&  2(2-p)N \eta ( C_{1}C_{2}C_{3} )^2 \varepsilon^{\frac{p-4}{2}} ||u_{0}||^4_{H^{1}(\Omega)} \int_{0}^{t}\tau e^{-2\eta\tau}\,d\tau \\
&\leq& 2(2-p)N \eta ( C_{1}C_{2}C_{3} )^2 \varepsilon^{\frac{p-4}{2}} ||u_{0}||^4_{H^{1}(\Omega)}.
\end{eqnarray*}

Putting the above two inequalities for $|J_i|$, $i=1,2$, and the inequality (\ref{Ineq3}) in the estimate  (\ref{eq115}), we derive
\begin{eqnarray*}
||u_{v}'(t)||^{2}_{L^{2}(\Omega)} &\leq&
(2-p)NT( C_{1}C_{2}C_{3} )^2 \varepsilon^{\frac{p-4}{2}} ||u_{0}||^4_{H^{1}(\Omega)} + 2 \varepsilon^{\frac{p-2}{2}} C^2_{3} ||u_{0}||^{2}_{H^{1}(\Omega)} \\
&& + 2 \varepsilon^{\frac{p-2}{2}} C^2_{3} ||u_{0}||^{2}_{H^{1}(\Omega)} + 2(2-p)N \eta ( C_{1}C_{2}C_{3} )^2 \varepsilon^{\frac{p-4}{2}} ||u_{0}||^4_{H^{1}(\Omega)}.
\end{eqnarray*}

Finally, denote by
\begin{eqnarray}\label{eq125}
&& C= 4 \varepsilon^{\frac{p-2}{2}} C^2_{3} + (2-p)N( C_{1}C_{2}C_{3})^2 \varepsilon^{\frac{p-4}{2}} ||u_{0}||^2_{H^{1}(\Omega)} (T + 2\eta),
\end{eqnarray}
and we obtain
\begin{equation}\label{eq120}
||u_{v}'||^{2}_{L^{2}(\Omega)} \leq C ||u_{0}||^{2}_{H^{1}(\Omega)}.
\end{equation}

{\bf Case 2.} If $\varphi=u_{v}$ in (\ref{eq105}), we have
$$
\langle u_{v}'',u_{v} \rangle_{H^{1}(\Omega)^{*}\times H^{1}(\Omega)}+
\int_{\Omega}  \eta u_{v}' u_{v}  \,dx+
\int_{\Omega} a^{\varepsilon}(\nabla G_{\sigma}\star v)\nabla u_{v} \cdot \nabla u_{v}  \,dx =0,
$$
which is equivalent to
$$
\frac{1}{2}\frac{d^{2}}{dt^{2}} ||u_{v}||^{2}_{H^{1}(\Omega)}
- ||u_{v}'||^{2}_{L^{2}(\Omega)}
+ \frac{1}{2}\eta \int_{\Omega} \frac{d}{dt}|u_{v}|^{2}\,dx
+ \int_{\Omega} a^{\varepsilon}(\nabla G_{\sigma}\star v)\nabla u_{v} \cdot \nabla u_{v}\,dx
=0
$$
by noting
$$
\langle u_{v}'',u_{v} \rangle_{H^{1}(\Omega)^{*} \times H^{1}(\Omega)}
=
\frac{1}{2}\frac{d^{2}}{dt^{2}} ||u_{v}||^{2}_{H^{1}(\Omega)}
- ||u_{v}'||^{2}_{L^{2}(\Omega)}.
$$
Denote $I_{2}=-2\int_{\Omega} a^{\varepsilon}(\nabla G_{\sigma}\star v)\nabla u_{v} \cdot \nabla u_{v}\,dx$, and we obtain
$$
\frac{d^{2}}{dt^{2}} ||u_{v}||^{2}_{H^{1}(\Omega)}
= 2||u_{v}'||^{2}_{L^{2}(\Omega)}
- \eta \frac{d}{dt}||u_{v}||^{2}_{L^{2}(\Omega)}
+ I_{2}
$$
We integrate the above identity twice and get
\begin{eqnarray}\label{Equv}
||u_{v}||^{2}_{H^{1}(\Omega)}
=
2\int_{0}^{t}\int_{0}^{\tau}||u_{v}'||^{2}_{L^{2}(\Omega)}\,ds\,d\tau
- \eta \int_{0}^{t} (||u_{v}(\tau)||^{2}_{L^{2}(\Omega)} - ||u_{v}(0)||^{2}_{L^{2}(\Omega)} ) \,d\tau + \int_{0}^{t}\int_{0}^{\tau} I_{2}\,ds\,d\tau
\end{eqnarray}
or equivalently
\begin{eqnarray}\label{Equv2}
&& ||u_{v}||^{2}_{L^{2}(\Omega)}
= - ||\nabla u_{v}||^{2}_{L^{2}(\Omega)} \nonumber \\
&& \quad
+ 2\int_{0}^{t}\int_{0}^{\tau}||u_{v}'||^{2}_{L^{2}(\Omega)}\,ds\,d\tau- \eta \int_{0}^{t} ||u_{v}(\tau)||^{2}_{L^{2}(\Omega)} \,d\tau
+ t\eta ||u_0||^{2}_{L^{2}(\Omega)}
+ \int_{0}^{t}\int_{0}^{\tau} I_{2}\,ds\,d\tau .
\end{eqnarray}

By inequalities (\ref{Ineqa2}) and (\ref{EstimateUv}), we have
$
|I_{2}|= 2\int_{\Omega} a^{\varepsilon}(\nabla G_{\sigma}\star v)\nabla u_{v} \cdot \nabla u_{v}\,dx
\leq
2 \varepsilon^{\frac{p-2}{2}} C^2_{3} ||u_{0}||^{2}_{H^{1}(\Omega)}.
$
Furthermore, it follows by (\ref{eq120}) that
\begin{equation}\label{EstimateUvprime}
\sup_{0\leq t\leq T}||u_{v}'(t)||^{2}_{L^{2}(\Omega)} \leq C ||u_{0}||^{2}_{H^{1}(\Omega)}.
\end{equation}
Hence, by ignoring the non-positive terms in the right-hand side of (\ref{Equv2}) we obtain
\begin{eqnarray}\label{Inequ}
||u_{v}||^{2}_{L^{2}(\Omega)} &\leq&
 2T^{2}C ||u_{0}||^{2}_{H^{1}(\Omega)} + \eta T ||u_{0}||^{2}_{H^{1}(\Omega)} +
2 \varepsilon^{\frac{p-2}{2}} C^2_{3} T^2 ||u_{0}||^{2}_{H^{1}(\Omega)} \nonumber \\
 &=&
 T \left( 2 TC  + \eta   +
2 \varepsilon^{\frac{p-2}{2}} C^2_{3} T \right) ||u_{0}||^{2}_{H^{1}(\Omega)}.
\end{eqnarray}

Inserting $u_{v}$ into (\ref{eq100}) to obtain
\begin{equation}\label{eq205}
||u_{v}||_{L^{\infty}(0,T;L^{2}(\Omega))} + ||u_{vt}||_{L^{\infty}(0,T;L^{2}(\Omega))} \leq
   C_{1} ||u_{0}||_{H^{1}(\Omega)}.
\end{equation}

By combining (\ref{eq120}), (\ref{Inequ}) and (\ref{eq205}), we obtain that
\begin{equation}\label{eq205}
||u_{v}||_{L^{\infty}(0,T;L^{2}(\Omega))} + ||u_{vt}||_{L^{\infty}(0,T;L^{2}(\Omega))} \leq
   C_{1} ||u_{0}||_{H^{1}(\Omega)}
\end{equation}
provided that
$$
\sqrt{2 T^{2} C  + \eta T  + 2 \varepsilon^{\frac{p-2}{2}} C^2_{3} T^{2} } + \sqrt{C} \leq C_{1}.
$$

Hence, it is sufficient to show that there exists $C_{1}>0$ such that
\begin{eqnarray*}\label{C1requirment}
2T^{2}C  + \eta T  + 2 \varepsilon^{\frac{p-2}{2}} C^2_{3} T^{2} + C \leq C^2_{1},
\end{eqnarray*}
which is equivalent to
\begin{eqnarray*}
\eta T + 2 (5 T^2 + 2) \varepsilon^{\frac{p-2}{2}} C^2_{3} \leq \left[ 1 - (2-p)N(C_{2}C_{3})^2 \varepsilon^{\frac{p-4}{2}} ||u_{0}||^2_{H^{1}(\Omega)} (2T^2 +1) (T + 2\eta ) \right] C^2_1
\end{eqnarray*}
by noting the definition of $C$ in (\ref{eq125}). Hence, we have to require that
\begin{eqnarray*}
(2-p)N(C_{2}C_{3})^2 \varepsilon^{\frac{p-4}{2}} ||u_{0}||^2_{H^{1}(\Omega)} (2T^2 +1) (T + 2\eta ) < 1.
\end{eqnarray*}
This is easily fulfilled if we choose
\begin{eqnarray*}
T < \frac{1}{\sqrt[3]{2(2-p)N(C_{2}C_{3})^2} \varepsilon^{\frac{p-4}{6}} ||u_{0}||^{2/3}_{H^{1}(\Omega)}}.
\end{eqnarray*}

On the other hand, by equation (\ref{Equv}) we obtain
\begin{eqnarray*}
||\nabla u_{v}||^{2}_{L^{2}(\Omega)}
&=&
- ||u_{v}||^{2}_{L^{2}(\Omega)}
+ 2\int_{0}^{t}\int_{0}^{\tau}||u_{v}'||^{2}_{L^{2}(\Omega)}\,ds\,d\tau \\
&& - \eta \int_{0}^{t} ||u_{v}(\tau)||^{2}_{L^{2}(\Omega)} \,d\tau
+ t\eta ||u_{v}(0)||^{2}_{L^{2}(\Omega)}
+ \int_{0}^{t}\int_{0}^{\tau} I_{2}\,ds\,d\tau.
\end{eqnarray*}
Since $ I_{2}=-2\int_{\Omega} a^{\varepsilon}(\nabla G_{\sigma}\star v)\nabla u_{v} \cdot \nabla u_{v}\,dx
\leq -2 \varepsilon ||\nabla u_{v}||^{2}_{L^{2}(\Omega)} \leq 0$, using inequalities (\ref{EstimateUvprime}) and (\ref{Inequ}), we can deduce that
\begin{eqnarray}\label{IneqduvL2}
 ||\nabla u_{v}||^{2}_{L^{2}(\Omega)} &\leq&
2\int_{0}^{t}\int_{0}^{\tau}||u_{v}'||^{2}_{L^{2}(\Omega)}\,ds\,d\tau
+ t\eta ||u_{v}(0)||^{2}_{L^{2}(\Omega)} \nonumber \\ &\leq& 2T^2 C ||u_{0}||^{2}_{H^{1}(\Omega)} + \eta T \left( 2T^{2}C  + \eta T  +
2 \varepsilon^{\frac{p-2}{2}} C^2_{3} T^{2} \right) ||u_{0}||^{2}_{H^{1}(\Omega)} \nonumber \\ &=& \left( 2(1+\eta T^2) T^{2}C  + \eta^2 T^2  + 2 \varepsilon^{\frac{p-2}{2}} \eta T^3 C^2_{3} \right) ||u_{0}||^{2}_{H^{1}(\Omega)}
\end{eqnarray}
a.e. $t \in (0,T)$. Define $C_{4}= \sqrt{2(1+\eta T^2) T^{2}C  + \eta^2 T^2  + 2 \varepsilon^{\frac{p-2}{2}} \eta T^3 C^2_{3}} \cdot ||u_{0}||_{H^{1}(\Omega)}$, and note that $C_{4}$ is independent on $t$, and we obtain
\begin{equation}\label{eq230a}
||\nabla u_{v}||_{L^{\infty}(0,T;L^{2}(\Omega))} \leq C_{4} .
\end{equation}

Now, let $\varphi$ in (\ref{eq105}) such that $||\varphi||_{H^{1}(\Omega)} = 1$, and we obtain
\begin{eqnarray*}
&&\langle u_{v}'', \varphi \rangle_{H^{1}(\Omega)^{*} \times H^{1}(\Omega)} =
- \eta \int_{\Omega}   u_{v}' \varphi \,dx -
\int_{\Omega} a^{\varepsilon}(\nabla G_{\sigma}\star v)\nabla u_{v} \cdot \nabla \varphi  \,dx \\
&&  \leq - \eta \int_{\Omega}   u_{v}' \varphi \,dx -
       (\varepsilon + C^2 ||u_{0}||^2_{H^{1}(\Omega)})^{\frac{p-2}{2}} \int_{\Omega} \nabla u_{v} \cdot \nabla \varphi  \,dx \\
&&  \leq \eta ||u_{v}'||_{L^{2}(\Omega)} ||\varphi||_{L^2(\Omega)}
 + (\varepsilon + C^2 ||u_{0}||^2_{H^{1}(\Omega)})^{\frac{p-2}{2}} ||\nabla u_{v}||_{L^{2}(\Omega)} ||\nabla \varphi||_{L^2(\Omega)} \\
&&  \leq ||u_{0}||^{2}_{H^{1}(\Omega)} \cdot \Big( \eta C  +  (\varepsilon + C^2 ||u_{0}||^2_{H^{1}(\Omega)})^{\frac{p-2}{2}} \cdot \left( 2(1+\eta T^2) T^{2}C  + \eta^2 T^2  + 2 \varepsilon^{\frac{p-2}{2}} \eta T^3 C^2_{3} \right) \Big) =: C_5.
\end{eqnarray*}
by noting inequalities (\ref{eq120}) and (\ref{IneqduvL2}). This implies that
$$
||u_{v}''||_{H^{1}(\Omega)^{*}} =
\sup_{||\varphi||_{H^{1}(\Omega)} =1} \langle u_{v}'', \varphi \rangle_{H^{1}(\Omega)^{*} \times H^{1}(\Omega)}  \leq C_{5}.
$$
Since constant $C_{5}$ is independent on $t$, we obtain
\begin{equation}\label{eq230b}
||u_{v}''||_{L^{\infty}(0,T;H^{1}(\Omega)^{*})} \leq  C_{5}.
\end{equation}

From (\ref{eq205}), (\ref{eq230a}) and (\ref{eq230b}), we introduce the subspace $\mathcal{U}_{0}$ of $\mathcal{U}$ defined by
\begin{eqnarray*}
&& \mathcal{U}_{0}=\left\{
v\in\mathcal{U}; v~\mbox{satisfies (\ref{SecondOrderFlow}) in the sense of distribution},\right.\\
&& ||v||_{L^{\infty}(0,T;L^{2}(\Omega))} + ||v'||_{L^{\infty}(0,T;L^{2}(\Omega))} \leq
   C_{1} ||u_{0}||_{H^{1}(\Omega)}, ~ ||\nabla u_{v}||_{L^{\infty}(0,T;L^{2}(\Omega))} \leq C_{4}, ~
||u_{v}''||_{L^{\infty}(0,T;H^{1}(\Omega)^{*})} \leq C_{5} \}
\end{eqnarray*}
It follows by construction that $P: v\rightarrow u_{v}$ is a mapping from $\mathcal{U}_{0}$ to $\mathcal{U}_{0}$.
Furthermore, it can be shown that $\mathcal{U}_{0}$ is a nonempty, convex and weakly compact subset of
$\mathcal{U}$. We want to use Schauder's fixed point theorem and need to prove that $P: v\rightarrow u_{v}$, with a weakly continuous mapping from $\mathcal{U}_{0}$ to $\mathcal{U}_{0}$.
Let $v_{j}$ be a sequence that converges weakly to some $v$ in $\mathcal{U}_{0}$ and let $u_{j}=u_{v_{j}}$.
We have to prove that $u_{j}=P(v_{j})$ converges weakly to $u_{v}=P(v)$.
From (\ref{eq230a}) and (\ref{eq230b}), classical results of compact inclusion in Sobolev spaces, \cite{adams2003sobolev},
we can select from $v_{j}$ and $u_{j}$, respectively, a subsequence such that for some $u$, we have
\begin{itemize}
\item $v_{j}\rightarrow v$ in $L^{2}(0,T;L^{2}(\Omega))$ and a.e. on $\Omega\times(0,T)$,
\item $\partial_{x_{k}}G_{\sigma} \star v_{j} \rightarrow \partial_{x_{k}}G_{\sigma} \star v$ in
$L^{2}(0,T;L^{2}(\Omega))$ and a.e. on $\Omega\times(0,T)$, $k=1,2,\ldots, N$,
\item $a^{\varepsilon}(|\partial_{x_{k}}G_{\sigma} \star v_{j}|) \rightarrow
           a^{\varepsilon}(|\partial_{x_{k}}G_{\sigma} \star v|)$
           in $L^{2}(0,T;L^{2}(\Omega)$ and a.e. on $\Omega\times(0,T)$,
\item $u_{j}\rightarrow u$ weakly $*$ in $L^{\infty}(0,T;H^{1}(\Omega))$,
\item $u_{j}'\rightarrow u'$ weakly $*$ in $L^{\infty}(0,T;L^{2}(\Omega))$,
\item $u_{j}''\rightarrow u''$ weakly $*$ in $L^{2}(0,T;H^{1}(\Omega)^*)$,
\item $u_{j}\rightarrow u$ in $L^{2}(0,T;L^{2}(\Omega))$ and a.e. on $\Omega\times(0,T)$,
\item $\frac{\partial u_{j}}{\partial x_{k}}\rightarrow \frac{\partial u}{\partial x_{k}}$ weakly
          $*$ in $L^{\infty}(0,T;L^{2}(\Omega))$, $k=1,2,\ldots, N$,
\item $u_{j}(0) \rightarrow u_{0}$ in $L^{2}(\Omega)$,
\item $u_{j}'(0)\rightarrow 0$ in $H^{1}(\Omega)^{*}$.
\end{itemize}
Hence, we can define $u=P(v)$ as the limit in the problem $P_{v_{j}}$.
Moreover, $P$ is weakly continuous, since the
sequence $u_{j}=P(v_{j})$ converges weakly in
$\mathcal{U}_{0}$ to a unique element $u=P(v)$.
By  the Schauder fixed point theorem, there exists $v\in\mathcal{U}_{0}$ such that $v=P(v)=u_{v}$ showing that the element $u_{v}$ solves the problem (\ref{SecondOrderFlow}).

\vspace{0.2mm}


{\bf Uniqueness.} We proceed as in \cite{Evans2010Book,cao}. Let $u_{1}$ and $u_{2}$ be two weak solutions of
(\ref{SecondOrderFlow}). Denote by $a_i = a^{\varepsilon}(\nabla G_{\sigma}\star u_{i})$, where $a^{\varepsilon}(\cdot)$ is defined in (\ref{a}). Then for a.e. $t\in(0,T)$ we obtain
\begin{equation}\label{eq300}
(u_{1}-u_{2})''+\eta (u_{1}-u_{2})' -
\textmd{div}(a_1 \nabla (u_{1}-u_{2})) =\textmd{div}((a_{1}-a_{2}) \nabla u_{2}),
\end{equation}
subject to the initial condition
\begin{equation}\label{eq305}
(u_{1}-u_{2})(x,0)=0,~ (u_{1}-u_{2})_{t}(x,0)=0, \quad x\in\Omega,
\end{equation}
and the boundary condition
\begin{equation}\label{eq310}
\partial_n (u_{1}-u_{2})=0, \quad x\in \partial\Omega, \quad t\in(0,T)
\end{equation}
in the distribution sense.

It suffices to show that $u_{1}-u_{2}=0$. Now, fix $s\in(0,t)$ and let
$$
v_{k}(t)=\left\{\begin{array}{ll}
\int_{t}^{s} u_{k}(\tau)\,d\tau,&0<t\leq s\\0,&s\leq t<T
\end{array}\right.
$$
for $k=1,2$. Then for every $t\in(0,T)$, we have that $v_{k}(t)\in H^{1}(\Omega)$ and $\partial_{n}v_{k}=0$ on $\partial\Omega$.
Multiplying (\ref{eq300}) by $v_{1}-v_{2}$ and integrating, we obtain
\begin{eqnarray*}
\int_{0}^{s}\int_{\Omega}(u_{1}-u_{2})''(v_{1}-v_{2})  &+& \eta (u_{1}-u_{2})'(v_{1}-v_{2}) -
\textmd{div}(a_{1} \nabla (u_{1}-u_{2}))(v_{1}-v_{2}) \,dx\,dt\\
&=& \int_{0}^{s}\int_{\Omega}\textmd{div}((a_{1}-a_{2}) \nabla u_{2})(v_{1}-v_{2})\,dx\,dt.
\end{eqnarray*}

Applying integration by parts with respect to the time variable, we obtain
\begin{eqnarray*}
&& -\int_{0}^{s}\int_{\Omega}(u_{1}-u_{2})' (v_{1}-v_{2})' \,dx\,dt
- \eta \int_{0}^{s}\int_{\Omega} (u_{1}-u_{2})(v_{1}-v_{2})' \,dx\,dt
\\
&& \qquad\qquad + \int_{0}^{s}\int_{\Omega}(a_{1} \nabla (u_{1}-u_{2})) \cdot \nabla(v_{1}-v_{2}) \,dx\,dt  = - \int_{0}^{s}\int_{\Omega} (a_{1}-a_{2}) \nabla u_{2}\cdot \nabla(v_{1}-v_{2})\,dx\,dt
\end{eqnarray*}
by noting the initial condition (\ref{eq305}) and the fact $v_{k}(s) \equiv 0$, $k=1, 2$.

If we set $v_{k}'=-u_{k}$ in the above equation, we obtain
\begin{eqnarray*}
&& \int_{0}^{s}\int_{\Omega}(u_{1}-u_{2})' (u_{1}-u_{2}) \,dx\,dt
+ \eta \int_{0}^{s}\int_{\Omega} |u_{1}-u_{2}|^{2} \,dx\,dt \\ && \qquad\qquad
+ \int_{0}^{s}\int_{\Omega}(a_{1} \nabla (v_{1}-v_{2})')\cdot \nabla (v_{1}-v_{2}) \,dx\,dt
= - \int_{0}^{s}\int_{\Omega} (a_{1}-a_{2}) \nabla u_{2}\cdot \nabla(v_{1}-v_{2})\,dx\,dt,
\end{eqnarray*}
or equivalently
\begin{eqnarray*}
&& \frac{1}{2}\int_{\Omega} |u_{1}-u_{2}|^{2} \,dx
+ \eta \int_{0}^{s}\int_{\Omega} |u_{1}-u_{2}|^{2} \,dx\,dt - \frac{1}{2}\int_{\Omega} a_{1} |\nabla (v_{1}-v_{2})|^{2}|_{t=s} \,dx + \frac{1}{2}\int_{\Omega} a_{1} |\nabla (v_{1}-v_{2})|^{2}|_{t=0} \,dx \\
&& \qquad + \frac{1}{2}\int_{0}^{s}\int_{\Omega} a_{1}' |\nabla (v_{1}-v_{2})|^{2} \,dx\,dt
= - \int_{0}^{s}\int_{\Omega} (a_{1}-a_{2}) \nabla u_{2}\cdot \nabla(v_{1}-v_{2})\,dx\,dt
\end{eqnarray*}
Since $\nabla v_{k}(s) \equiv0$, one can deduce that
\begin{eqnarray*}
&&\frac{1}{2}\int_{\Omega} |u_{1}-u_{2}|^{2} \,dx
+ \eta \int_{0}^{s}\int_{\Omega} |u_{1}-u_{2}|^{2} \,dx\,dt
 + \frac{1}{2}\int_{\Omega} a_{1} |\nabla (v_{1}-v_{2})|^{2}|_{t=0} \,dx \\
&& \qquad = - \int_{0}^{s} \left(\int_{\Omega} (a_{1}-a_{2}) \nabla u_{2}\cdot \nabla(v_{1}-v_{2})\,dx\right)\,dt
        - \frac{1}{2} \int_{0}^{s}\int_{\Omega} a_{1}' |\nabla (v_{1}-v_{2})|^{2} \,dx\,dt \\
\end{eqnarray*}

Denote $C_6= (\varepsilon +  C^2_{1}C^2_{2} ||u_{0}||^2_{H^{1}(\Omega)})^{\frac{p-2}{2}}$ and $C_7= (2-p)N ( C_{1}C_{2})^2 \varepsilon^{\frac{p-4}{2}} ||u_{0}||^2_{H^{1}(\Omega)}$, and by inequalities (\ref{Ineqa2}) and (\ref{Ineqat2}) we get
\begin{eqnarray*}
&& \frac{1}{2}\int_{\Omega} |u_{1}-u_{2}|^{2} \,dx
+ \eta \int_{0}^{s}\int_{\Omega} |u_{1}-u_{2}|^{2} \,dx\,dt
 + \frac{C_6}{2}\int_{\Omega}  |\nabla (v_{1}-v_{2})|^{2}|_{t=0} \,dx \\
&& \leq \int_{0}^{s} \left( ||a_{1}-a_{2}||_{L^{\infty}(\Omega)} \bigg(\int_{\Omega}  |\nabla u_{2}|^{2}\,dx\bigg)^{1/2}
\bigg(\int_{\Omega}  |\nabla (v_{1}-v_{2})|^{2}\,dx\bigg)^{1/2} \right)\,dt + \frac{C_{7}}{2} \int_{0}^{s}\int_{\Omega}  |\nabla (v_{1}-v_{2})|^{2} \,dx\,dt \\
\end{eqnarray*}
Since $G_{\sigma}$ is smooth, there is for every $p\geq1$ a positive constant $C_{p,\sigma}$ depending only on a $p$ and $\sigma$ such that
$$
||a_{1}-a_{2}||_{L^{\infty}(\Omega)} \leq C_{p,\sigma}||u_{1}-u_{2}||_{L^{2}(\Omega)}
$$
We have
\begin{eqnarray*}
&& \frac{1}{2}\int_{\Omega} |u_{1}-u_{2}|^{2} \,dx
+ \eta \int_{0}^{s}\int_{\Omega} |u_{1}-u_{2}|^{2} \,dx\,dt
 + \frac{C_6}{2}\int_{\Omega}  |\nabla (v_{1}-v_{2})|^{2}|_{t=0} \,dx \\
&& \leq \int_{0}^{s} \left( C_{p,\sigma}\bigg(\int_{\Omega}  |u_{1}-u_{2}|^{2}\,dx\bigg)^{1/2} \bigg(\int_{\Omega}  |\nabla u_{2}|^{2}\,dx\bigg)^{1/2}
\bigg(\int_{\Omega}  |\nabla (v_{1}-v_{2})|^{2}\,dx\bigg)^{1/2} \right)\,dt  \\
&& \qquad\qquad + \frac{C_{7}}{2} \int_{0}^{s}\int_{\Omega}  |\nabla (v_{1}-v_{2})|^{2} \,dx\,dt \\
\end{eqnarray*}
Since $u\in H^{1}(\Omega)$, then a positive constant $C_8$ exists such that
\begin{eqnarray*}
&& \frac{1}{2}\int_{\Omega} |u_{1}-u_{2}|^{2} \,dx
+ \eta \int_{0}^{s}\int_{\Omega} |u_{1}-u_{2}|^{2} \,dx\,dt
 + \frac{C_6}{2}\int_{\Omega}  |\nabla (v_{1}-v_{2})|^{2}|_{t=0} \,dx \\
&& \quad \leq \int_{0}^{s} \left( C_{p,\sigma} C_{8} \bigg(\int_{\Omega}  |u_{1}-u_{2}|^{2}\,dx\bigg)^{1/2}
\bigg(\int_{\Omega}  |\nabla (v_{1}-v_{2})|^{2}\,dx\bigg)^{1/2} \right)\,dt + \frac{C_{7}}{2} \int_{0}^{s}\int_{\Omega}  |\nabla (v_{1}-v_{2})|^{2} \,dx\,dt \\
\end{eqnarray*}
Applying Young's inequality, a positive constant $C_{10}$ exists such that
\begin{eqnarray}\label{eq330}
&& \frac{1}{2}\int_{\Omega} |u_{1}-u_{2}|^{2} \,dx
+ \eta \int_{0}^{s}\int_{\Omega} |u_{1}-u_{2}|^{2} \,dx\,dt
 + \frac{C_6}{2} \int_{\Omega}  |\nabla (v_{1}-v_{2})|^{2}|_{t=0} \,dx \nonumber\\
&& \qquad \leq C_{10}\int_{0}^{s} \left( \int_{\Omega}  |u_{1}-u_{2}|^{2}\,dx
+ \int_{\Omega}  |\nabla (v_{1}-v_{2})|^{2}\,dx \right)\,dt
\end{eqnarray}

Define
$$
w_{k}(t)=\int_{0}^{t}u_{k}(\tau)\,d\tau,\quad 0<t<T,\quad k=1,2.
$$
Then (\ref{eq330}) can be rewritten as
\begin{eqnarray}\label{eq335}
&&\frac{1}{2}\int_{\Omega} |u_{1}-u_{2}|^{2} \,dx
+ \eta \int_{0}^{s}\int_{\Omega} |u_{1}-u_{2}|^{2} \,dx\,dt
 + \frac{C_6}{2} \int_{\Omega}  |\nabla (w_{1}-w_{2})(s)|^{2} \,dx \\
&& \qquad \leq C_{10}\int_{0}^{s} \left( \int_{\Omega}  |u_{1}-u_{2}|^{2}\,dx
+ \int_{\Omega}  |\nabla (w_{1}-w_{2})(s) -  \nabla (w_{1}-w_{2})(t) |^{2}\,dx \right)\,dt\nonumber
\end{eqnarray}
Using the inequality
$$
||\nabla (w_{1}-w_{2})(s) -  \nabla (w_{1}-w_{2})(t) ||^{2}_{L^{2}(\Omega)} \leq
2 ||\nabla (w_{1}-w_{2})(s) ||^{2}_{L^{2}(\Omega)} +
2 ||\nabla (w_{1}-w_{2})(t) ||^{2}_{L^{2}(\Omega)}
$$
we have
\begin{eqnarray}\label{eq340}
&& \frac{1}{2}\int_{\Omega} |u_{1}-u_{2}|^{2} \,dx
+ \eta \int_{0}^{s}\int_{\Omega} |u_{1}-u_{2}|^{2} \,dx\,dt
 + \frac{C_6}{2} \int_{\Omega}  |\nabla (w_{1}-w_{2})(s)|^{2} \,dx \\
&& \qquad \leq C_{10}\int_{0}^{s} \left( \int_{\Omega}  |u_{1}-u_{2}|^{2}\,dx
+ 2\int_{\Omega}  |\nabla (w_{1}-w_{2})(t) |^{2}\,dx \right)\,dt + 2sC_{10}  ||\nabla (w_{1}-w_{2})(s) ||^{2}_{L^{2}(\Omega)} \nonumber
\end{eqnarray}

If we choose $T=T_{1}$ sufficiently small and $\varepsilon_{1} >0$ such that $C_6/2 - 2 T_{1}  C_{10} \geq \varepsilon_{1}$, then, for $0\leq s\leq T_{1}$, we have

\begin{eqnarray}\label{eq345}
&&\frac{1}{2}\int_{\Omega} |u_{1}-u_{2}|^{2} \,dx
+ \eta \int_{0}^{s}\int_{\Omega} |u_{1}-u_{2}|^{2} \,dx\,dt
 + \varepsilon_{1} \int_{\Omega}  |\nabla (w_{1}-w_{2})(s)|^{2} \,dx \\
&& \qquad \leq C_{10}\int_{0}^{s} \left( \int_{\Omega}  |u_{1}-u_{2}|^{2}\,dx
+ \int_{\Omega}  |\nabla (w_{1}-w_{2})(t) |^{2}\,dx \right)\,dt, \nonumber
\end{eqnarray}
which implies that
\begin{eqnarray*}
\frac{1}{2}\int_{\Omega} |u_{1}-u_{2}|^{2} \,dx
+ \varepsilon_{1} \int_{\Omega}  |\nabla (w_{1}-w_{2})(s)|^{2} \,dx  \leq C_{10}\int_{0}^{s} \left( \int_{\Omega}  |u_{1}-u_{2}|^{2}\,dx
+ \int_{\Omega}  |\nabla (w_{1}-w_{2})(t) |^{2}\,dx \right)\,dt
\end{eqnarray*}
and finally, if we define $C=(2+1/\varepsilon_{1}) C_{10}$, we obtain
\begin{eqnarray*}
\int_{\Omega} |u_{1}-u_{2}|^{2} \,dx
+  \int_{\Omega}  |\nabla (w_{1}-w_{2})(s)|^{2} \,dx
\leq C\int_{0}^{s} \left( \int_{\Omega}  |u_{1}-u_{2}|^{2}\,dx
+ \int_{\Omega}  |\nabla (w_{1}-w_{2})(t) |^{2}\,dx \right)\,dt
\end{eqnarray*}

Using Gr\"onwall's inequality, we obtain that $u_{1} - u_{2}=0$ on $(0,T_{1}]$.
By applying the argument on the intervals $(T_{1},T_{2}]$, $(T_{2},T_{3}]$, and so on, we see that
$u_{1} - u_{2}=0$ on $(0,T]$.
\end{proof}

\section{Regularization property of the damped flow.}

On the first glimpse, the PDE-based formulation (\ref{SecondOrderFlow}) looks better than the original variational formulation (\ref{energyEu}) because of the absence of the regularization parameter $\alpha$, which is always an obstacle for solving an ill-posed inverse problem. Unfortunately, the ill-posedness remains. Indeed, the terminating time $T$ of the damped flow (\ref{SecondOrderFlow}), instead of $\alpha$ in the original problem (\ref{energyEu}), plays the role of the regularization parameter for the image denoising problem. If the damped flow is discretized, then the formulation (\ref{SecondOrderFlow}) presents a second order iteration scheme; see Section 4 for details. The choice of the terminating time $T$ for the damped flow exactly coincides with the stopping rule for the asymptotical regularization and its generalization, see e.g. (\cite{Tautenhahn1994,ZhangHof2018,ZhangHof2019,GongHofmannZhang2019}).

In this section, we devote our researches to the method of choosing the terminating time $T$. First, let us consider the long-term behavior of the damped flow (\ref{SecondOrderFlow}). Though we only proved the local well-posedness of the dynamical system (\ref{SecondOrderFlow}), we still assume the global existence and uniqueness of the solution to (\ref{SecondOrderFlow}), which will be used for the stability analysis with respect to the noisy image.

Denote by $U^*$ the equilibrium solutions of (\ref{SecondOrderFlow}), namely, $\textmd{div}( (\varepsilon^{\frac{2}{p-2}} + |\nabla G_{\sigma} \star u^*|^{2})^{\frac{p-2}{2}} \nabla u^* ) =0$ for all $u^*\in U^*$. Note that $\textmd{div}\left( (\varepsilon^{\frac{2}{p-2}} + |\nabla G_{\sigma} \star \cdot|^{2})^{\frac{p-2}{2}} \nabla \cdot \right)$ is a monotone operator in a Banach space, therefore, using the Galerkin method, it is not difficult to show that there exists a solution to the equation $\textmd{div}( (\varepsilon^{\frac{2}{p-2}} + |\nabla G_{\sigma} \star u|^{2})^{\frac{p-2}{2}} \nabla u ) =0$ with the zero Neumann boundary condition. Moreover, such a solution is unique up to an overall additive constant. Obviously, $u=const.$ is a solution to the equation $\textmd{div}\left( (\varepsilon^{\frac{2}{p-2}} + |\nabla G_{\sigma} \star u|^{2})^{\frac{p-2}{2}} \nabla u \right)$ $=0$. Hence, we conclude that $U^*=\{u: u'=0\}$. Then, based on the results from~\cite{Haraux1988} and~\cite{Haraux2007} (Theorem 2.1 in~\cite{Haraux1988} and Theorem 3.1 in~\cite{Haraux2007}), we have the convergence result of the global and bounded solutions of problem (\ref{SecondOrderFlow}), i.e., the following theorem holds.
\begin{theorem}\label{LongtimeThm}
Let $\Omega$ be a bounded, open, and connected set in $\mathbf{R}^N$ ($N=2, 3$) having a boundary $\partial\Omega$ of class $C^2$. Then, there exists a constant $u_\infty$ such that the solution $u$ to the equation (\ref{SecondOrderFlow}) satisfies
\begin{equation}\label{decay}
\|u_t\|_{L^2(\Omega)} + \|u(x,t)-u_\infty\|_{L^2(\Omega)} \leq C(u_0,\Omega) e^{-c_d t},
\end{equation}
where $C(u_0,\Omega)>0$ depends on the initial data $u_0$ and the geometry of domain $\Omega$, while $c_d=c_d(\eta,\varepsilon,\sigma,p,\Omega)>0$ depends on model parameters $\eta,\varepsilon,\sigma,p,\Omega$, but not on $u_0$ and $t$.
\end{theorem}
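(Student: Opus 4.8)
The plan is to treat (\ref{SecondOrderFlow}) as an abstract damped second-order evolution equation $u'' + \eta u' + A(u) = 0$ with $A(u) = -\mathrm{div}(a^{\varepsilon}(\nabla G_{\sigma}\star u)\nabla u)$, and to combine two ingredients: a conservation law that pins down the limiting constant $u_\infty$, and a dissipative Lyapunov structure which, together with the abstract convergence results of \cite{Haraux1988,Haraux2007}, first yields $u(t)\to u_\infty$ and then the exponential rate asserted in (\ref{decay}). Throughout I would use the assumed global existence of a bounded orbit, together with the uniform ellipticity $0<c_1\le a^{\varepsilon}\le c_2$ supplied by Lemma \ref{inequalitya}, so that $A$ is uniformly elliptic along the trajectory.

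First I would identify $u_\infty$. Integrating the equation in (\ref{SecondOrderFlow}) over $\Omega$ and using the Neumann condition $\partial_n u=0$ (which forces $\int_\Omega \mathrm{div}(a^{\varepsilon}\nabla u)\,dx=0$), the mean $m(t)=\int_\Omega u(x,t)\,dx$ satisfies $m''+\eta m'=0$ with $m'(0)=\int_\Omega u_t(x,0)\,dx=0$. Hence $m'\equiv 0$ and $m(t)\equiv\int_\Omega u_0\,dx$, so the mean is conserved and the only admissible limit is the constant $u_\infty=\frac{1}{|\Omega|}\int_\Omega u_0\,dx$. This is consistent with the equilibrium set $U^*=\{u:u'=0\}$ being the constants and with the stated dependence $C(u_0,\Omega)$.

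Next I would set up the energy $E(t)=\tfrac12\|u_t\|_{L^2(\Omega)}^2+\tfrac12\int_\Omega a^{\varepsilon}(\nabla G_{\sigma}\star u)\,|\nabla u|^2\,dx$. Testing the equation with $u_t$ and integrating by parts gives
\begin{equation*}
\frac{dE}{dt}=-\eta\|u_t\|_{L^2(\Omega)}^2+\tfrac12\int_\Omega a_t^{\varepsilon}(\nabla G_{\sigma}\star u)\,|\nabla u|^2\,dx .
\end{equation*}
The second term is the price of the non-variational convolution coupling and carries no sign; here the smoothing by $G_\sigma$ is decisive, since exactly as in the proof of Lemma \ref{inequalitya} one obtains $\|a_t^{\varepsilon}\|_{L^\infty(\Omega)}\le C\,\|u_t\|_{L^2(\Omega)}$, the factor $u_t$ entering linearly through $\partial_{x_j}G_\sigma\star u_t$. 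Using this bound, the regularity estimates of Theorem \ref{MainThm} (which give precompactness of the orbit in the energy space by compact Sobolev embedding), and the uniform ellipticity, I would verify the hypotheses of Theorem 2.1 in \cite{Haraux1988} and Theorem 3.1 in \cite{Haraux2007} and deduce the qualitative convergence $\|u_t\|_{L^2(\Omega)}+\|u(\cdot,t)-u_\infty\|_{H^1(\Omega)}\to 0$ as $t\to\infty$.

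Finally I would upgrade this to the exponential rate (\ref{decay}) through the perturbed Lyapunov functional $\mathcal{L}(t)=E(t)+\delta\int_\Omega u_t\,(u-u_\infty)\,dx$ for small $\delta>0$. Differentiating and using the equation, the cross term produces $\delta\|u_t\|_{L^2(\Omega)}^2-\delta\eta\int_\Omega u_t(u-u_\infty)\,dx-\delta\int_\Omega a^{\varepsilon}|\nabla u|^2\,dx$; the Poincar\'e--Wirtinger inequality on zero-mean functions, $\|\nabla(u-u_\infty)\|_{L^2(\Omega)}^2\ge\lambda_1(\Omega)\|u-u_\infty\|_{L^2(\Omega)}^2$ with $\lambda_1(\Omega)$ the first nonzero Neumann eigenvalue, together with $a^{\varepsilon}\ge c_1$, then renders $\mathcal{L}$ equivalent to $\|u_t\|_{L^2(\Omega)}^2+\|u-u_\infty\|_{H^1(\Omega)}^2$ and coercive enough to force $\tfrac{d\mathcal{L}}{dt}\le-c\,\mathcal{L}$ for a constant $c>0$ depending on $\Omega$ (through $\lambda_1(\Omega)$) and the fixed damping $\eta$, after which integrating and using the norm equivalence yields (\ref{decay}). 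I expect the main obstacle to be precisely the indefinite correction $\tfrac12\int_\Omega a_t^{\varepsilon}|\nabla u|^2\,dx$ caused by the absence of an exact gradient structure: the qualitative first stage is indispensable because only once the orbit lies in a small neighborhood of $u_\infty$ does the bound $\|a_t^{\varepsilon}\|_\infty\le C\|u_t\|_{L^2(\Omega)}$ turn this term into a cubic, higher-order quantity of the form $C\|u_t\|_{L^2(\Omega)}\,\|\nabla(u-u_\infty)\|_{L^2(\Omega)}^2$ that is dominated by the dissipation. A secondary point to handle with care is that only local well-posedness was established, so the entire argument rests on the assumed global existence and boundedness of the solution stated just before the theorem.
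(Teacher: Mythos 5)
Your proposal is correct in substance and shares its skeleton with the paper's argument, but it does strictly more work, and the comparison is instructive. The paper's entire ``proof'' of Theorem \ref{LongtimeThm} is the paragraph preceding it: the equilibrium set $U^*$ is identified with the constants (via monotonicity of the elliptic operator and uniqueness up to additive constants of the Neumann problem), global existence and boundedness are assumed, and then the full exponential estimate (\ref{decay}) is attributed wholesale to Theorem 2.1 of \cite{Haraux1988} and Theorem 3.1 of \cite{Haraux2007}. You keep the same two pillars (assumed global bounded solutions, Haraux's convergence theory for the qualitative stage) but add two genuine contributions. First, you pin down the limit explicitly: integrating the PDE over $\Omega$ and using $\partial_n u=0$ gives $m''+\eta m'=0$ with $m'(0)=0$, hence conservation of the mean and $u_\infty=\frac{1}{|\Omega|}\int_\Omega u_0\,dx$; this is sharper than anything stated in the paper, and it is precisely what legitimizes applying Poincar\'e--Wirtinger to $u-u_\infty$ in your rate argument. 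Second, you derive the exponential rate yourself via the perturbed functional $E(t)+\delta\int_\Omega u_t(u-u_\infty)\,dx$, with the correct two-stage structure: qualitative convergence first, so that the indefinite term $\frac12\int_\Omega a_t^\varepsilon|\nabla u|^2\,dx\le C\|u_t\|_{L^2(\Omega)}\|\nabla u\|_{L^2(\Omega)}^2$ becomes higher order and is absorbed by the dissipation. The paper simply outsources this quantitative step to the references. Two caveats. (i) Your first stage still leans on Haraux's theorems, which are formulated for gradient(-like) second-order systems; as you yourself observe, the convolution coefficient destroys the exact gradient structure here, so the verification burden the paper skips is not eliminated by your argument either --- it is merely confined to the qualitative stage. (ii) Your decay constant necessarily depends on $\eta$ and on the ellipticity bounds of $a^\varepsilon$ along the trajectory (hence on $\varepsilon$, $p$, and the bound on the orbit, i.e.\ on $u_0$), whereas the theorem asserts $c=c(\Omega)$; your version is the more honest one, but it does not recover the literal dependence claimed in the statement.
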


Suppose that instead of the exact image $\bar{u}$ we are given approximate one, $u^\delta_0$, such that $\frac{\|u^\delta_0-\bar{u}\|_{L^2(\Omega)}}{\|\bar{u}\|_{L^2(\Omega)}} \leq \delta$, where the positive number $\delta$ denotes the degree of difference between the accurate image $\bar{u}$ and polluted image $u^\delta_0$. Obviously, if there is no noise, i.e. $\delta=0$, no denoising algorithm is needed. In this case, $T(\delta)=0$.

\begin{theorem}
(A priori selection method for $T(\delta)$)\\ Denote by $u(x,t)$ the solution of the damped flow (\ref{SecondOrderFlow}) with the initial data $u^\delta_0(x)$. Then, if the terminating time point is chosen as $T(\delta)=C_1 \ln(1+ C_2 \delta^\gamma)$, where $C_1,C_2,\gamma$ are positive constants independent of $\delta$, the approximate solution $u(T(\delta),x)$ converges to the exact image $\bar{u}(x)$ as $\delta\to0$.
\end{theorem}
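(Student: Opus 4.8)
The plan is to compare the noisy flow against the flow started from the clean image and to exploit the global decay estimate of Theorem~\ref{LongtimeThm}. Write $u^{\delta}(\cdot,t)$ for the solution of (\ref{SecondOrderFlow}) with datum $u_{0}^{\delta}$ and $\bar{u}(\cdot,t)$ for the solution with the exact datum $\bar{u}$, so that $\bar{u}(\cdot,0)=\bar{u}$. By the triangle inequality,
$$
\|u^{\delta}(\cdot,T)-\bar{u}\|_{L^{2}(\Omega)}\le
\underbrace{\|u^{\delta}(\cdot,T)-\bar{u}(\cdot,T)\|_{L^{2}(\Omega)}}_{\text{(stability)}}
+\underbrace{\|\bar{u}(\cdot,T)-\bar{u}\|_{L^{2}(\Omega)}}_{\text{(flow evolution)}},
$$
and I would estimate the two pieces separately, observing first that the prescribed rule forces $T(\delta)=C_{1}\ln(1+C_{2}\delta^{\gamma})\to 0$ as $\delta\to0$.

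For the flow-evolution term I would use only the clean flow, whose constants are frozen. Writing $\bar{u}(\cdot,T)-\bar{u}=\int_{0}^{T}\bar{u}_{t}\,d\tau$ and invoking the bound $\|\bar{u}_{t}(\cdot,\tau)\|_{L^{2}(\Omega)}\le C(\bar{u},\Omega)e^{-c(\Omega)\tau}$ from Theorem~\ref{LongtimeThm}, one gets $\|\bar{u}(\cdot,T)-\bar{u}\|_{L^{2}(\Omega)}\le \frac{C(\bar{u},\Omega)}{c(\Omega)}\bigl(1-e^{-c(\Omega)T}\bigr)$, which tends to $0$ because $T(\delta)\to0$. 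Since this constant depends only on the fixed exact image, no uniformity in $\delta$ is required for this piece.

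The stability term is the heart of the matter, and I would obtain it by re-running the uniqueness argument of Theorem~\ref{MainThm} with a nonzero initial difference. Setting $w=u^{\delta}-\bar{u}$, the function $w$ solves an equation of the form (\ref{eq300}) with $w(\cdot,0)=u_{0}^{\delta}-\bar{u}$ (so $\|w(\cdot,0)\|_{L^{2}(\Omega)}\le \delta\|\bar{u}\|_{L^{2}(\Omega)}$) and $w_{t}(\cdot,0)=0$. Testing against the integrated function $V(t)=\int_{t}^{s}w\,d\tau$ exactly as in the uniqueness proof avoids any need to control $\nabla w_{t}$: the second-order term yields $\tfrac12(\|w(s)\|_{L^{2}(\Omega)}^{2}-\|w(\cdot,0)\|_{L^{2}(\Omega)}^{2})$, the diffusion term produces $\tfrac12\int_{\Omega}a_{1}(0)|\nabla V(0)|^{2}\,dx$ (with $\nabla V$ vanishing at $t=s$ and the integrated gradient starting from zero, so that $\|\nabla w(\cdot,0)\|$ never appears), and the coefficient-difference term is handled by the Lipschitz bound $\|a_{1}-a_{2}\|_{L^{\infty}(\Omega)}\le C_{p,\sigma}\|w\|_{L^{2}(\Omega)}$ combined with Young's inequality. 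Collecting terms and applying Gr\"onwall's inequality as in (\ref{eq345})--(\ref{eq355}) then yields $\|w(\cdot,s)\|_{L^{2}(\Omega)}^{2}\le C\,e^{2Ls}\,\|w(\cdot,0)\|_{L^{2}(\Omega)}^{2}$, that is $\|u^{\delta}(\cdot,T)-\bar{u}(\cdot,T)\|_{L^{2}(\Omega)}\le C\,e^{LT}\,\delta\,\|\bar{u}\|_{L^{2}(\Omega)}$, where $C,L$ arise from the coefficient bounds of Lemma~\ref{inequalitya} and are uniform in $\delta$ once $\|u_{0}^{\delta}\|_{H^{1}(\Omega)}$ is bounded (which holds for small $\delta$ since $u_{0}^{\delta}\to\bar{u}$).

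Finally I would insert $T(\delta)=C_{1}\ln(1+C_{2}\delta^{\gamma})$. Then $e^{LT(\delta)}=(1+C_{2}\delta^{\gamma})^{LC_{1}}\to1$, so the stability term behaves like $C\delta\|\bar{u}\|_{L^{2}(\Omega)}\to0$, while $1-e^{-c(\Omega)T(\delta)}=1-(1+C_{2}\delta^{\gamma})^{-c(\Omega)C_{1}}\to0$, so the flow-evolution term also vanishes; adding the two proves $u^{\delta}(\cdot,T(\delta))\to\bar{u}$ in $L^{2}(\Omega)$ as $\delta\to0$. The genuinely delicate point, and the main obstacle, is the stability estimate: one must extract an explicit, $\delta$-uniform control of $\|u^{\delta}(\cdot,T)-\bar{u}(\cdot,T)\|_{L^{2}(\Omega)}$ in terms of the initial discrepancy $\delta$ and the elapsed time $T$. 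The Gr\"onwall argument produces the factor $e^{LT}$, and the logarithmic choice of $T(\delta)$ is precisely what keeps this factor bounded (indeed $\to1$) while still sending $T(\delta)\to0$, so that both error contributions disappear in the limit.
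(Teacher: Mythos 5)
Your proposal is correct in outline but takes a genuinely different route from the paper. The paper never compares the noisy flow to the clean flow: it splits
$\| u(T,\cdot)-\bar{u}\|_{L^{2}(\Omega)} \le \| u(T,\cdot)-u_{0}^{\delta}\|_{L^{2}(\Omega)} + \| u_{0}^{\delta}-\bar{u}\|_{L^{2}(\Omega)}$,
bounds the first term by $\int_{0}^{T}\|u_{t}\|_{L^{2}(\Omega)}\,dt \le \frac{C(u_{0},\Omega)}{c(\Omega)}\bigl(1-e^{-c(\Omega)T}\bigr)$ using the decay estimate (\ref{decay}) applied directly to the noisy flow, bounds the second by $\delta\|\bar{u}\|_{L^{2}(\Omega)}$ from the noise model, and then inserts $T(\delta)$ together with the elementary inequality $\left(1+C_{2}\delta^{\gamma}\right)^{-c(\Omega)C_{1}} \ge 1-c(\Omega)C_{1}C_{2}\delta^{\gamma}$, obtaining the rate $\min(\gamma,1)$. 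No stability (continuous-dependence) estimate is needed anywhere. Your decomposition through the clean flow requires exactly such an estimate, which the paper never states; your plan to extract it by re-running the uniqueness argument of Theorem \ref{MainThm} with nonzero initial difference is sound (the integrated test function does keep $\nabla w(\cdot,0)$ out of the computation), and it buys something the paper's proof lacks: the exponential-decay constant is applied only to the fixed exact image, so that piece is automatically uniform in $\delta$, whereas the paper's constant $C(u_{0}^{\delta},\Omega)$ silently varies with $\delta$.

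Two caveats on your stability step. First, with $w(\cdot,0)\neq 0$ the damping term also produces a boundary contribution $-\eta\int_{\Omega}w(\cdot,0)V(0)\,dx$ under the integration by parts, which you do not mention; it is harmless, since Young's inequality and $\|V(0)\|_{L^{2}(\Omega)}^{2}\le s\int_{0}^{s}\|w\|_{L^{2}(\Omega)}^{2}\,dt$ absorb it into the Gr\"onwall structure, but it must appear in the accounting. Second, your justification of $\delta$-uniformity of the constants, namely that $\|u_{0}^{\delta}\|_{H^{1}(\Omega)}$ is bounded ``since $u_{0}^{\delta}\to\bar{u}$,'' is not valid: the noise model gives only $L^{2}(\Omega)$ closeness, and an $L^{2}$-small perturbation can have arbitrarily large $H^{1}$ norm (the paper's multiplicative uniform noise is precisely of this rough type). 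You therefore need to \emph{assume} a uniform $H^{1}$ bound on the noisy data for the coefficient bounds of Lemma \ref{inequalitya} to be uniform in $\delta$. To be fair, the paper's own proof rests on a comparable tacit assumption, since its constant $C(u_{0}^{\delta},\Omega)$ in (\ref{decay}) must also remain bounded as $\delta\to 0$; but in your argument this should be stated explicitly rather than derived from the (insufficient) $L^{2}$ convergence.
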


\begin{proof}
Using the estimate (\ref{decay}), we obtain
\begin{eqnarray*}\label{Ineq}
\begin{array}{ll}
& \| u(T,x) - \bar{u}(x)\|_{L^2(\Omega)}  \leq \| u(T(\delta),x) - u^\delta_0(x)\|_{L^2(\Omega)} + \| u^\delta_0(x) - \bar{u}(x)\|_{L^2(\Omega)} \leq \int^T_0 \| u_t\|_{L^2(\Omega)} dt +  \delta \|\bar{u}\|_{L^2(\Omega)} \\ & \qquad\qquad
 \leq \int^T_0 C(u_0,\Omega) e^{-c_d t} dt + \delta \|\bar{u}\|_{L^2(\Omega)} = \frac{C(u_0,\Omega)}{c_d} \left(1- e^{-c_d T} \right) + \delta \|\bar{u}\|_{L^2(\Omega)}.
\end{array}
\end{eqnarray*}

Note that the terminating time point is chosen as $T(\delta)=C_1 \ln(1+ C_2 \delta^\gamma)$. By combining the above inequalities, we can deduce that
\begin{equation}\label{Ineq1new}
\| u(T(\delta),x) - \bar{u}(x)\|_{L^2(\Omega)} \leq \frac{C(u_0,\Omega)}{c_d} \left( 1- \left(1+ C_2 \delta^\gamma \right)^{-c_d C_1} \right) + \delta \|\bar{u}\|_{L^2(\Omega)} .
\end{equation}

On the other hand, for a sufficiently small $\delta$, the inequality $\left(1+ C_2 \delta^\gamma \right)^{-c_d C_1} \geq 1 - c_d C_1C_2 \delta^\gamma$ holds. Therefore, by (\ref{Ineq1new}), we can deduce that
\begin{eqnarray*}
\| u(T(\delta),x) - \bar{u}(x)\|_{L^2(\Omega)} \leq C(u_0,\Omega) C_1C_2 \delta^\gamma  + \|\bar{u}\|_{L^2(\Omega)} \delta,
\end{eqnarray*}
which implies the convergence of the obtained approximate solution $u(T(\delta),x)$.
\end{proof}

By the proof of the above theorem, we know that under the a priori selection method for the final time point $T(\delta)=C_1 \ln(1+ C_2 \delta^\gamma)$, the convergence rate of the method is $\min(\gamma,1)$. However, an a priori parameter choice is not suitable in practice, since a good terminating time point $T$ requires knowledge of the unknown image $\bar{u}(x)$. Moreover, there are intractable factors, $C_1, C_2$, around the parameter. This knowledge is not necessary for a posteriori parameter choice. Here, we develop a modified Morozov's discrepancy principle of choosing the terminating time point $T$.

Define by
\begin{eqnarray*}
\sigma(T)=\| u(T,x) - u^\delta_0(x)\|_{L^2(\Omega)} / \|u^\delta_0(x)\|_{L^2(\Omega)}
\end{eqnarray*}
the tolerability ratio of the difference between the estimated and noisy images.

Introduce the discrepancy function
\begin{eqnarray*}
\chi(T)=\sigma(T) - \delta,
\end{eqnarray*}
which describes the difference between the tolerability ratio of the denoised image and the degree of the measured noisy image. Obviously, by Theorem 2.3, $\chi(T)$ is a continuous function.

\begin{theorem}
(A posteriori selection method for $T(\delta)$)\\
Suppose that the noisy image $u^\delta_0$ is not an ``almost-constant'', i.e.
\begin{equation}\label{assumptionData}
\inf\limits_{c \textrm{~is a constant}} \|c-u^\delta_0(x)\| = \mu>0.
\end{equation}
Then, there exists a positive number $\delta_0>0$ such that for all $\delta\in (0,\delta_0]$, the discrepancy function $\chi(T)$ admits at least one positive root. Moreover, the approximate solution $u(T(\delta),x)$, with the terminating time point chosen as the positive root of $\chi(T)$, converges to the exact image $\bar{u}(x)$ as $\delta\to0$.
\end{theorem}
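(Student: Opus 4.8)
The plan is to split the argument into the two claims of the statement: first an intermediate-value argument showing that the continuous function $\chi$ changes sign on $(0,\infty)$ and hence vanishes at some $T(\delta)>0$; second a direct triangle-inequality estimate showing that the matching condition $\sigma(T(\delta))=\delta$ forces $u(T(\delta),x)\to\bar u$ as $\delta\to0$.

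For the existence of a positive root I would evaluate $\chi$ at the two ends of the time axis. At $T=0$ the initial condition gives $u(0,x)=u^\delta_0(x)$, so $\sigma(0)=0$ and $\chi(0)=-\delta<0$. For the behaviour as $T\to\infty$ I would invoke Theorem \ref{LongtimeThm}: the decay estimate (\ref{decay}) shows that $u(T,\cdot)$ converges in $L^2(\Omega)$ to a \emph{constant} $u_\infty$. Since $u_\infty$ is constant, the non-triviality hypothesis (\ref{assumptionData}) gives $\|u_\infty-u^\delta_0\|_{L^2(\Omega)}\ge\mu$, whence
\[
\lim_{T\to\infty}\sigma(T)=\frac{\|u_\infty-u^\delta_0\|_{L^2(\Omega)}}{\|u^\delta_0\|_{L^2(\Omega)}}\ge\frac{\mu}{\|u^\delta_0\|_{L^2(\Omega)}}.
\]
Because $\|u^\delta_0-\bar u\|_{L^2(\Omega)}\le\delta\|\bar u\|_{L^2(\Omega)}\to0$, both $\mu=\mu(\delta)$ and $\|u^\delta_0\|_{L^2(\Omega)}$ converge to positive finite limits (the former to $\inf_c\|c-\bar u\|_{L^2(\Omega)}>0$), so the right-hand side stays bounded away from $0$; hence there is $\delta_0>0$ with $\mu/\|u^\delta_0\|_{L^2(\Omega)}>\delta$ for all $\delta\in(0,\delta_0]$. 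For such $\delta$ one has $\lim_{T\to\infty}\chi(T)>0$ while $\chi(0)<0$, and since $\chi$ is continuous the intermediate value theorem yields a positive root $T(\delta)$.

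For the convergence I would exploit that, by the very choice of $T(\delta)$ as a root of $\chi$, the residual is matched exactly: $\sigma(T(\delta))=\delta$, i.e. $\|u(T(\delta),x)-u^\delta_0(x)\|_{L^2(\Omega)}=\delta\,\|u^\delta_0\|_{L^2(\Omega)}$. A single triangle inequality together with the noise bound then gives
\[
\|u(T(\delta),x)-\bar u(x)\|_{L^2(\Omega)}\le\delta\,\|u^\delta_0\|_{L^2(\Omega)}+\delta\,\|\bar u\|_{L^2(\Omega)},
\]
and since $\|u^\delta_0\|_{L^2(\Omega)}\le(1+\delta)\|\bar u\|_{L^2(\Omega)}$ remains bounded, the right-hand side tends to $0$ as $\delta\to0$, which is the asserted convergence. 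Note that, unlike the a priori result, the decay estimate is not needed here.

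The main obstacle is the limit as $T\to\infty$: it is precisely here that the hypothesis (\ref{assumptionData}) enters, because the flow relaxes to a constant steady state and the residual $\sigma(T)$ can only reach the level $\delta$ if its limiting value $\|u_\infty-u^\delta_0\|_{L^2(\Omega)}/\|u^\delta_0\|_{L^2(\Omega)}$ strictly exceeds $\delta$. Making this rigorous relies on the global solution and the decay estimate (\ref{decay}) of Theorem \ref{LongtimeThm} (which the paper assumes via global existence), plus the verification that the comparison level $\mu/\|u^\delta_0\|_{L^2(\Omega)}$ stays above $\delta$ for all sufficiently small $\delta$; once that threshold $\delta_0$ is fixed, the remaining steps are routine.
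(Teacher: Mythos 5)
Your proposal is correct and follows essentially the same route as the paper: show $\chi(0)=-\delta<0$, use the decay estimate (\ref{decay}) together with the non-constancy hypothesis (\ref{assumptionData}) to get $\lim_{T\to\infty}\chi(T)\geq \mu/\|u^\delta_0\|-\delta>0$, invoke continuity and the intermediate value theorem, and then conclude convergence by the triangle inequality using $\sigma(T(\delta))=\delta$. If anything, you are slightly more careful than the paper on one point: you track the dependence $\mu=\mu(\delta)$ and verify that $\mu/\|u^\delta_0\|$ stays bounded away from zero as $\delta\to0$, which justifies a uniform threshold $\delta_0$, whereas the paper simply imposes $\delta<\mu/\|u^\delta_0\|$ for the given noisy image.
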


\begin{proof}
Combine the estimate (\ref{decay}) and the assumption of $u^\delta_0(x)$ in (\ref{assumptionData}), and one can deduce that for $\delta<\mu/\|u^\delta_0(x)\|$:
\begin{eqnarray*}
\lim_{T\to\infty} \chi(T) \geq \frac{\mu}{\|u^\delta_0(x)\|} - \delta>0.
\end{eqnarray*}
On the other hand, by the definition of the approximate solution $u$ (the solution to (\ref{SecondOrderFlow})), we have $\chi(0)=-\delta<0$. Since $\chi(T)$ is a continuous function, $\chi(T)$ must admit at least one positive root.

Now, consider the convergence property of the solution $u(T(\delta),x)$. By the definition of the final time point in selection (the positive root of $\chi(T)$), we obtain
\begin{eqnarray*}
\| u(T,x) - \bar{u}(x)\| \leq \| u(T(\delta),x) - u^\delta_0(x)\| + \| u^\delta_0(x) - \bar{u}(x)\| \leq (\|u^\delta_0(x)\|+1) \delta,
\end{eqnarray*}
which implies the convergence of the desired approximate solution $u(T,x)$ immediately.
\end{proof}

\begin{remark}
If function $\chi(T)$ has more than one positive root, then, any of root $T$ gives a stable approximate image $u(T,x)$. In practice, one can choose $T^*=\inf_{\chi(T)=0} T$, i.e. $\sigma(T)<\delta$ for all $T<T^*$ and $\sigma(T^*)=\delta$. In other words, $T^*$ is the first time point for which the tolerability ratio $\sigma(T)$ coincides with the data error.
\end{remark}

\section{A discrete damped flow.}

Loosely speaking, the damped flow (\ref{SecondOrderFlow}) with an appropriate numerical discretization yields a discrete second order regularization method. Just like the Runge-Kutta integrators~\cite{Rieder-2005} or the exponential integrators~\cite{Hochbruck-1998} for solving first order equations, the damped symplectic integrators are extremely attractive for solving second order equations (\ref{SecondOrderFlow}), since the schemes are closely related to the canonical transformations~\cite{Hairer-2006}, and the trajectory of the discretized second flows are usually more stable. In this section, based on the St\"{o}rmer-Verlet method, we develop a discrete damped flow for image denoising.

For simplicity and clarity of statements, let $\Omega$ denote a rectangle region in $\mathbf{R}^2$, and let us consider a uniform grid $\Omega_{MN}=\{(x_i,y_j)\}^{M,N}_{i,j=1}$ in $\Omega$ with the uniform step size $h=x_{i+1}-x_i=y_{j+1}-y_j$. Define $\mathbf{u}(t)=[u(x_i,y_j,t)]^{M,N}_{i,j=1}$. Denote $\mathbf{u}^{k}$ as the projection of $u(x,y,t)$ at the spacial grid $\Omega_{MN}$ and time point $t=t_k$. We approximate the $\textmd{div}\left( a^{\varepsilon} (u) \nabla u \right)$ by a linear one~-- $\textmd{div}\left( a^{\varepsilon}(\mathbf{u}^{k-1}) \nabla \mathbf{u}^{k} \right)$, where $a^{\varepsilon}(u)$ is defined in (\ref{a}). Using the central difference discretization rule, we have
\begin{equation}\label{linearApproximation0}
\begin{array}{rl}
& \textmd{div}\left( a(\mathbf{u}^{\varepsilon, k-1}) \nabla \mathbf{u}^{k} \right) = D_{x,\frac{h}{2}} \left( a^{\varepsilon, k-1}_{i,j} D_{x,\frac{h}{2}}  \mathbf{u}^{k}_{i,j} \right) + D_{y,\frac{h}{2}} \left( a^{\varepsilon, k-1}_{i,j} D_{y,\frac{h}{2}}  \mathbf{u}^{k}_{i,j} \right)  \\ & =
D_{x,\frac{h}{2}} \left( a^{\varepsilon, k-1}_{i,j} \frac{\mathbf{u}^{k}_{i+\frac{1}{2},j} - \mathbf{u}^{k}_{i-\frac{1}{2},j}}{h} \right) + D_{y,\frac{h}{2}} \left( a^{\varepsilon, k-1}_{i,j} \frac{\mathbf{u}^{k}_{i,j+\frac{1}{2}} - \mathbf{u}^{k}_{i,j-\frac{1}{2}}}{h} \right)  \\ & = \frac{1}{h^2} \Big\{ a^{\varepsilon, k-1}_{i-\frac{1}{2},j} \mathbf{u}^{k}_{i-1,j} +  a^{\varepsilon, k-1}_{i,j-\frac{1}{2}} \mathbf{u}^{k}_{i,j-1} - \left( a^{\varepsilon, k-1}_{i-\frac{1}{2},j} + a^{\varepsilon, k-1}_{i,j-\frac{1}{2}} + a^{\varepsilon, k-1}_{i+\frac{1}{2},j} + a^{\varepsilon, k-1}_{i,j+\frac{1}{2}} \right) \mathbf{u}^{k}_{i,j} + a^{\varepsilon, k-1}_{i,j+\frac{1}{2}} \mathbf{u}^{k}_{i,j+1} +  a^{\varepsilon, k-1}_{i+\frac{1}{2},j} \mathbf{u}^{k}_{i+1,j} \Big\} ,
\end{array}
\end{equation}
where
\begin{equation}\label{Linear_a}
a^{\varepsilon, k-1}_{i-\frac{1}{2},j} = \left( \varepsilon + |\nabla \mathbf{G}_{\sigma}\star \mathbf{u}^{k-1}_{i-\frac{1}{2},j} |^{2} \right)^{\frac{p-2}{2}} .
\end{equation}
Here we use $\mathbf{u}^{k-1}_{i-\frac{1}{2},j} = \frac{\mathbf{u}^{k-1}_{i-1,j} + \mathbf{u}^{k-1}_{i,j}}{2}$ to approximate $\mathbf{u}^{k-1}_{i-\frac{1}{2},j}$ in $a^{\varepsilon, k-1}_{i-\frac{1}{2},j}$ and
$\nabla \mathbf{G}_{\sigma}$ is the project of function $\nabla G_{\sigma}$ on the same grid $\Omega_{MN}$.

\begin{definition}
Given a matrix $\mathbf{u}\in \mathbf{R}^{M}\times \mathbf{R}^{N}$, one can obtain a vector $\vec{\mathbf{u}}\in \mathbf{R}^{MN}$ by stacking the columns of $\mathbf{u}$. This defines a linear operator $vec: \mathbf{R}^{M}\times \mathbf{R}^{N} \to \mathbf{R}^{MN}$,
\begin{eqnarray*}
vec(\mathbf{u}) = (\mathbf{u}_{1,1}, \mathbf{u}_{2,1}, \cdot\cdot\cdot, \mathbf{u}_{M,1}, \mathbf{u}_{1,2}, \mathbf{u}_{2,2}, \cdot\cdot\cdot, \mathbf{u}_{M,1}, \cdot\cdot\cdot, \mathbf{u}_{1,N}, \mathbf{u}_{2,N}, \cdot\cdot\cdot, \mathbf{u}_{M,N})^T , ~
\vec{\mathbf{u}} = vec(\mathbf{u}), ~ \vec{\mathbf{u}}_q = \mathbf{u}_{i,j},
\end{eqnarray*}
where $q=(i-1)M+j$.
This corresponds to a lexicographical column ordering of the components in the matrix $\mathbf{u}$. The symbol $array$ denotes the inverse of the $vec$ operator. That means
\begin{eqnarray*}\label{array}
array(vec(\mathbf{u})) = \mathbf{u}, \quad vec(array(\vec{\mathbf{u}})) = \vec{\mathbf{u}},
\end{eqnarray*}
whenever $\mathbf{u}\in \mathbf{R}^{M}\times \mathbf{R}^{N}$ and $\vec{\mathbf{u}}\in \mathbf{R}^{MN}$.
\end{definition}

Based on the above definition, rewrite (\ref{linearApproximation0}) as the matrix form, $\mathbf{F}^{k-1} \vec{\mathbf{u}}^{k}$, where the matrix $\mathbf{F}^{k-1}$ is dependent only on $\vec{\mathbf{u}}^{k-1}$.

\begin{proposition}\label{NegativeEigen}
All eigenvalues of $\mathbf{F}^{k}$ ($k=2, \cdots$) are non-positive.
\end{proposition}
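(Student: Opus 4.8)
The plan is to recognize that, for each fixed admissible index $k$, the matrix $\mathbf{F}^{k}$ is \emph{symmetric} and \emph{negative semi-definite}; since a real symmetric matrix has only real eigenvalues and negative semi-definiteness is exactly the assertion that they are all non-positive, this settles the proposition. I would fix an arbitrary $k$ once and for all, because $\mathbf{F}^{k}$ is assembled purely from the already-computed data $\vec{\mathbf{u}}^{k}$ through the stencil (\ref{linearApproximation0}), and the argument is completely uniform in $k$.

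First I would verify symmetry. By (\ref{linearApproximation0}) the off-diagonal entry of $\mathbf{F}^{k}$ coupling the node $(i,j)$ to its horizontal neighbour $(i+1,j)$ is $\frac{1}{h^{2}}a^{\varepsilon,k}_{i+1/2,j}$, while the entry coupling $(i+1,j)$ back to $(i,j)$ is $\frac{1}{h^{2}}a^{\varepsilon,k}_{(i+1)-1/2,j}$; by the definition of the half-point coefficient in (\ref{Linear_a}) these are literally the same number, and likewise in the $y$-direction. Hence $\mathbf{F}^{k}=(\mathbf{F}^{k})^{T}$. I would also record here the only analytic input needed, namely that every coefficient $a^{\varepsilon,k}_{\cdot}=(\varepsilon+|\cdot|^{2})^{(p-2)/2}$ is strictly positive because $\varepsilon>0$.

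The core step is a discrete summation-by-parts identity, the exact analogue of the integration by parts $\int_{\Omega}\textmd{div}(a^{\varepsilon}\nabla u)\,u\,dx=-\int_{\Omega}a^{\varepsilon}|\nabla u|^{2}\,dx$ used in Section 2. Pairing $\mathbf{F}^{k}$ with an arbitrary vector $\vec{\mathbf{u}}=vec(\mathbf{u})$ and regrouping the five-point stencil edge by edge, I expect
\begin{equation*}
\langle \vec{\mathbf{u}},\,\mathbf{F}^{k}\vec{\mathbf{u}}\rangle
= -\frac{1}{h^{2}}\sum_{i,j} a^{\varepsilon,k}_{i+1/2,j}\bigl(\mathbf{u}_{i+1,j}-\mathbf{u}_{i,j}\bigr)^{2}
  -\frac{1}{h^{2}}\sum_{i,j} a^{\varepsilon,k}_{i,j+1/2}\bigl(\mathbf{u}_{i,j+1}-\mathbf{u}_{i,j}\bigr)^{2}.
\end{equation*}
Each coefficient is positive, so every summand is non-negative and $\langle \vec{\mathbf{u}},\mathbf{F}^{k}\vec{\mathbf{u}}\rangle\le 0$ for all $\vec{\mathbf{u}}$; thus $\mathbf{F}^{k}$ is negative semi-definite and its (real) eigenvalues are non-positive. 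Equivalently, one may observe that $-\mathbf{F}^{k}$ is precisely the weighted graph Laplacian of the grid with edge weights $a^{\varepsilon,k}_{\cdot}$, which is automatically positive semi-definite; or invoke Gershgorin's theorem, noting that $-\mathbf{F}^{k}$ has positive diagonal entries $d_{q}$, non-positive off-diagonal entries, and is weakly diagonally dominant (for an interior node the diagonal magnitude equals the sum of the four off-diagonal magnitudes), so each disc is centred at $d_{q}>0$ with radius $R_{q}\le d_{q}$ and hence lies in $[0,\infty)$.

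The one point requiring genuine care is the boundary. The homogeneous Neumann condition $\partial_n u=0$ must be encoded in the stencil at $\partial\Omega$ (by the usual reflection, i.e. dropping the exterior ghost fluxes), and I would check that this is done so that symmetry is preserved and no stray boundary term survives the summation-by-parts, the boundary nodes simply losing some neighbour weights. In the Gershgorin picture this is exactly what makes the boundary rows \emph{strictly} diagonally dominant ($R_{q}<d_{q}$), which only reinforces the containment $[d_{q}-R_{q},d_{q}+R_{q}]\subseteq[0,\infty)$. Hence the boundary treatment can only improve the estimate, and the non-positivity of the spectrum of $\mathbf{F}^{k}$ holds in every case.
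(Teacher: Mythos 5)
Your proof is correct, and it in fact contains the paper's argument as your closing alternative: the paper's own proof simply observes that $\mathbf{F}^{k}$ is symmetric and (weakly) diagonally dominant with non-positive diagonal entries, and then invokes Gershgorin's circle theorem to place every real eigenvalue in an interval $\left[ [\mathbf{F}^{k}]_{\nu,\nu} - R_\nu,\ [\mathbf{F}^{k}]_{\nu,\nu} + R_\nu \right] \subseteq (-\infty,0]$. Your primary route is genuinely different: the discrete summation-by-parts identity exhibiting $\langle \vec{\mathbf{u}}, \mathbf{F}^{k}\vec{\mathbf{u}}\rangle$ as a negatively weighted sum of squared differences, i.e.\ recognizing $-\mathbf{F}^{k}$ as a weighted graph Laplacian with edge weights $a^{\varepsilon,k}_{\cdot}/h^{2}>0$. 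This buys more than the paper's argument: it is structural rather than entry-wise, it mirrors the continuous integration by parts used in Section 2 (and hence connects the discrete scheme to the energy estimates there), it identifies the kernel (constant vectors, consistent with the Neumann condition and with the equilibria discussed in Section 3), and it is robust to perturbations of the stencil that would destroy row-wise dominance. The paper's Gershgorin argument is shorter but yields only the spectral inclusion.

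One small correction on your boundary discussion: under the standard flux-form (reflection) treatment of the homogeneous Neumann condition, a boundary node loses the exterior flux from the \emph{diagonal} as well as from the off-diagonal entries, so the boundary rows remain weakly diagonally dominant with equality, $R_q = d_q$, not strictly dominant as you assert. This is harmless for your conclusion --- weak dominance (equivalently, the boundary edges simply disappearing from your quadratic-form sum) already gives the containment in $[0,\infty)$ for $-\mathbf{F}^{k}$ --- but the claim that the boundary ``reinforces'' the estimate is not accurate.
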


\begin{proof}
By the definition of $\mathbf{F}^{k}$, it is not difficult to show that $\mathbf{F}^{k}$ is a symmetriccal and diagonally dominant matrix. Then, all eigenvalues of $\mathbf{F}^{k}$ ($k=2, \cdots$) are real and, by Gershgorin's circle theorem, for each eigenvalue $\lambda$ an index $\nu$ exists such that:
\begin{eqnarray*}
\lambda \in \left[ [\mathbf{F}^{k}]_{\nu,\nu} - \sum^{MN}_{\imath \neq \nu} |[\mathbf{F}^{k}]_{\nu,\imath}|, [\mathbf{F}^{k}]_{\nu,\nu} + \sum^{MN}_{\imath \neq \nu} |[\mathbf{F}^{k}]_{\nu,\imath}| \right],
\end{eqnarray*}
which implies, by definition of the diagonal dominance, $\lambda\leq 0$. Here, $[\mathbf{F}^{k}]_{\nu,\imath}$ denotes the element of the matrix $\mathbf{F}^{k}$ at the position $(\nu,\imath)$.
\end{proof}

Denote $\vec{\mathbf{v}}^k=\frac{d \vec{\mathbf{u}}^k}{dt}$. In this work, the St\"{o}rmer-Verlet method is employed to solve PDE (\ref{SecondOrderFlow}), namely
\begin{equation}\label{symplectic}
\left\{\begin{array}{l}
\vec{\mathbf{v}}^{k+\frac{1}{2}} = \vec{\mathbf{v}}^{k} + \frac{\Delta t_k}{2} \left( \mathbf{F}^{k-1} \vec{\mathbf{u}}^{k} - \eta \vec{\mathbf{v}}^{k+\frac{1}{2}} \right), \\
\vec{\mathbf{u}}^{k+1} = \vec{\mathbf{u}}^{k} + \Delta t_k \vec{\mathbf{v}}^{k+\frac{1}{2}}, \\
\vec{\mathbf{v}}^{k+1} = \vec{\mathbf{v}}^{k+\frac{1}{2}} + \frac{\Delta t_k}{2} \left( \mathbf{F}^{k} \vec{\mathbf{u}}^{k+1} - \eta \vec{\mathbf{v}}^{k+\frac{1}{2}} \right), \\
\vec{\mathbf{u}}_{0}=\vec{\mathbf{u}}^\delta_0, \vec{\mathbf{v}}_0=0,
\end{array}\right.
\end{equation}
where $\vec{\mathbf{u}}^\delta_0= vec(\mathbf{u}^\delta_0)$ and $\mathbf{u}^\delta_0$ is the project of $u^\delta_0(x)$ on the grid $\Omega_{MN}$.

Now, we are in a position to give a numerical analysis for the scheme (\ref{symplectic}).

Denote by $\mathbf{z}^k=(\vec{\mathbf{u}}^{k};\vec{\mathbf{v}}^k)$, and $\mathbf{E}$ the identity matrix of size $MN$, then, equation (\ref{symplectic}) can be rewritten as
\begin{equation}\label{ODE2}
\mathbf{z}^{k+1} = \mathbf{B}^{k} \mathbf{A}^{k-1} \mathbf{z}^k,
\end{equation}
where
\begin{equation}\label{B1}
\mathbf{A}^{k-1} = \frac{2}{2+ \eta \Delta t_k} \left( \begin{array}{cc}
\left(1+\frac{\eta \Delta t_k}{2} \right) \mathbf{E} + \frac{\Delta t_k^2}{2} \mathbf{F}^{k-1}  & \Delta t_k \mathbf{E} \\
\frac{\Delta t_k}{2} \mathbf{F}^{k-1}  & \mathbf{E}
\end{array}\right),~ \mathbf{B}^{k} = \left( \begin{array}{cc}
\mathbf{E} & \mathbf{0} \\
\frac{\Delta t_k}{2} \mathbf{F}^{k}  & \left(1-\frac{\eta \Delta t_k}{2} \right) \mathbf{E} + \frac{\Delta t_k^2}{2} \mathbf{F}^{k}
\end{array}\right).
\end{equation}

\begin{theorem}
(Boundedness) If
\begin{equation}\label{parametersDt}
\Delta t_k \leq \min\left\{ \frac{\eta}{\sqrt{\lambda^{(k)}_{max}}}, \sqrt{\frac{8}{\lambda^{(k)}_{max}} + \left(\frac{\eta}{\lambda^{(k)}_{max}} \right)^2 }  - \frac{\eta}{\lambda^{(k)}_{max}} \right\}
\end{equation}
then, the scheme (\ref{symplectic}) is uniformly bounded.
\end{theorem}

\begin{proof}
By Proposition \ref{NegativeEigen}, all the eigenvalues of $\mathbf{F}^{k}$ are non-positive. By noting that $\mathbf{F}^{k}$ is a symmetrical matrix, there exists a decomposition $\mathbf{F}^{k}=\Phi^{k} \Lambda^k (\Phi^{k})^T$, where $\Phi^{k}$ is an unitary matrix and $\Lambda^k=-\textmd{diag}(\lambda^{(k)}_i)$, where $\lambda^{(k)}_i\geq0$, $i=1, \cdots, MN$.

It is well known that, a sufficient condition for the boundedness of a dynamical system is $\|\mathbf{B}^{k} \mathbf{A}^{k-1}\|_2 \leq1$, i.e. the composite mapping $\mathbf{B}^{k} \mathbf{A}^{k-1}$ is non-expansive. By the directly calculation, the eigenvalues of matrices $\mathbf{B}^{k}$ are
\begin{eqnarray*}
\nu^{(k)}_i(\mathbf{B}^{k})= 1- \frac{\eta \Delta t_k}{2} - \frac{\Delta t^2_k}{2} \lambda^{(k)}_i \textmd{~for~} i=1, \cdots, MN, \quad \textmd{~and~} \nu^{(k)}_i\equiv1 \textmd{~for~} i=1+MN, \cdots, 2MN,
\end{eqnarray*}
which implies that $|\nu^{(k)}_i(\mathbf{B}^{k})|\leq1$ for all $i,k$ by estimate (\ref{parametersDt}). Therefore, using the relation $\|\mathbf{B}^{k} \mathbf{A}^{k-1}\|_2 \leq \|\mathbf{B}^{k}\|_2 \|\mathbf{A}^{k-1}\|_2= \|\mathbf{A}^{k-1}\|_2$ it is sufficient to show that for the given time step size $\Delta t_k$ in (\ref{parametersDt}), the corresponding eigenvalues of $\mathbf{A}^{k-1}$ are not greater than the unit.

The eigenvalues of matrices $\mathbf{A}^{k-1}$ are
\begin{eqnarray*}
\mu^{(k-1)}_{i,\pm} (\mathbf{A}^{k-1})= \frac{2}{2+ \eta \Delta t_k} \left(  1 + \frac{\Delta t_k}{4} \left( \eta - \Delta t_k \lambda^{(k-1)}_i \pm \sqrt{ \left( \eta - \Delta t_k \lambda^{(k-1)}_i \right)^2 - 8 \lambda^{(k-1)}_i} \right) \right).
\end{eqnarray*}

Now, we have to show that for all $k$: $|\mu^{(k-1)}_{max} (\eta, \Delta t_k^{(k-1)})| \leq 1$ for the parameter $\Delta t_k^{(k-1)}$ defined by (\ref{parametersDt}).

For simplicity, we ignore the superscript $^{(k-1)}$ from now on. Denote by $i_*$ the index of $\lambda_{i_*}$, corresponding the maximal absolute value of $\mu^{(k-1)}_{i,\pm} (\mathbf{A}^{k-1})$, i.e.
\begin{eqnarray*}
|\mu_{max}| = \frac{2}{2+ \eta \Delta t_k} \max_{+,-} \left| 1 + \frac{\Delta t_k}{4} \left( \eta - \Delta t_k \lambda_{i_*} \pm \sqrt{ \left( \eta - \Delta t_k \lambda_{i_*} \right)^2 - 8 \lambda_{i_*}} \right) \right|.
\end{eqnarray*}

If $\lambda_{i_*}=0$, the theorem holds, obviously, since $|\mu_{max}|\equiv1$ in this case.

Now, consider the case when $\lambda_{i_*}>0$. There are three possible cases here: the overdamped case ($ \left( \eta - \Delta t_k \lambda_{i_*} \right)^2 > 8 \lambda_{i_*}$), the underdamped case ($ \left( \eta - \Delta t_k \lambda_{i_*} \right)^2 < 8 \lambda_{i_*}$), and the critical damped case ($ \left( \eta - \Delta t_k \lambda_{i_*} \right)^2 = 8 \lambda_{i_*}$). Let us consider these cases respectively.

For the chosen time step size $\Delta t_k$ in (\ref{parametersDt}), we have $\eta - \Delta t_k \lambda_{i_*}\geq0$. Therefore, for the overdamped case,
\begin{eqnarray*}
|\mu_{max}| = \frac{2}{2+ \eta \Delta t_k} \left( 1 + \frac{\Delta t_k}{4} \left( \eta - \Delta t_k \lambda_{i_*} + \sqrt{ \left( \eta - \Delta t_k \lambda_{i_*} \right)^2 - 8 \lambda_{i_*}} \right) \right)
\end{eqnarray*}

Define $\eta - \Delta t_k \lambda_{i_*}=a \sqrt{8\lambda_{i_*}}$ ($a>1$), and we have
\begin{eqnarray*}
|\mu_{max}| = \frac{1 + \frac{\Delta t_k}{4} (a + \sqrt{a^2-1}) \sqrt{8\lambda_{i_*}}}{1 + \frac{\Delta t_k}{2} \eta}.
\end{eqnarray*}

Substituting $\eta = \Delta t_k \lambda_{i_*}+a \sqrt{8\lambda_{i_*}}$ in the above equation, we can deduce that
\begin{eqnarray*}
|\mu_{max}| = \frac{1 + \frac{\Delta t_k}{4} (a + \sqrt{a^2-1}) \sqrt{8\lambda_{i_*}}}{1 + \frac{\Delta t_k}{2} (\Delta t_k \lambda_{i_*}+a \sqrt{8\lambda_{i_*}})}
\leq \frac{1 + \frac{\Delta t_k}{2} a \sqrt{8\lambda_{i_*}}}{1 + \frac{\Delta t_k}{2} (\Delta t_k \lambda_{i_*}+a \sqrt{8\lambda_{i_*}})} <1.
\end{eqnarray*}

Now, consider the underdamped case. The complex eigenvalue $\mu_{max}$ satisfies
\begin{eqnarray*}
|\mu_{max}|^2  = \frac{1 + \frac{\Delta t_k}{2} (\eta - \Delta t_k \lambda_{i_*}) + (\frac{\Delta t_k}{4} )^2 8\lambda_{i_*}}{(1 + \frac{\Delta t_k}{2} \eta)^2}.
\end{eqnarray*}
Similarly, if we define $\eta - \Delta t_k \lambda_{i_*}=a \sqrt{8\lambda_{i_*}}$ with $a<1$, we have
\begin{eqnarray*}
|\mu_{max}|^2  = \frac{1 + \frac{\Delta t_k}{2} a \sqrt{8\lambda_{i_*}} + (\frac{\Delta t_k}{4})^2 8\lambda_{i_*}}{(1 + \frac{\Delta t_k}{2} (\Delta t_k \lambda_{i_*}+a \sqrt{8\lambda_{i_*}}))^2}
=  \frac{1 + a \Delta t_k \sqrt{2\lambda_{i_*}} + \frac{\Delta t_k^2}{2} \lambda_{i_*}}{1+\frac{\Delta t_k^4}{4}\lambda^2_{i_*} + (2 a^2 + 1 ) \Delta t_k^2 \lambda_{i_*} + 2a\Delta t_k\sqrt{2\lambda_{i_*}} + a \Delta t_k^3 \lambda_{i_*} \sqrt{2\lambda_{i_*}}} <1
\end{eqnarray*}

Finally, consider the critical damped case. In this case,
\begin{eqnarray*}
|\mu_{max}| = \frac{1 + \frac{\Delta t_k}{4} \sqrt{8\lambda_{i_*}}}{1 + \frac{\Delta t_k}{2} \eta} = \frac{1 + \frac{\Delta t_k}{4} \sqrt{8\lambda_{i_*}}}{1 + \frac{\Delta t_k}{2} (\Delta t_k \lambda_{i_*} + \sqrt{8\lambda_{i_*}} )}<1,
\end{eqnarray*}
which completes the proof.
\end{proof}

By Taylor's theorem and the finite difference formula, it is not difficult to show the consistency of the scheme (\ref{symplectic}). It is well known that boundedness implies the convergence of consistent schemes for any (especially nonlinear) problem, namely, the following theorem holds~\cite{Tadmor}.

\begin{theorem}
(Convergence) The scheme (\ref{symplectic}) is convergent if the time step size is chosen by the criterion (\ref{parametersDt}).
\end{theorem}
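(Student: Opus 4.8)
The plan is to prove convergence through the classical consistency-plus-stability paradigm, i.e.\ the nonlinear analogue of the Lax--Richtmyer equivalence theorem invoked via \cite{Tadmor}. Stability is already in hand: the preceding Boundedness theorem guarantees $\|\mathbf{B}^{k}\mathbf{A}^{k-1}\|_2 \le 1$ under the step-size restriction (\ref{parametersDt}). The only missing ingredient is consistency, after which the two are combined through a discrete Gr\"onwall argument that carefully accounts for the nonlinear coupling entering through the coefficient matrix $\mathbf{F}^{k-1}$.

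First I would verify consistency. The St\"ormer--Verlet update (\ref{symplectic}) is a second-order symplectic integrator in time, so substituting the exact (grid-projected) solution $\mathbf{z}(t_k)=(\vec{\mathbf{u}}(t_k);\vec{\mathbf{v}}(t_k))$ into the one-step recursion (\ref{ODE2}) and Taylor expanding about $t_k$ produces a temporal defect of order $O(\Delta t_k^{2})$, while the central-difference stencil (\ref{linearApproximation0}) approximates $\textmd{div}(a^{\varepsilon}\nabla u)$ to order $O(h^{2})$. Hence the local truncation error $\tau^{k}$ satisfies $\|\tau^{k}\| = O(\Delta t_k^{2}+h^{2}) \to 0$; this is the ``not difficult'' Taylor-theorem computation flagged in the text, and it supplies consistency.

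Next I would form the error recursion. Writing $\mathbf{e}^{k}=\mathbf{z}^{k}-\mathbf{z}(t_k)$ and subtracting the exact-solution identity from (\ref{ODE2}), I obtain
\begin{equation*}
\mathbf{e}^{k+1} = \mathbf{B}^{k}\mathbf{A}^{k-1}\,\mathbf{e}^{k} + \tau^{k} + \mathbf{R}^{k},
\end{equation*}
where $\mathbf{R}^{k}$ collects the mismatch between the matrices $\mathbf{F}^{k-1},\mathbf{F}^{k}$ built from the computed iterates and those built from the exact solution. The crucial observation is that $a^{\varepsilon}$, and therefore $\mathbf{F}$, is Lipschitz in its argument: the bound $\|a_1-a_2\|_{L^{\infty}(\Omega)}\le C_{p,\sigma}\|u_1-u_2\|_{L^{2}(\Omega)}$ established in the uniqueness part of Theorem \ref{MainThm}, read discretely on the grid $\Omega_{MN}$, yields $\|\mathbf{R}^{k}\| \le L\,(\|\mathbf{e}^{k}\| + \|\mathbf{e}^{k-1}\|)$ with $L$ independent of $k$ (using boundedness of the exact solution and its gradient). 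Taking norms, applying the stability estimate $\|\mathbf{B}^{k}\mathbf{A}^{k-1}\|_2\le 1$, and setting $E_k=\max_{j\le k}\|\mathbf{e}^{j}\|$ gives $E_{k+1} \le (1+L'\Delta t_k)E_{k} + \|\tau^{k}\|$; a discrete Gr\"onwall inequality then produces $\max_{k\Delta t_k\le T}\|\mathbf{e}^{k}\| \le C\,e^{L'T}(\Delta t^{2}+h^{2})\to 0$, which is convergence on $[0,T]$.

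The main obstacle is precisely the nonlinear feedback term $\mathbf{R}^{k}$. In the purely linear Lax setting, stability of the propagator together with consistency is immediate, but here the propagator $\mathbf{B}^{k}\mathbf{A}^{k-1}$ itself depends on the computed solution through $\mathbf{F}^{k-1}$, so stability alone does not control the error. The essential point that rescues the argument is that the Lipschitz estimate for $a^{\varepsilon}$ proven for the continuous uniqueness problem transfers \emph{verbatim} to the discrete matrices $\mathbf{F}^{k}$, so the nonlinearity contributes only an $O(\Delta t)$ perturbation to an otherwise non-expansive recursion, which the discrete Gr\"onwall lemma absorbs.
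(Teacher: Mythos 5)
Your proposal is correct and follows essentially the same route as the paper: the paper's entire ``proof'' consists of asserting consistency via Taylor's theorem, invoking the preceding Boundedness theorem for stability, and citing \cite{Tadmor} for the principle that boundedness plus consistency yields convergence for nonlinear schemes. Your error recursion, the Lipschitz transfer of $\|a_1-a_2\|_{L^\infty(\Omega)}\le C_{p,\sigma}\|u_1-u_2\|_{L^2(\Omega)}$ to the discrete matrices (which works uniformly in $h$ precisely because the nonlinearity enters only through the mollified quantity $\nabla G_\sigma \star u$ and acts on the smooth exact solution), and the discrete Gr\"onwall step are exactly the details that the paper's one-line citation leaves implicit.
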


\section{The SV-DDF algorithm .}
In this section, we propose an algorithm for image denoising. Various stopping criteria exist for an iteration algorithm (\cite{Gonzalez2007,Scherzer2009,Khanian2014}). In principle, the stopping criterion for image denoising problems should be proposed case by case. In real world problems, in order to obtain a high qualified denoised image, a manual stopping criterion is always required, especially for the PDE-based denoising technique. Nevertheless, an automatic stopping criterion can definitely help people to select a good initial guess of the denoised image.

In this paper, we adapt a frequency domain threshold method based on the fact that noise is usually represented by high frequencies in the frequency domain. To this end, define the high frequencies energy by
\begin{eqnarray*}
\Delta_{N_0} (\mathbf{u}) = \sum_{i+j\geq N_0} \left| \mathcal{F}(\mathbf{u}) (i,j) \right|^2,
\end{eqnarray*}
where $\mathcal{F}(\mathbf{u})$ denotes a 2D discrete Fourier transform of an image $\mathbf{u}$, and $N_0$ presents the high frequencies index. In the simulation, we set $N_0=\lfloor 0.6N^2 \rfloor$, where $\lfloor \cdot \rfloor$ denotes the floor function. Define by
\begin{eqnarray*}
RDE(k)=|\Delta_{N_0} (\mathbf{u}^k) - \Delta_{N_0} (\mathbf{u}^{k-1})| /  \Delta_{N_0} (\mathbf{u}^{k-1}).
\end{eqnarray*}
the relative denoising efficiency. Then, the value of $RDE$ at every iteration can be used as a stopping criterion. Based on this stopping criterion, an algorithm of SV-DDF for image denoising is proposed in Algorithm 1.

\begin{algorithm}[!htb]
\label{Alorithm1}
\caption{The SV-DDF for image denoising.}
\begin{algorithmic}[1]
\Require  Observed noisy image $u^\delta_0$. Parameters $\eta$ and $p$. Tolerance $\varepsilon_0$.

\Ensure A denoised image $\hat{u} \gets array(\vec{\mathbf{u}}^{k})$.

\State $\vec{\mathbf{u}}_{0} \gets vec(u_0)$, $\vec{\mathbf{v}}_{0} \gets 0$, $\Delta t_0 \gets \lambda_{max}(\mathbf{F}^0)$, $\mathbf{F}^{-1} \gets \mathbf{F}^0$, $RDE(0) \gets 1$, $k \gets 0$

\While{$RDE(k)>\varepsilon_0$}

\State $\vec{\mathbf{v}}^{k+\frac{1}{2}} \gets \left( 1+ \frac{\Delta t_k}{2} \eta \right)^{-1} \cdot \left( \vec{\mathbf{v}}^{k} + \frac{\Delta t_k}{2} \mathbf{F}^{k-1} \vec{\mathbf{u}}^{k} \right)$

\State $\vec{\mathbf{u}}^{k+1} \gets \vec{\mathbf{u}}^{k} + \Delta t_k \vec{\mathbf{v}}_{k+\frac{1}{2}}$

\State $\vec{\mathbf{v}}^{k+1} \gets \vec{\mathbf{v}}^{k+\frac{1}{2}} + \frac{\Delta t_k}{2} \left( \mathbf{F}^{k} \vec{\mathbf{u}}^{k+1} - \eta \vec{\mathbf{v}}^{k+\frac{1}{2}} \right)$

\State $k \gets k+1$

\State $RDE(k) \gets |\Delta_{N_0} (\mathbf{u}^k) - \Delta_{N_0} (\mathbf{u}^{k-1})| /  \Delta_{N_0} (\mathbf{u}^{k-1})$

\State $\Delta t_k \gets \lambda_{max}(\mathbf{F}^k)$

\EndWhile
\end{algorithmic}
\end{algorithm}

\section{Numerical experiments.}

In this section, several numerical examples are given to show the feasibility and efficiency of our proposed image denoising approach~-- SV-DDF (Algorithm 1).

Let $\bar{u}$ be the noise-free image, see Fig. \ref{orig}. In this paper, we consider two type of noise structure: (i) The uniformly distributed noise with noise level $\delta$ (cf. (a) in Fig. \ref{NoisyImages}). The noisy image is defined as $u^\delta_0(x) =\max \{0,[1+\delta\cdot(2\,\textrm{rand}(x)-1)]\, \bar{u}(x) \}$, $x\in\Omega$, where ``rand'' returns a pseudo-random value drawn from a uniform distribution on $[0, 1]$. (ii) The salt and pepper dominated noise (cf. (b) in Fig. \ref{NoisyImages}). It should be noted that for images with purely salt and pepper noise, specific approaches, such as median filtering methods (\cite{Lim1990,Juhola1991,Eng2001}), etc., work better than our proposed SV-DDF.

\begin{figure}[!htb]
\centering
\subfigure[]{
\includegraphics[clip, trim=2.5in 2.5in 2.5in 0.5in, width=2.5in]{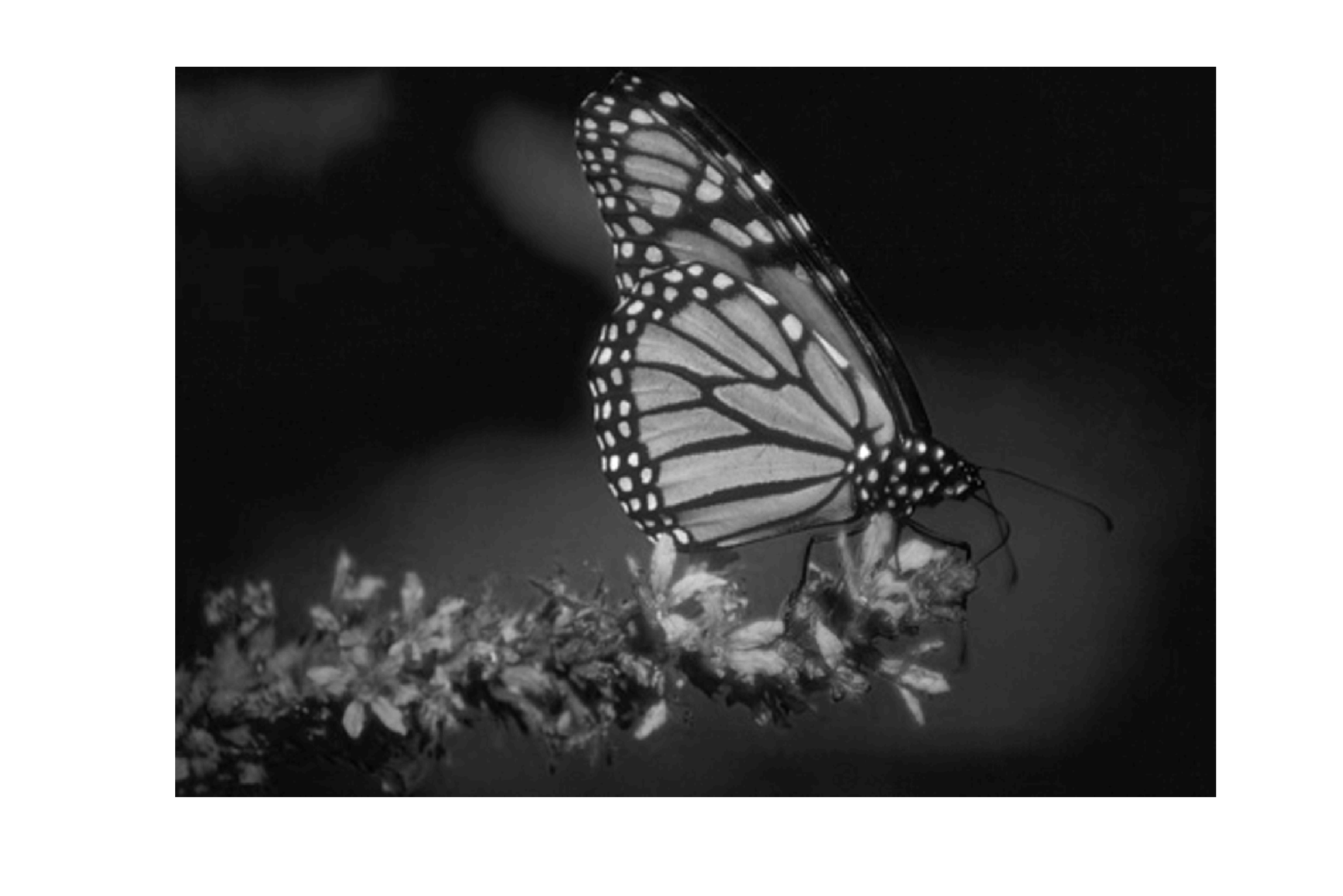}}
\subfigure[]{
\includegraphics[clip, trim=0.5in 0.5in 0in 0in, width=3.1in]{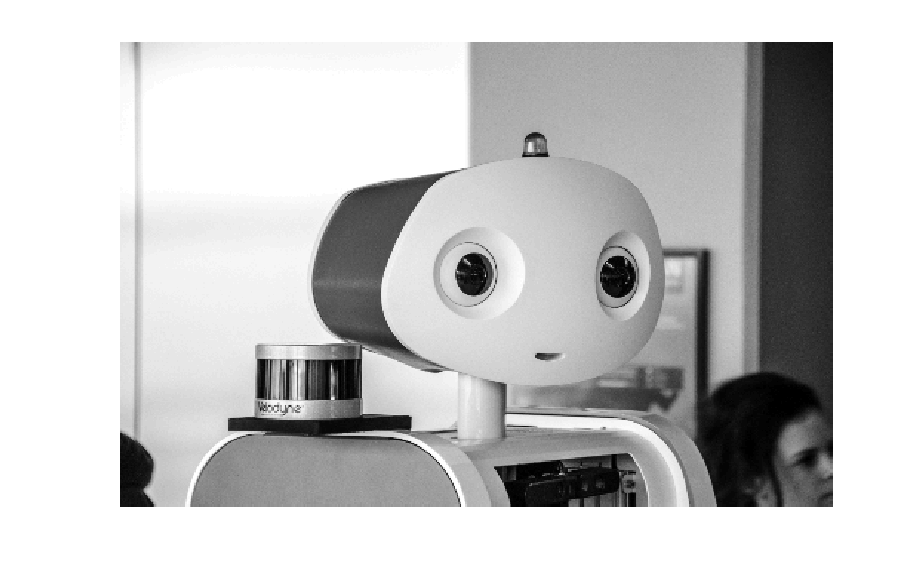}}
\caption{The two noise free images.}
\label{orig}
\end{figure}

\begin{figure}[!htb]
\centering
\subfigure[]{
\includegraphics[clip, trim=2.5in 2.5in 2.5in 0.5in, width=2.5in]{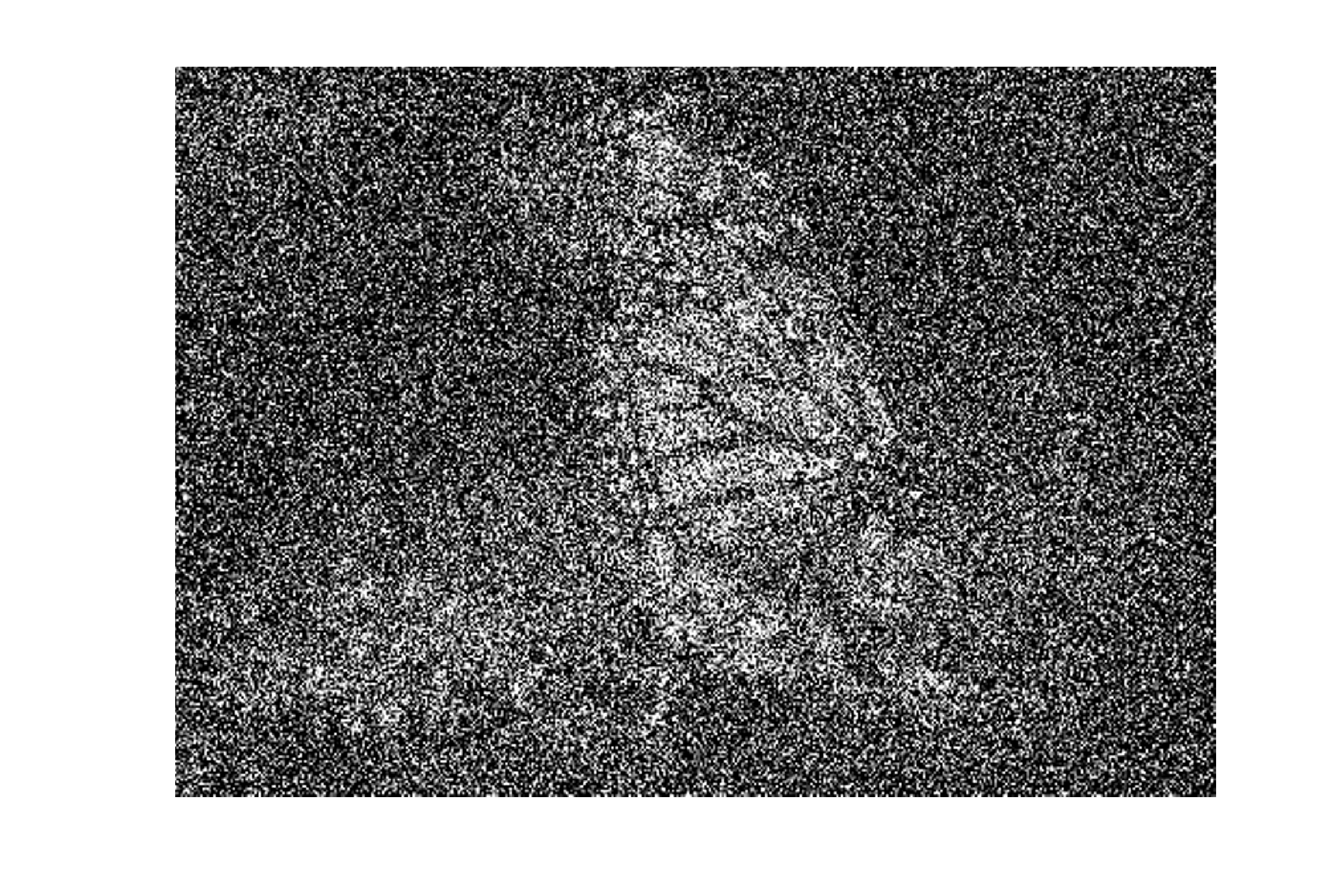}}
\subfigure[]{
\includegraphics[clip, trim=0.5in 0.5in 0in 0in, width=3.1in]{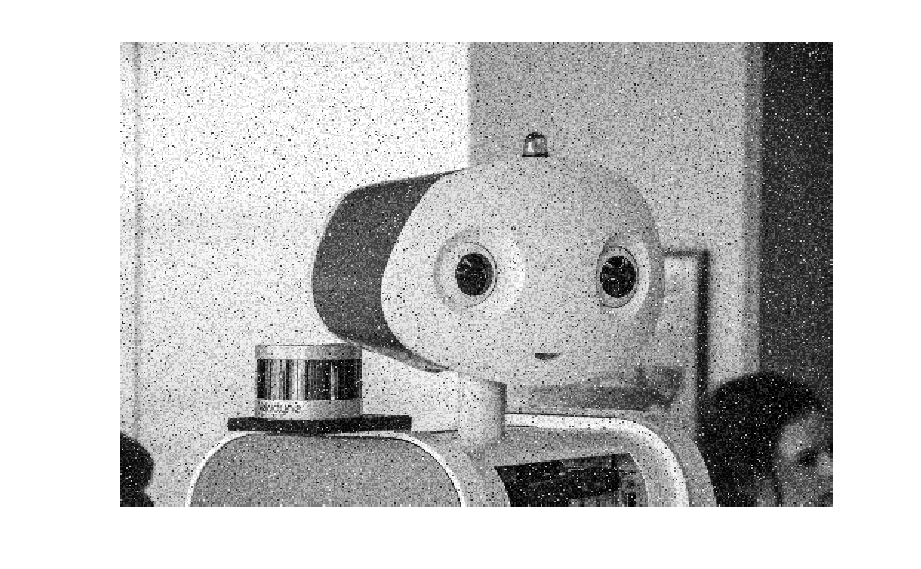}}
\caption{(a) Noisy image with uniformly distributed noise of level $\delta=54\%$. (b) Noisy image with salt and pepper dominated noise.}
\label{NoisyImages}
\end{figure}

To assess the accuracy of the denoised images we use the well known structural similarity error measure SSIM and the Peak Signal-to-Noise Ratio (PSNR) to obtain a quantitative estimate of the denoising performance (\cite{Wang-2004}).

\subsection{Influence of parameters}
The purpose of this paragraph is to explore the dependence of the accuracy of the denoised image with respect to the damping parameter $\eta$ and the method parameter $p$ in order to find proper values in practice. In all simulations below, we set $\epsilon=\sigma=0.001$. Numerical experiments indicate that small changes (less than $1\%$) of the values of $\epsilon$ and $\sigma$ do not significantly influence the output of our method. In Tab. \ref{TabDamping}, we display the results by using different values of the damping parameter $\eta$ and the method parameter $p$ for the two noisy test images in Fig. \ref{NoisyImages}. The results for the first test picture show that we obtain the best result for  $p=1$, $\eta=300$ for both evaluation criteria of SSIM and PSNR. In the simulations of the second test picture, we also found that the optimal choice of $p$ still equals 1. However, for PSNR, the optimal value of damping parameter is much greater that the optimal $\eta$ for the first test picture. In both cases, a large value of damping parameter will not decrease the quality of denoised images significantly. Therefore, we recommend to set $p=1$ and relative large value of damping parameter, e.g. $\eta=300$, as the initial guess of parameters for our method in practice.

\begin{table}[!htb]
\label{TabDamping}
\caption{Results with different damped parameters $\eta$  and $p$; the table shows the SSIM value and PSNR value.}
   \begin{center}
   \begin{tabular}{c|ccccccc}
   \hline\hline
    $p\setminus \eta$ & 0.001  & 1  & 100  & 300 & 600 &1500  & 3000 \\
    \hline
    Picture (a) & SSIM \\
    1 & 0.108 &  0.108 & 0.538 & 0.549 & 0.548 & 0.548 & 0.548  \\
    1.5 & 0.116 &  0.116 & 0.114 & 0.530 & 0.532 & 0.532 & 0.532  \\
    2 & 0.032 &  0.032 & 0.032 & 0.032 & 0.032 & 0.081 & 0.135  \\
    \hline
    Picture (a) & PSNR \\
    1 & 14.22 &  16.83 & 18.01 & 18.37 & 18.38 & 18.18 & 17.24  \\
    1.5 & 12.16 &  14.62 & 15.51 & 15.78 & 15.78 & 15.63 & 15.70  \\
    2 & 10.85 &  13.28 & 14.11 & 14.36 & 14.37 & 14.22 & 14.28  \\
    \hline\hline
    Picture (b) & SSIM\\
    1 & 0.366 &  0.374 & 0.482 & 0.697 & 0.752 & 0.754 & 0.753  \\
    1.5 & 0.403 & 0.487 & 0.541 & 0.683 & 0.748 & 0.748 & 0.747  \\
    2 & 0.107 &  0.108 & 0.291 & 0.359 & 0.383 & 0.403 & 0.462  \\
    \hline
    Picture (b) & PSNR\\
    1 & 17.26 &  24.79 & 26.86 & 26.84 & 21.81 & 19.29 & 17.92  \\
    1.5 & 14.95 & 20.59 & 22.14 & 22.13 & 18.36 & 16.46 & 15.44  \\
    2 & 13.58 &  18.85 & 20.30 & 20.29 & 16.76 & 15.00 & 14.74  \\
    \hline\hline
  \end{tabular}
  \end{center}
\end{table}

\subsection{Comparison with other state-of-the-art methods}

In order to show the advantages of our algorithm over existing approaches, we solve the same problem by the following methods: Total Variation (TV), Modified Telegraph (MTele, \cite{cao}), Telegraph (Tele, \cite{Ratner2013}), Total Generalized Variation of the second order (TGV, \cite{bredies2010total,Setzer2011}), and Median Filtering (MF, \cite{Lim1990}). In our MF method, each output pixel contains the median value in a 3-by-3 neighborhood around the corresponding pixel in the input image. In this group of simulations, we set $\eta= 300$ and $\eta= 1500$ for the first and second test pictures respectively. Moreover, $p=1$ for both test pictures.

Firstly, we compare the number of iterations required to reach a certain SSIM or PSNR by the five different iterative/dynamical denoising algorithms. The results are displayed in Table 2. Moreover, the evolutions of the SSIM value and PSNR value with respect to iterations for each method are shown in Figure \ref{evolutionMethods}, where one can see that unlike other four methods, whose SSIM and PSNR value are almost monotonic increasing, the SSIM/PSNR value of SV-DDF are oscillating during the evolutions. However, the trend of the SSIM/PSNR value for SV-DDF is to be a increasing function. By numerical simulations, which is omitted here, we found that the more oscillations of the SSIM/PSNR value of SV-DDF occur, the smaller the damping parameter $\eta$ in the model (\ref{SecondOrderFlow}) is. This is an expected result due to the behaviour of damped Hamiltonian systems.

\begin{table}[H]
\label{ComparisonIterations}
\caption{Comparison of methods for a set final value of SSIM and PSNR.}
   \begin{center}
   \begin{tabular}{c|ccccc}
   \hline
    & SV-DDF  & TV  & MTele  & Tele  &  TGV \\
   \hline
    Test picture (a) & Iterations \\
    \hline
    Initial SSIM (Reached SSIM) &  \\
    0.108 (0.400) & 253 &  484 & 447 & 421  & 576    \\
    Initial PSNR (Reached PSNR) &  \\
    12.07 (15.74) & 227 &  592 & 476 & 468  & 524    \\
    \hline
    Test picture (b) & Iterations \\
    \hline
    Initial SSIM (Reached SSIM) &  \\
    0.367 (0.435) & 64 &  326 & 167 & 154  & 397    \\
    Initial PSNR (Reached PSNR) &  \\
    18.81 (24.00) & 67 &  492 & 129 & 124  & 502    \\
    \hline
  \end{tabular}
  \end{center}
\end{table}

\begin{figure}[!htb]
\centering
\subfigure[]{
\includegraphics[width=0.48\textwidth]{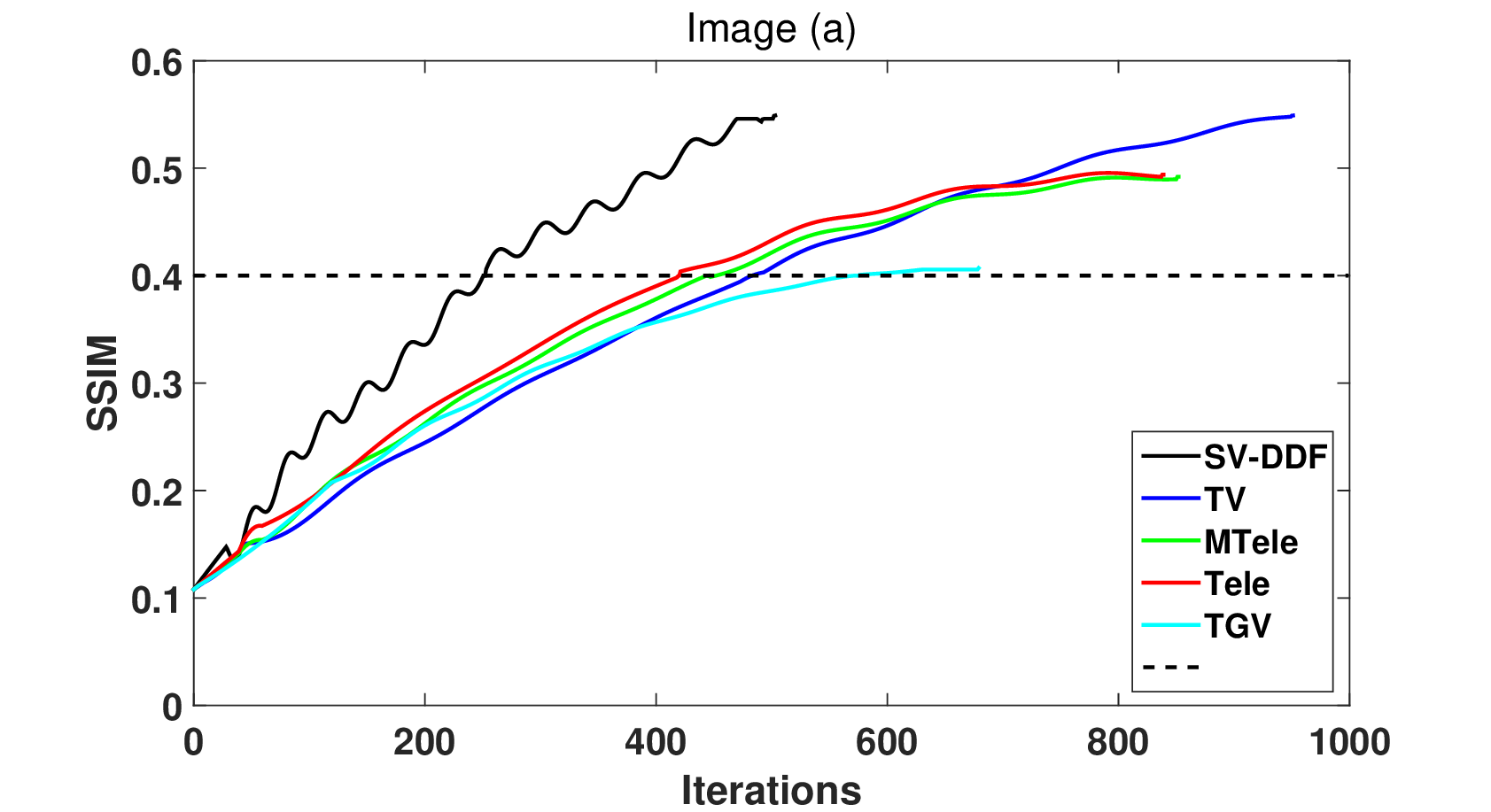}}
\subfigure[]{
\includegraphics[width=0.48\textwidth]{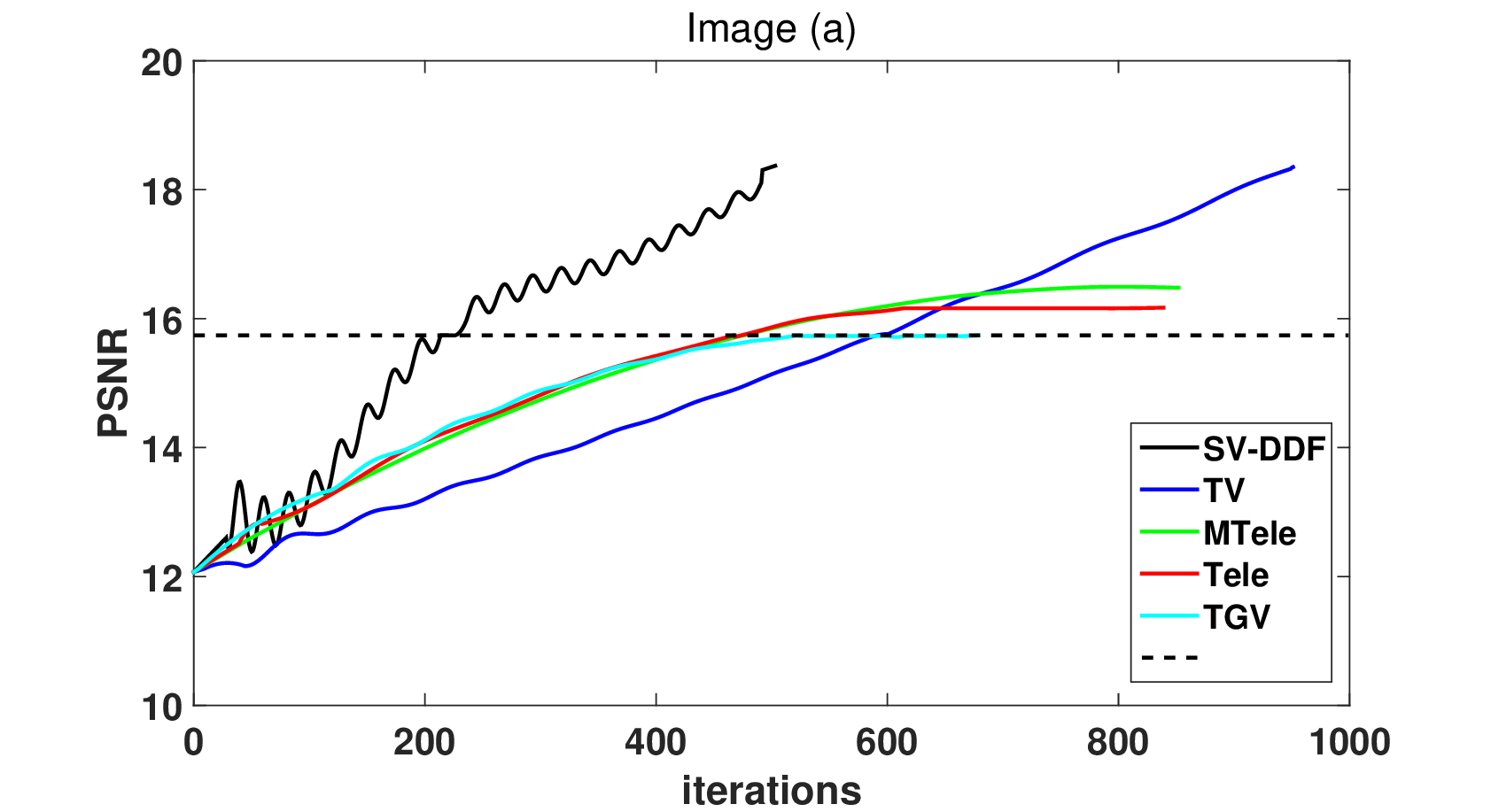}}
\subfigure[]{
\includegraphics[width=0.48\textwidth]{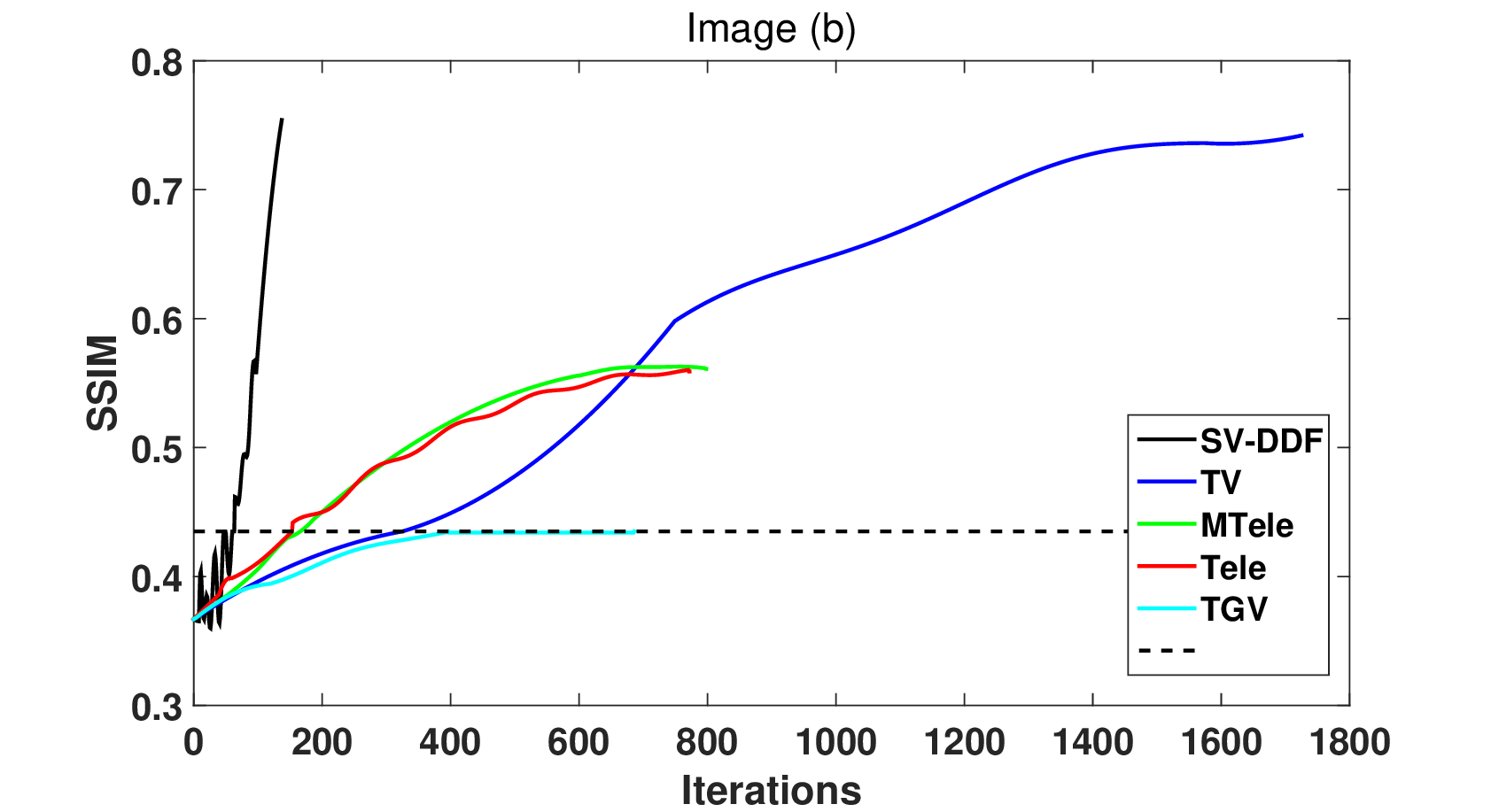}}
\subfigure[]{
\includegraphics[width=0.48\textwidth]{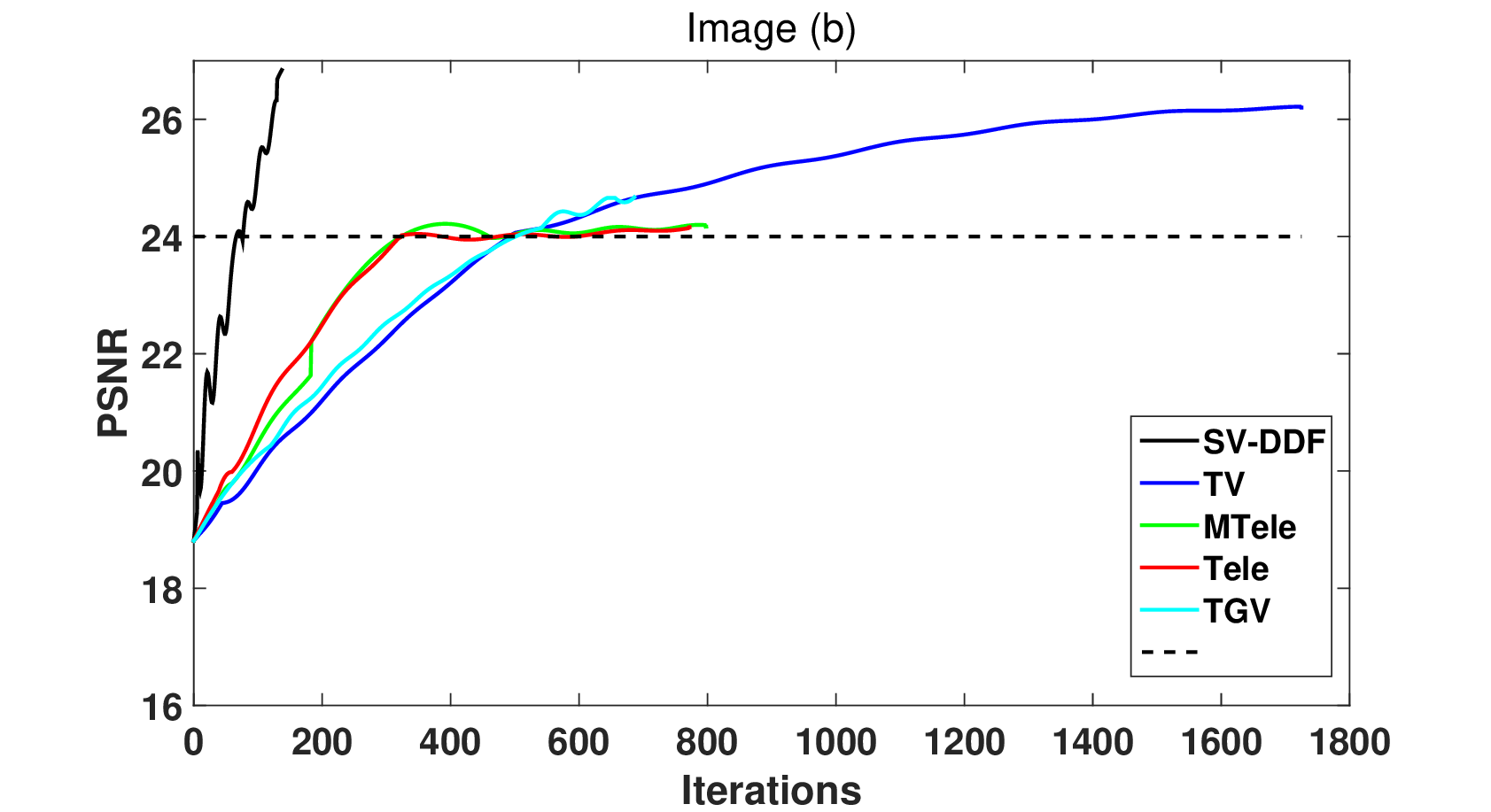}}
\caption{Evolutions of the SSIM value and PSNR value with respect to iterations for the five iterative denoising algorithms: SV-DDF, TV, MTele, Tele and TGV.}
\label{evolutionMethods}
\end{figure}

\renewcommand{\imsize}{0.25}
\begin{figure}[!t]
    \begin{tabularx}{\textwidth}{XX X}
      \toprule
    \end{tabularx}
    \includegraphics[clip, trim=0.8in 0.8in 0.8in 0.8in, width=\imsize\textwidth]{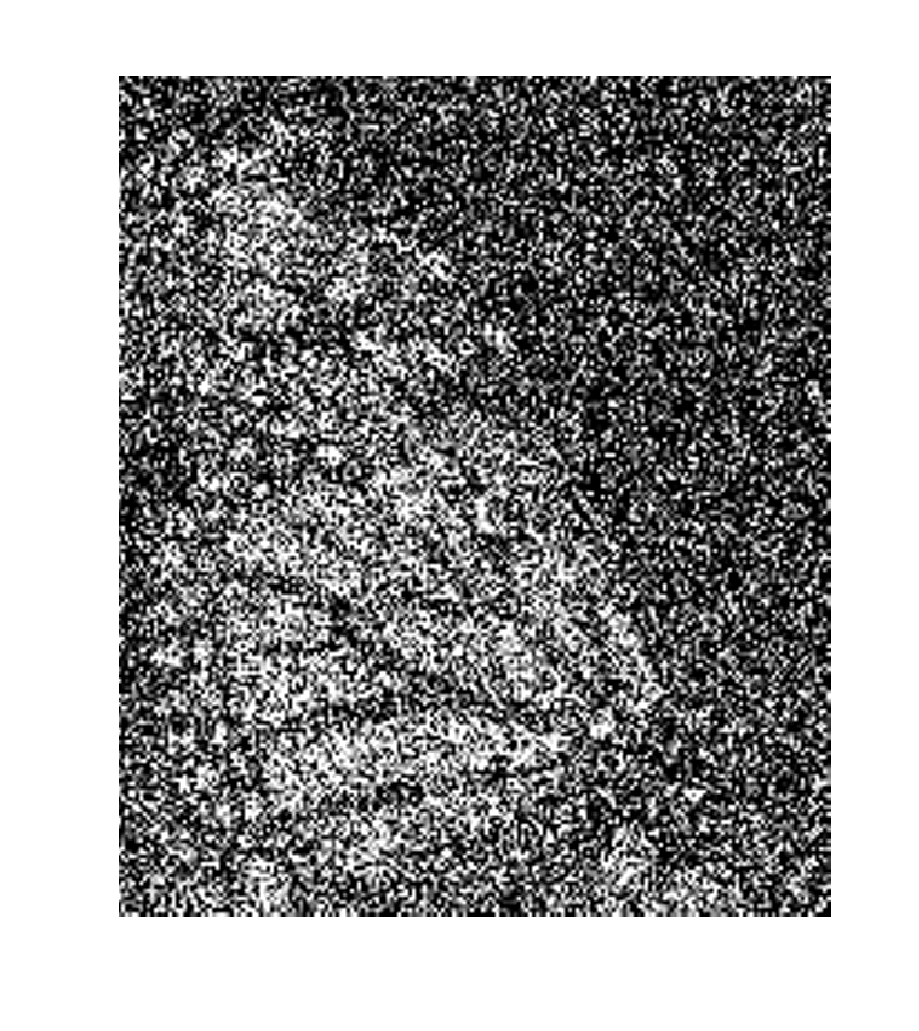}
    \includegraphics[clip, trim=0.8in 0.8in 0.8in 0.8in, width=\imsize\textwidth]{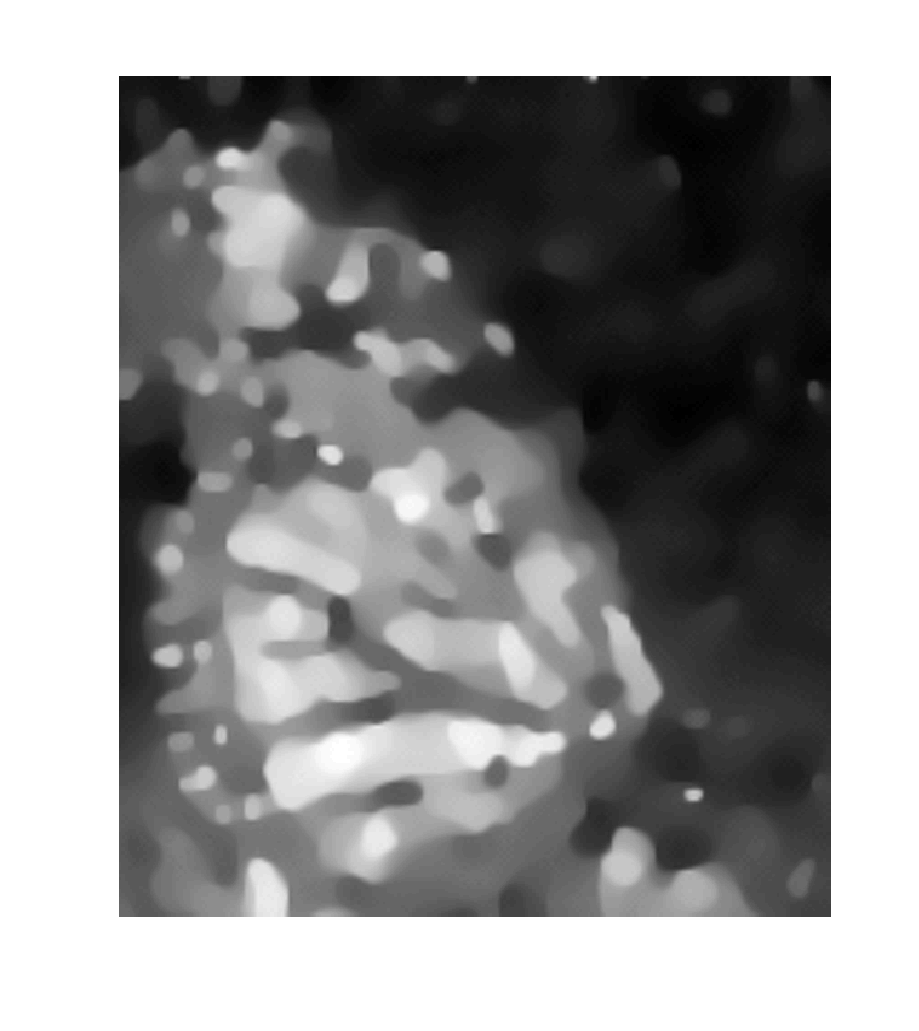}
    \includegraphics[clip, trim=0.8in 0.8in 0.8in 0.8in, width=\imsize\textwidth]{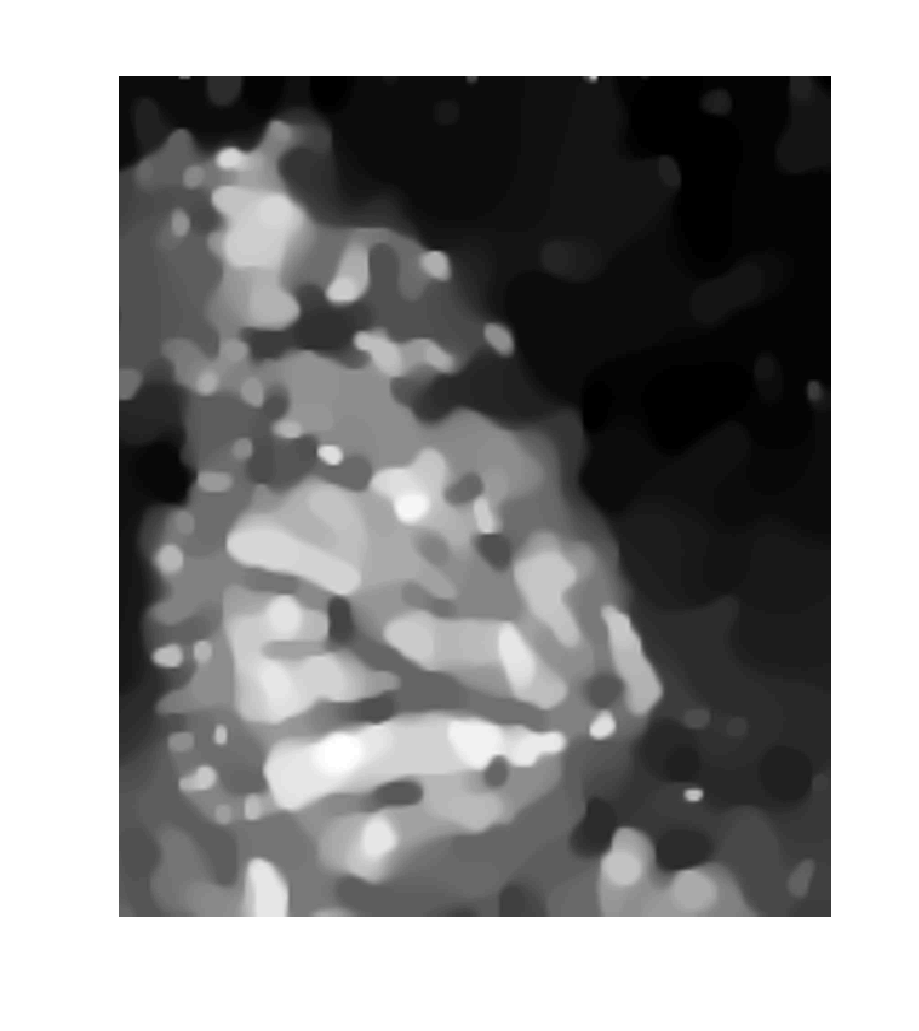}
    \begin{tabularx}{\textwidth}{XX XX}
      Noisy & SV-DDF (504 iterations) & TV (952 iterations) \\ 
      SSIM: 0.108 & SSIM: 0.549 & SSIM: 0.549 \\ 
      PSNR: 12.07 & PSNR: 18.37 & PSNR: 18.35 \\ 
      \midrule
    \end{tabularx}
    \includegraphics[clip, trim=0.8in 0.8in 0.8in 0.8in, width=\imsize\textwidth]{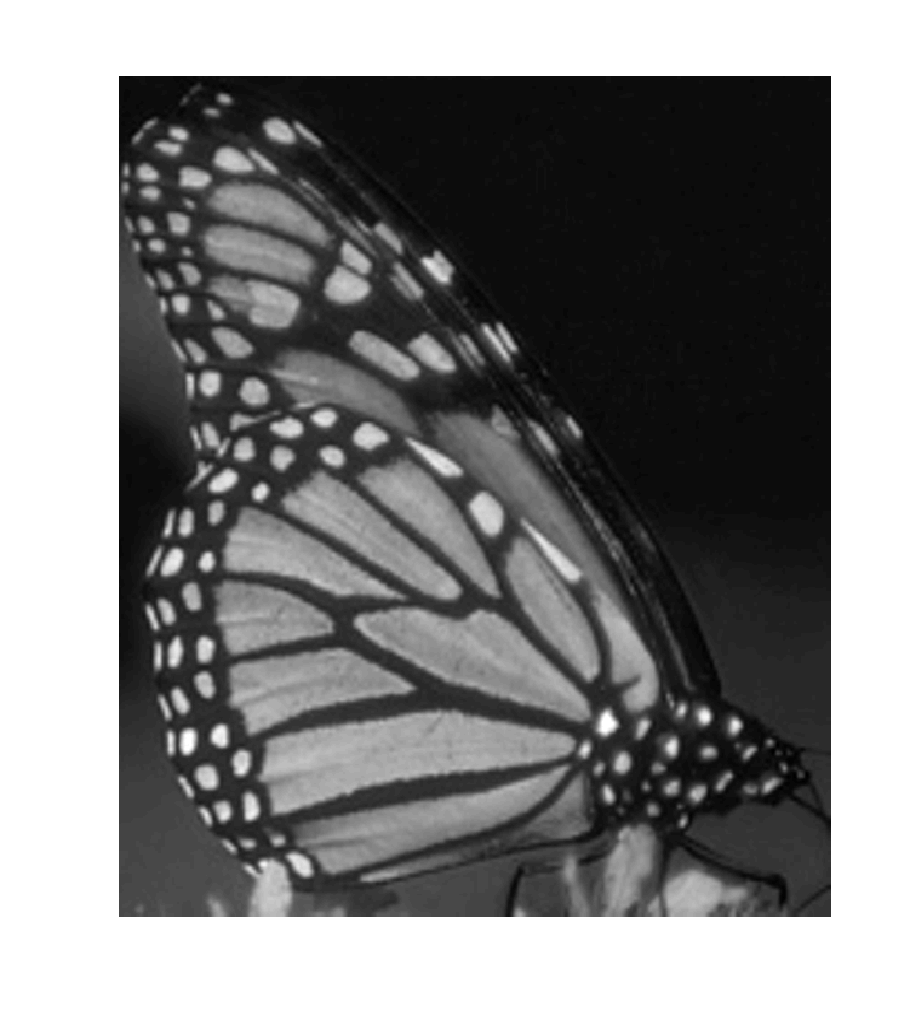}
    \includegraphics[clip, trim=0.8in 0.8in 0.8in 0.8in, width=\imsize\textwidth]{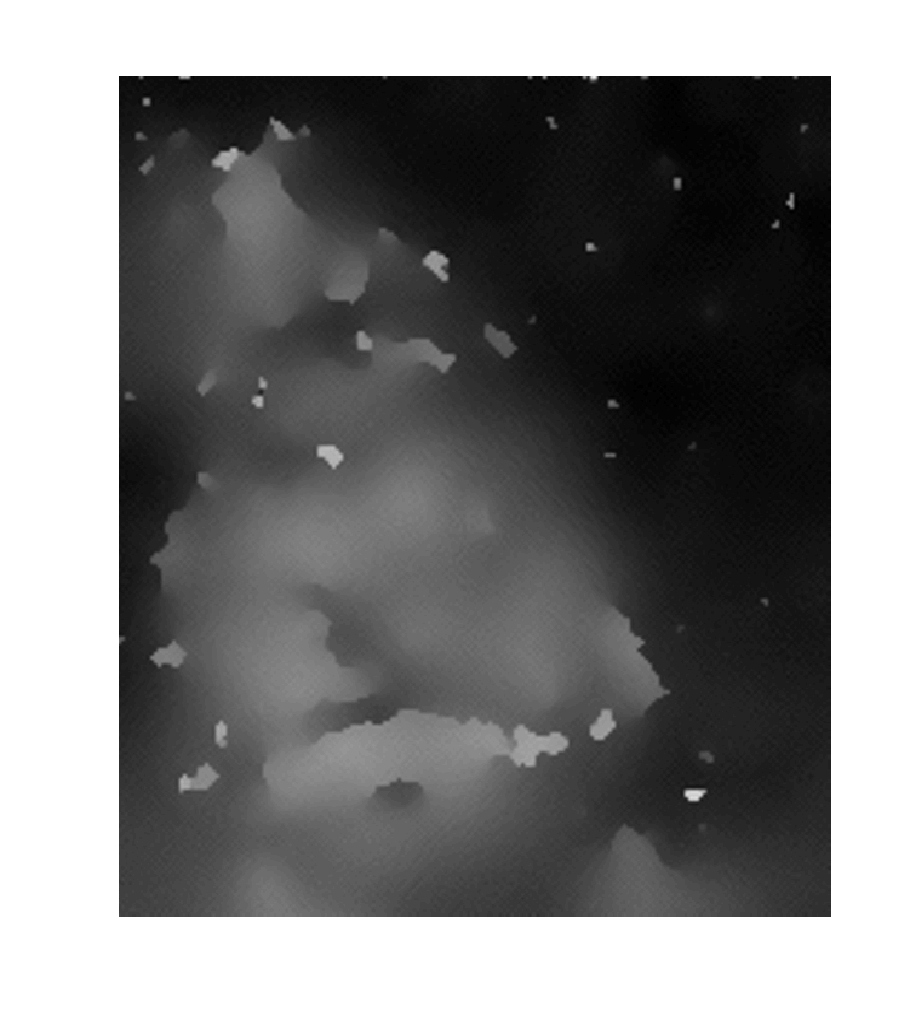}
    \includegraphics[clip, trim=0.8in 0.8in 0.8in 0.8in, width=\imsize\textwidth]{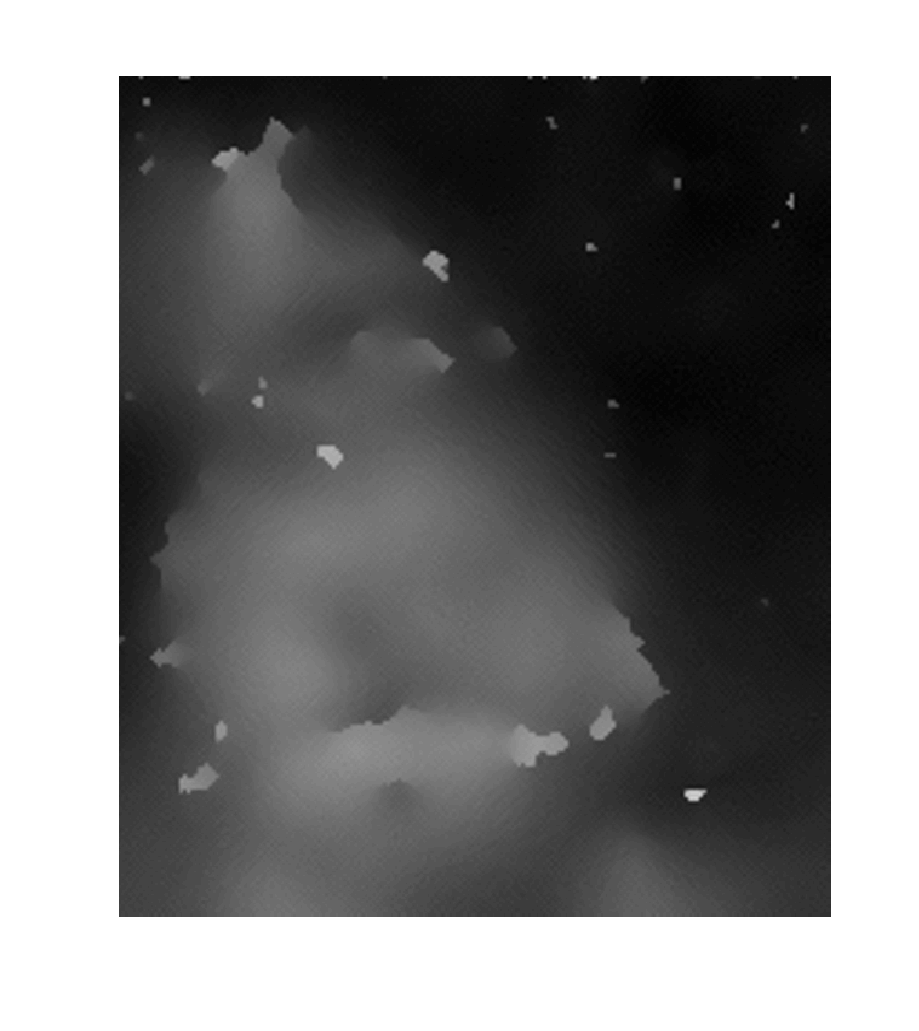}
    \begin{tabularx}{\textwidth}{XX XX}
      Original & MTele (853 iterations) & Tele (840 iterations) \\ 
      SSIM: & SSIM: 0.492 & SSIM: 0.494 \\ 
      PSNR: & PSNR: 16.48 & PSNR: 16.17 \\ 
      \bottomrule
    \end{tabularx}
    \includegraphics[clip, trim=0.3in 0.3in 0.2in 0.15in, width=\imsize\textwidth]{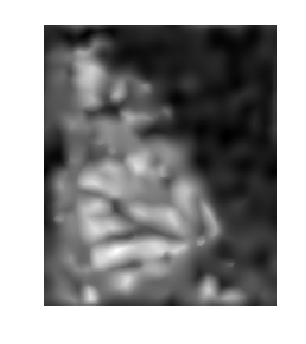}
    \qquad\qquad\quad
    \includegraphics[clip, trim=0.6in 0.65in 0.7in 0.4in, width=\imsize\textwidth]{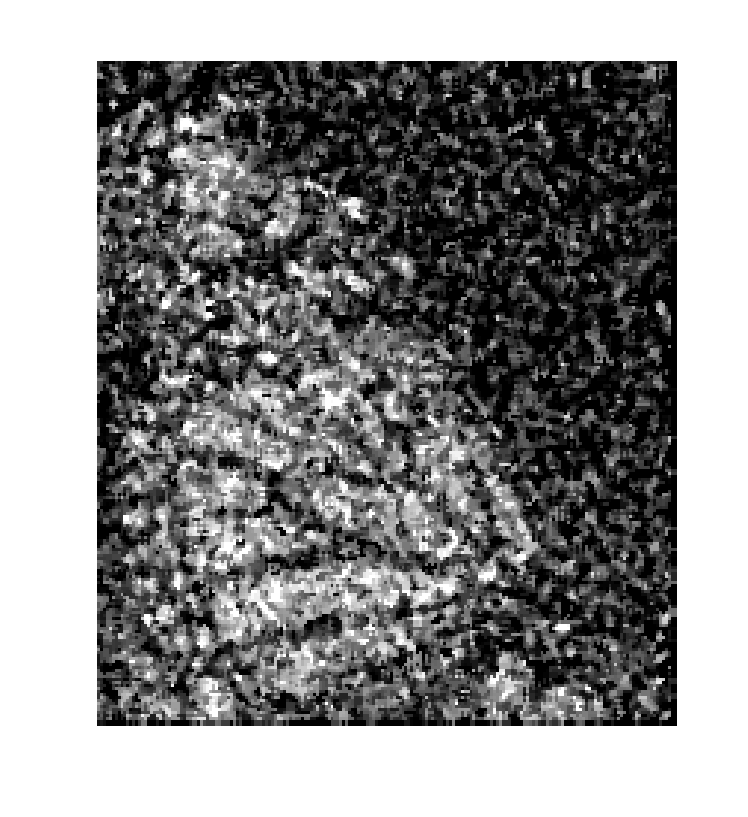} \\
    \begin{tabularx}{\textwidth}{XX XX}
      TGV (680 iterations) & MF &  \\ 
      SSIM: 0.407 & SSIM: 0.168 &  \\ 
      PSNR: 15.74 & PSNR: 14.02 &  \\ 
      \bottomrule
    \end{tabularx}
  \caption{Test picture (a): results for the methods SV-DDF, TV,  MTele, Tele, TGV and MF. }
  \label{fig:ssim}
  \vspace{-5mm}
\end{figure}

Next, we compare the qualities (the value of SSIM and PSNR) of the denoised images by the six mentioned algorithms. The results for two different types of noisy images are displayed in figures \ref{fig:ssim} and  \ref{fig:ssim2} respectively, where in each figure the test degraded image is given in the first picture, while the exact image and the denoised images are displayed in the last seven pictures. We see that for text image (a), the SV-DDF, as well as the TV method, gives the highest SSIM value 0.549, and also gives a smoother result than the other methods. However, the PSNR value of SV-DDF is slightly higher than the PSNR value of TV. Moreover, the TV method requires much more iterations to reach the result. For test image (b), the SV-DDF method presents the best results among all methods both regarding quality measures and iterations. However, as shown in figure \ref{fig:ssim2}, for image (b), like the TV method, our SV-DDF also exhibit the staircasing phenomenon. It is well known that the TGV methods do not lead to a staircasing effect, which motivates us to develop a new second order flow with TGV gradient, i.e.
\begin{equation}\label{GTVflow}
u_{tt} + \eta u_t  - \textmd{TGV}^2_\alpha (u) = 0, \quad u(x,0) = u_0(x), u_t(x,0) = 0  \textmd{~in~} \  \Omega,
\end{equation}
where the definition of $\textmd{TGV}^2_\alpha(\cdot)$, i.e. the total generalized bounded variation of order 2 with weight $\alpha$, can be found in (\cite{bredies2010total,Setzer2011}). Similar as SV-DDF in (\ref{symplectic}), one can propose a discretized version of second order flow (\ref{GTVflow}) (we denote it as SV-DDF-TGV). The result of SV-DDF-TGV for nosiy image (b) is displayed in the last picture of Figure \ref{fig:ssim2}, where we see that there is no staircase artifact for SV-DDF-TGV. However, the quality (both of value of SSIM and PSNR) of SV-DDF-TGV is much worse than the quality of the original SV-DDF method. Moreover, the well-posedness of the second order flow (\ref{GTVflow}), i.e. the existence, uniqueness, and regularization property of DF, are still open questions.

\renewcommand{\imsize}{0.25}
\begin{figure}[!tbh]
    \begin{tabularx}{\textwidth}{XX X}
      \toprule
    \end{tabularx}
    \includegraphics[clip, trim=0.8in 0.8in 0.8in 0.7in, width=\imsize\textwidth]{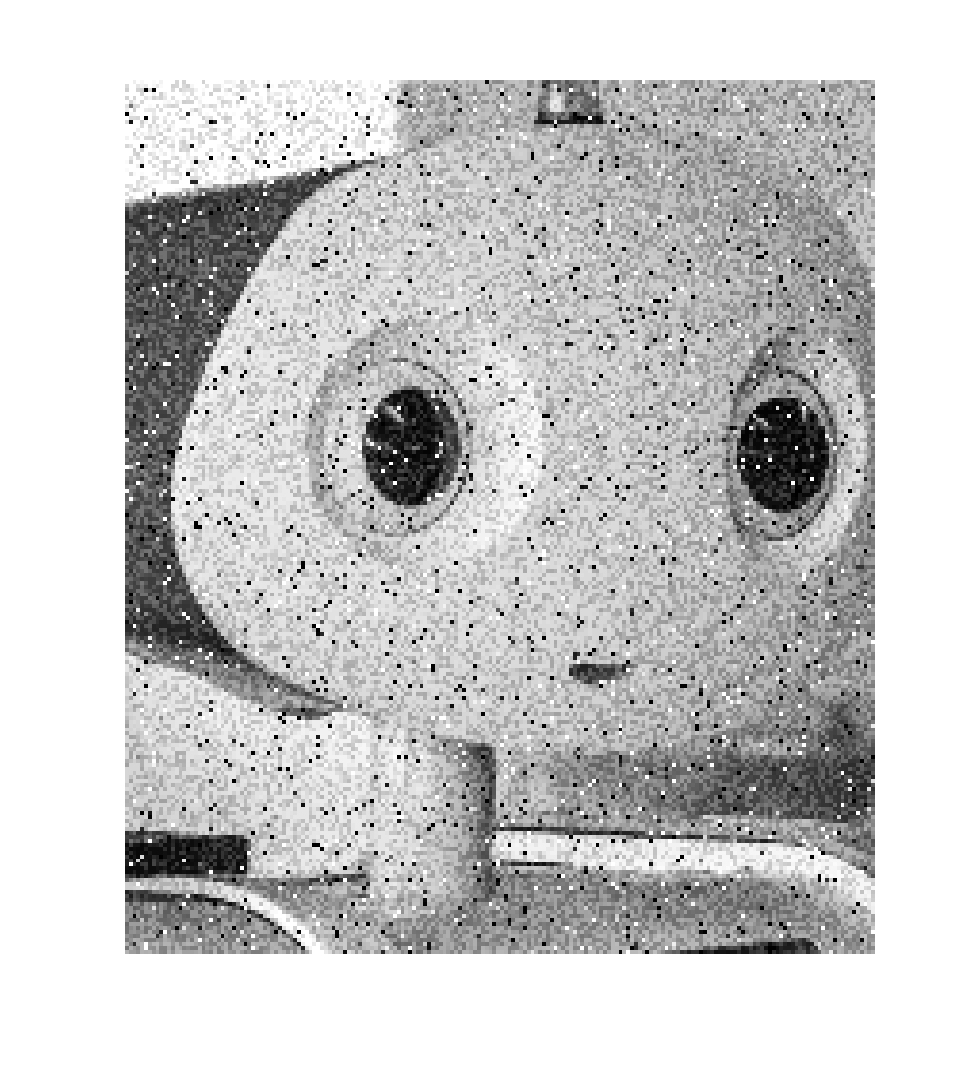}
    \includegraphics[clip, trim=0.8in 0.8in 0.8in 0.8in, width=\imsize\textwidth]{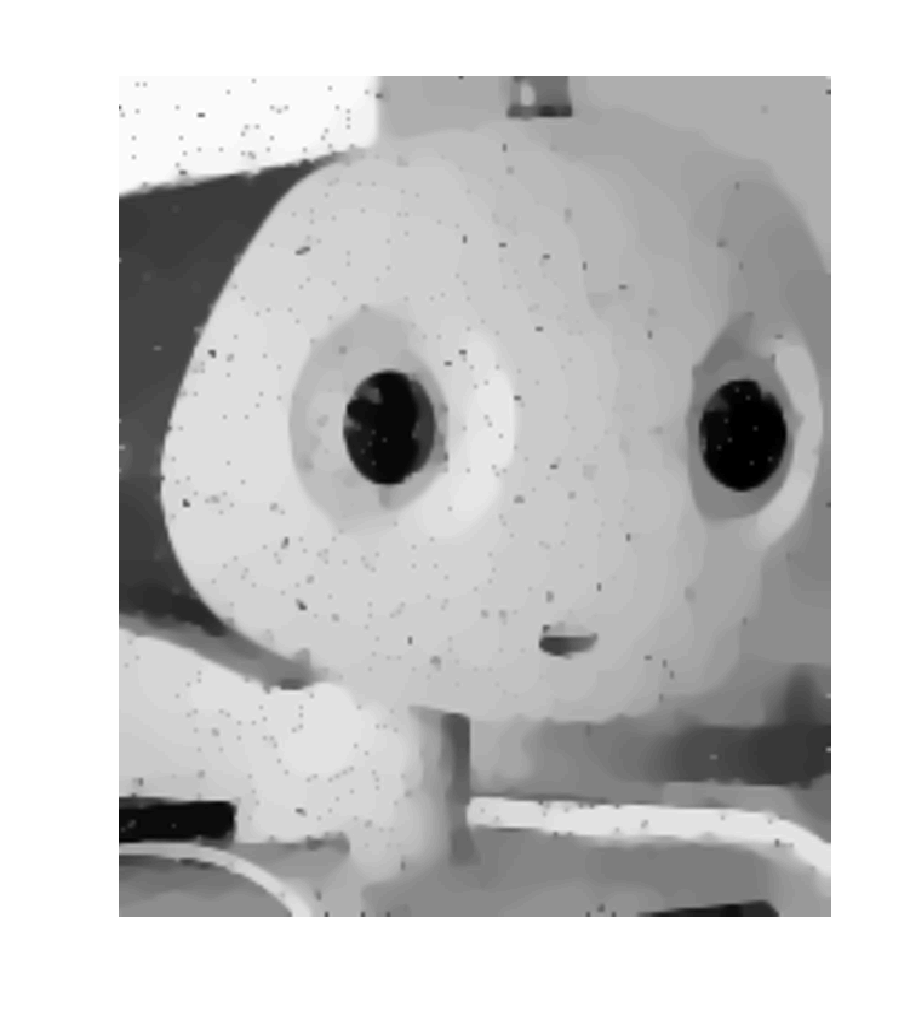}
    \includegraphics[clip, trim=0.8in 0.8in 0.8in 0.8in, width=\imsize\textwidth]{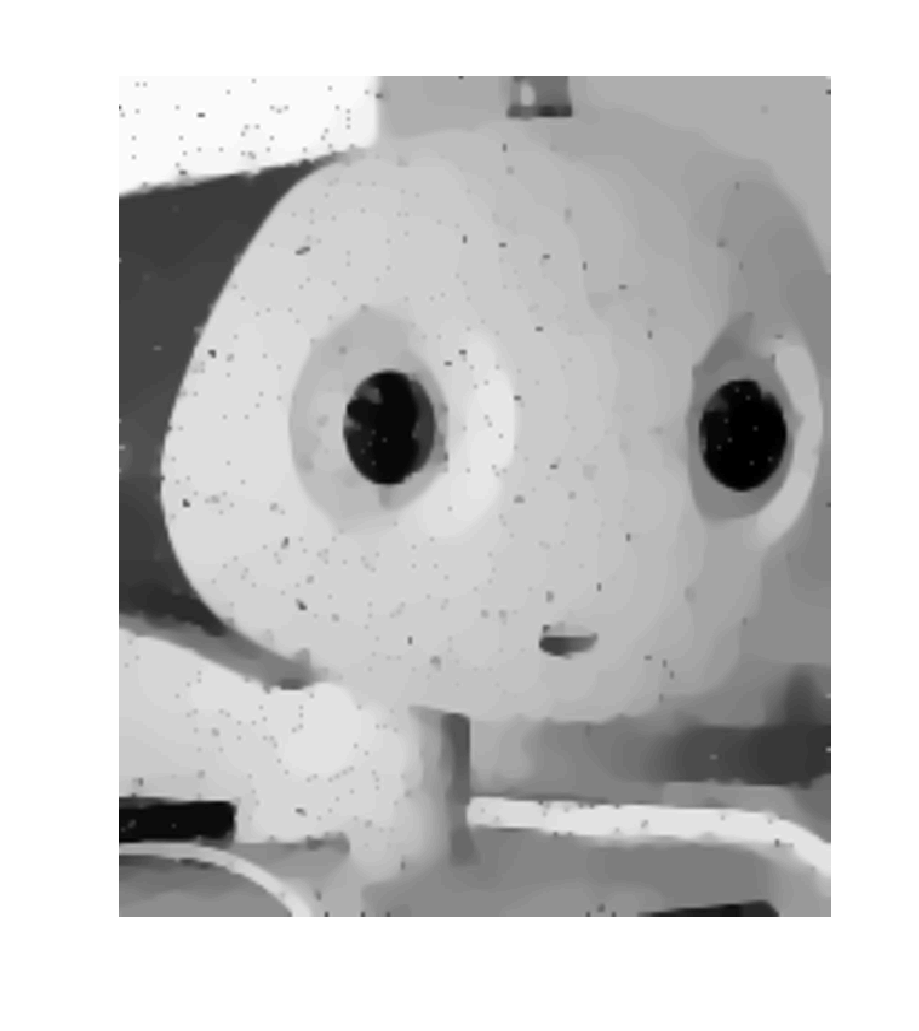}
    \begin{tabularx}{\textwidth}{XX XX}
      Noisy & SV-DDF (138 iterations) & TV (1726 iterations) \\ 
      SSIM: 0.367 & SSIM: 0.754 & SSIM: 0.742 \\ 
      PSNR: 18.81 & PSNR: 26.84 & PSNR: 26.20 \\ 
      \midrule
    \end{tabularx}
    \includegraphics[clip, trim=0.8in 0.8in 0.8in 0.7in, width=\imsize\textwidth]{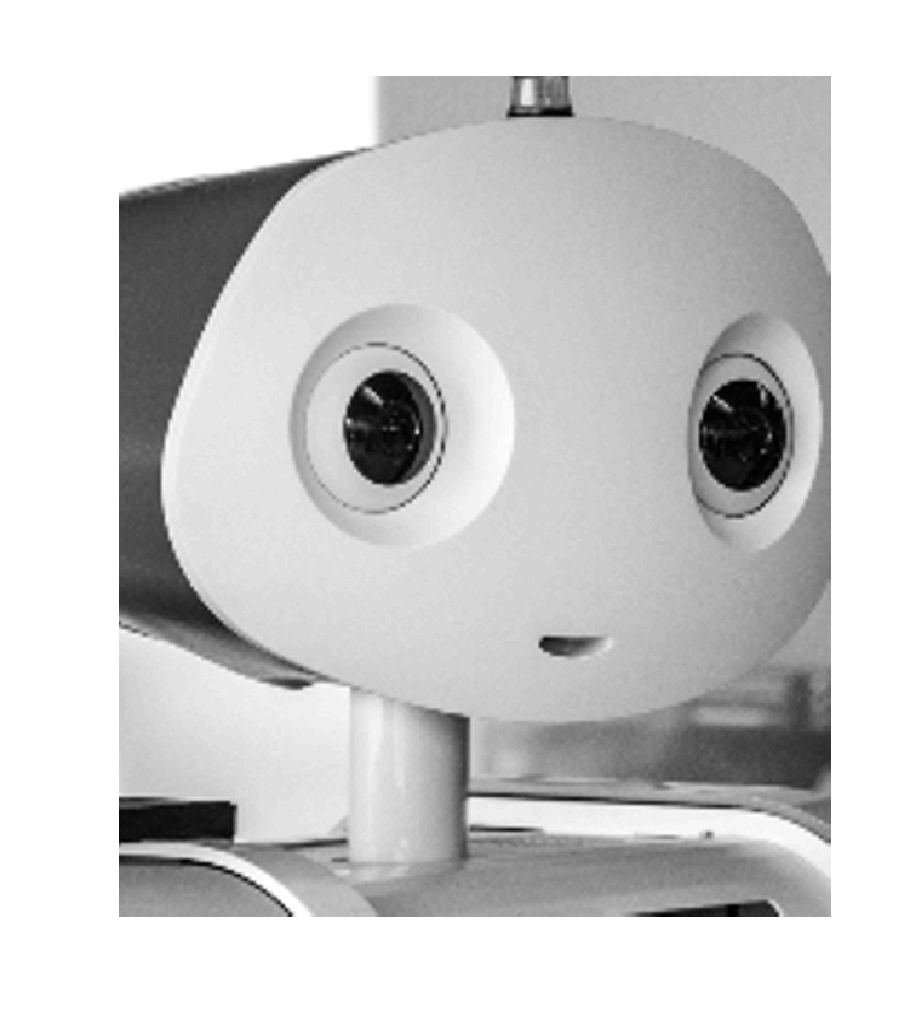}
    \includegraphics[clip, trim=0.8in 0.8in 0.8in 0.8in, width=\imsize\textwidth]{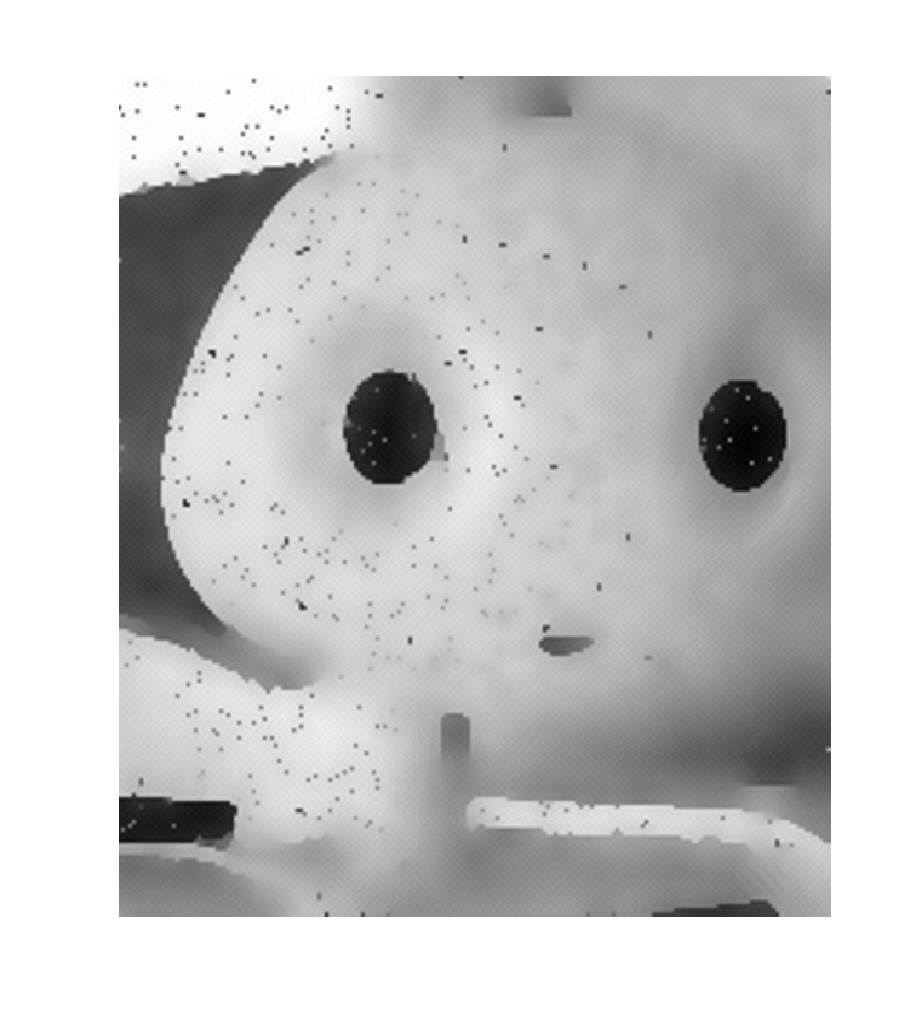}
    \includegraphics[clip, trim=0.8in 0.8in 0.8in 0.8in, width=\imsize\textwidth]{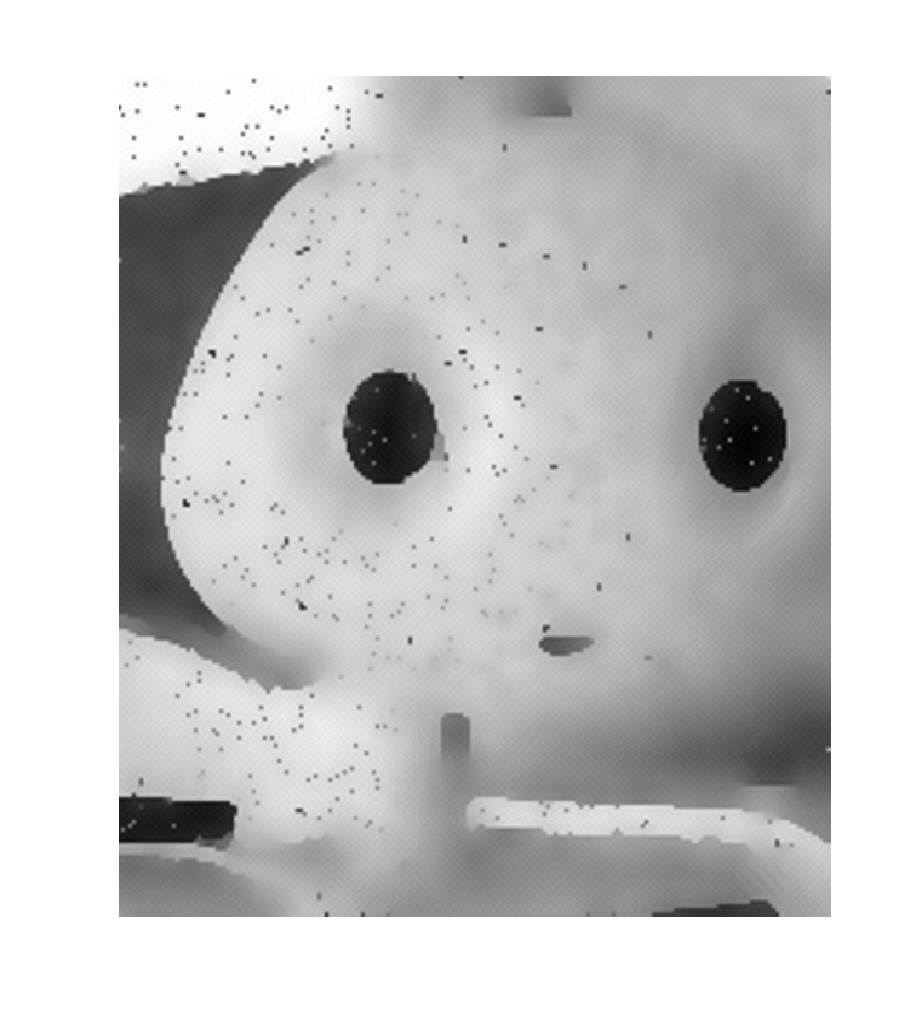}
    \begin{tabularx}{\textwidth}{XX XX}
      Original & MTele (799 iterations) & Tele (773 iterations) \\ 
      SSIM: & SSIM: 0.561 & SSIM: 0.559 \\ 
      PSNR: & PSNR: 24.17 & PSNR: 24.16 \\ 
      \bottomrule
    \end{tabularx}
    \includegraphics[clip, trim=0.6in 0.5in 0.4in 0.2in, width=\imsize\textwidth]{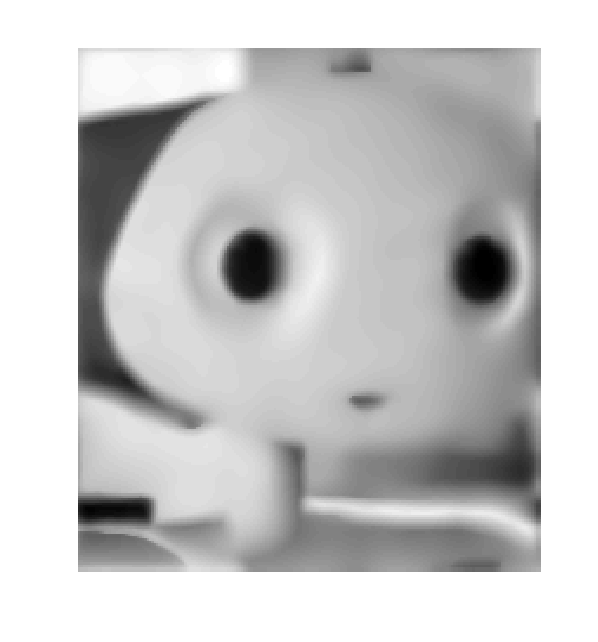}
    \includegraphics[clip, trim=0.6in 0.5in 0.4in 0.25in, width=\imsize\textwidth]{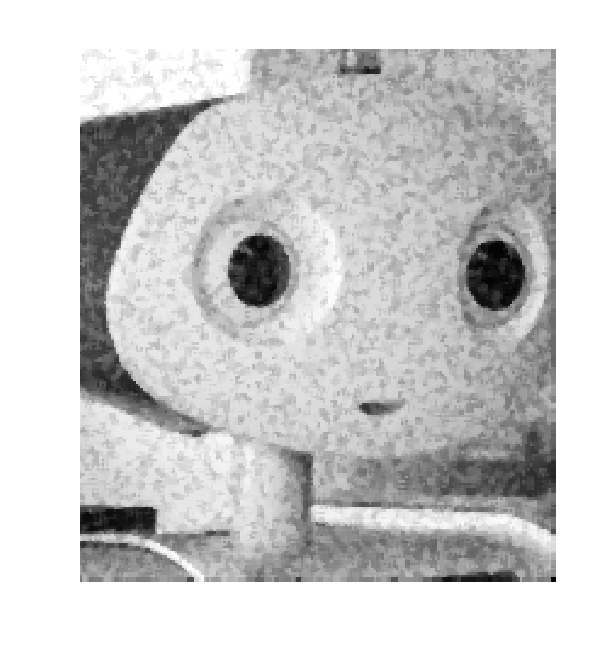}
    \includegraphics[clip, trim=0.3in 0.3in 0.2in 0.2in, width=\imsize\textwidth]{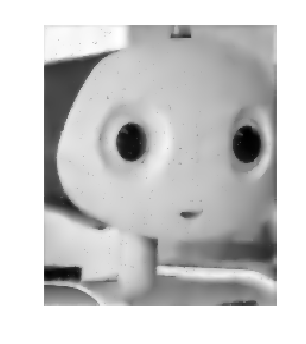}
    \begin{tabularx}{\textwidth}{XX XX}
      TGV (687 iterations) & MF & \setlength{\parindent}{-1em} SV-DDF-TGV (87 iterations) \\ 
      SSIM: 0.435 & SSIM: 0.282 & SSIM: 0.503 \\ 
      PSNR: 24.67 & PSNR: 23.82 & PSNR: 25.53 \\ 
      \bottomrule
    \end{tabularx}
  \caption{Test picture (b): results for the methods SV-DDF, TV,  MTele, Tele, TGV, MF and SV-DDF-TGV. }
  \label{fig:ssim2}
  \vspace{-5mm}
\end{figure}


\section{Conclusion.}
In this paper, we introduce a new image denoising model~-- the damped flow. The existence and uniqueness of the solution to the model are both proven under certain assumptions. For the numerical implementation, based on the St\"{o}rmer-Verlet method, a discrete damped flow, SV-DDF, is developed. The convergence of SV-DDF is discussed as well. A numerical algorithm with automatic stopping criterion is provided. A comparison with three existing methods show that the SV-DDF appears to be very competitive with respect to its image denoising capabilities and its acceleration affect. Obviously, beside in the $p$-Dirichlet energy denoising, the damped flow can also be used for solving a general non-linear optimization problem of the type (\ref{energyEu}), see e.g. the denoising model (\ref{GTVflow}). Finally, we emphasis that the aim of the paper is to introduce the SV-DDF method to image denoising. Since it is comparable to the conventional TV method, it is a promising approach which merits further theoretical and numerical development as well as more extensive comparison to state-of-the-art methods.

\section*{Acknowledgment}

We express our gratitude to the associate editor and two anonymous reviewers whose valuable comments and suggestions lead to an improvement of the manuscript.

The work of Y. Zhang is supported by the Alexander von Humboldt foundation through a postdoctoral researcher fellowship.


%


\end{document}